\newcommand{\hc}{\mbox{$\mathbb{C_\infty}$}}
\newcommand{\hDi}{\widehat{D}}
\newcommand{\hf}{\hat{f}}
\newcommand{\hg}{\hat{g}}
\newcommand{\si}{\sigma}
\newcommand{\Si}{\Sigma}
\newcommand{\ta}{\theta}
\newcommand{\ga}{\gamma}
\newcommand{\ph}{\varphi}
\newcommand{\om}{\omega}
\newcommand{\al}{\alpha}
\newcommand{\be}{\beta}
\newcommand{\iy}{\infty}
\newcommand{\bd}{\partial}
\newcommand{\mc}{\mathcal}
\newcommand{\ol}{\overline}
\newcommand{\inte}{\mathrm{Int}}
\newcommand{\sn}{\mathrm{sp}}
\newcommand{\ch}{\mathrm{CH}}
\newcommand{\len}{\mathrm{Len}}
\newcommand{\id}{\mathrm{id}}
\newcommand{\plane}{\mathbb{C}}
\newcommand{\sphere}{\mbox{$\mathbb{C_\infty}$}}
\newcommand{\nat}{\mathbb{N}}
\newcommand{\real}{\mathbb{R}}
\newcommand{\disk}{\mathbb{D}}
\newcommand{\complex}{\plane}
\newcommand{\bbd}{\mathbb{D}}
\newcommand{\ucirc}{\mathbb{T}}
\newcommand{\T}{\mathbb{T}}
\newcommand{\mh}{\mc{H}}
\newcommand{\0}{\emptyset}
\newcommand{\sse}{\subsection}
\newcommand{\orb}{\mathrm{orb\,}}
\newcommand{\lam}{\mathcal{L}}
\newcommand{\Ho}{\mathcal{H}}
\newcommand{\M}{\mathcal{M}}
\newcommand{\A}{\mathcal{A}}
\newcommand{\Lstar}{\lam^*}
\newcommand{\eq}[1]{\lfloor#1\rfloor}
\newcommand{\nin}{\not \in}
\newcommand{\ty}{\mathcal T}
\newcommand{\tp}{\mathcal {TP}}
\newtheorem{thm}{Theorem}[section]
\newtheorem{lem}[thm]{Lemma} 
\newtheorem{cor}[thm]{Corollary}
\newtheorem{prop}[thm]{Proposition}
\theoremstyle{definition}
\newtheorem{dfn}[thm]{Definition}
\newtheorem{rem}[thm]{Remark}
\begin{document}

\date{\today} 
\title[Non-degenerate quadratic laminations]
{Non-degenerate quadratic laminations}

\author[A.~Blokh]{Alexander Blokh}
\email[Alexander Blokh]{ablokh@math.uab.edu}
\thanks{The first author was partially
supported by NSF grant DMS-0456748}

\author[D.~K.~Childers]{Douglas K.~Childers}
\email[Douglas K.~Childers]{childers@math.uab.edu}

\author[J.~C.~Mayer]{John C.~Mayer}
\email[John C.~Mayer]{mayer@math.uab.edu}

\author[L.~Oversteegen]{Lex Oversteegen}
\email[Lex~Oversteegen]{overstee@math.uab.edu}
\thanks{The fourth author was partially  supported
by NSF grant DMS-0405774}

\address[Alexander~Blokh, Douglas K.~Childers, John C.~Mayer and Lex Oversteegen]
{Department of Mathematics\\ University of Alabama at Birmingham\\
Birmingham, AL 35294-1170}

\subjclass[2000]{Primary 37F10; Secondary 37B45, 37C25}

\keywords{Complex dynamics; laminations; Julia set}

\begin{abstract} We give a combinatorial criterion for a critical
diameter to be compatible with a non-degenerate quadratic
lamination.
\end{abstract}

\maketitle

\section{Introduction}\label{intro}

Laminations were introduced by Thurston \cite{thu85} as a tool for studying
complex polynomials, especially in degree $2$. Let $P:\hc\to\hc$ be a degree
$d$ polynomial with a connected Julia set $J_P$, with $\hc$ being the complex
sphere. Denote by $K_P$ the corresponding filled-in Julia set and by $\bbd$ the
closed unit disk. Let $\ta_d=z^d: \bbd\to \bbd$. There exists a conformal
isomorphism $\Psi:\inte \, \bbd\to \hc\setminus K_P$ with $\Psi\circ \ta=P\circ
\Psi$ \cite{hubbdoua85}. If $J_P$ is locally connected, then $\Psi$ extends to
a continuous function $\ol{\Psi}: \bbd\to \ol{\hc\setminus K_P}$ and $\ol{\Psi}
\circ\, \ta=P\circ \ol{\Psi}$. Identify the circle $\partial \bbd$ with $\ucirc
= \mathbb{R}/\mathbb{Z}$. Let $\si_d=\ta_d|_{\ucirc}$,
$\psi=\ol{\Psi}|_{\ucirc}$. Define an equivalence relation $\sim_P$ on $\ucirc$
by $x \sim_P y$ if and only if $\psi(x)=\psi(y)$. The equivalence $\sim_P$ is
called the \emph{($d$-invariant) lamination (generated by $P$)}. The quotient
space $\ucirc/\sim_P=J_{\sim_P}$ is homeomorphic to $J_P$ and the map
$f_{\sim_P}:J_{\sim_P}\to J_{\sim_P}$ \emph{induced} by $\si_d$ is
topologically conjugate to $P|_{J_P}$.

Kiwi \cite{kiwi97} extended this construction to \emph{all} polynomials $P$
with connected Julia set and no irrational neutral cycles for which he obtained
a $d$-invariant lamination $\sim_P$ on $\ucirc$ such that
$J_{\sim_{P}}=\ucirc/\sim_P$ is a locally connected continuum and $P|_{J_{P}}$
semi-conjugate to the induced map $f_{\sim_P}:J_{\sim_P}\to J_{\sim_P}$ by a
monotone map $m:J_P\to J_{\sim_{P}}$ (by \emph{monotone} we mean a map whose
point preimages are connected). The lamination $\sim_P$ generated by $P$
provides a combinatorial description of the dynamics of $P|_{J_{P}}$. One can
introduce laminations abstractly as equivalence relations on $\ucirc$ with
certain properties similar to those of laminations generated by polynomials; in
the case of such an abstract lamination $\sim$ we call $J_\sim=\ucirc/\sim$ a
\emph{topological Julia set} and denote the map \emph{induced} by $\si_d$ on
$J_\sim$ by $f_\sim$. Given a set $A\subset \ucirc\subset \hc$, denote by
$\ch(A)$ the convex hull of $A$ in $\hc$. For an equivalence $\sim$ on $\ucirc$
its graph $G(\sim)\subset \ucirc\times \ucirc$ is the set of all pairs $\{x,
y\}$ such that $x\sim y$; an equivalence is \emph{closed} if its graph is
closed (then all its classes are closed too). Two closed sets $A, B\subset
\ucirc$ are said to be \emph{unlinked} if $\ch(A) \cap \ch(B) = \0$.

\begin{dfn}\label{DEF Lam}
A closed equivalence relation $\sim$ on $\ucirc$ with nowhere-dense classes is
called a \emph{lamination} if its classes are pairwise unlinked.
\end{dfn}

Abusing the language we call the equivalence relation on $\ucirc$ which
identifies all points the \emph{degenerate lamination}. The classes of
equivalence of $\sim$ will be called \emph{$\sim$-classes} (or simply
\emph{classes}).

For points $x,y \in \ucirc$, we use $[x,y], (x,y), \dots$ to denote the
non-empty closed (open, $\dots$) arc running counterclockwise in $\ucirc$ from
$x$ to $y$ (thus, $(x,x)=\ucirc\setminus \{x\}$). For closed $A\subset \ucirc$,
say that $\si_d|A$ is \emph{consecutive preserving} \cite{kiwi97} if for every
component $(s,t)$ of $\ucirc \setminus A$, the interval $(\si_d(s),\si_d(t))$
is a component of $\ucirc \setminus \si_d(A)$.  If, moreover, $\si_d(A)=A$ for
a closed set $A$, we say $A$ is {\em rotational}.

\begin{dfn}\label{DEF InvLam}
The lamination $\sim$ is \emph{$d$-invariant} if for every class $C$ the
set $\si_d(C)$ is a class. A class $C$ is \emph{critical} if
$\si_d|_C$ is not injective.
\end{dfn}

\begin{rem}\label{notconpre} It follows that the preimage of a class is a union of classes.
From now on we consider the \emph{quadratic}
case $d=2$; by $\si$ we mean $\si_2$ and by \emph{invariant} we mean
$2$-invariant. In the literature it is also required that for an
invariant lamination and each class $C$, the map $\si|_C$ is
consecutive preserving. We show in Lemma~\ref{REM *} that this
assumption is redundant in case $d=2$.
\end{rem}

Thurston's original approach was different from that described
above. He did not consider equivalences on $\ucirc$. Rather, he
considered closed families of chords in $\ucirc$ having specific
properties. Following \cite{thu85}, call a chord $\ol{ab}$ joining
two points $a, b\in \ucirc$ a \emph{leaf} (we allow for the
possibility that $a=b$ in which case the leaf is degenerate).

\begin{dfn}\label{geolam} A \emph{geometric lamination} \cite{thu85}
$\lam$ is a compact set of leaves such that any two distinct leaves from $\lam$
meet at most in an endpoint of both of them. A geometric lamination $\lam$ is
said to be \emph{invariant} if for each $\ell=\ol{cd}\in\lam$,
$\ol{\si(c)\si(d)}$ is a leaf in $\lam$ and there exist two disjoint leaves
$\ell'=\ol{c'd'}$ and $\ell''=\ol{c''d''}$ in $\lam$ such that
$\si(c')=\si(c'')=c$ and $\si(d')=\si(d'')=d$.
\end{dfn}

Geometric laminations serve as a tool for studying non-locally connected
Julia sets \cite{OB}. An advantage of considering them is that a geometric
lamination can be constructed if only one (but
an appropriately chosen one) of its leaves is known. Given a geometric lamination
$\lam$, we denote by $\lam^*$ the union the circle $\ucirc$ and of all the leaves of $\lam$. By a
\emph{gap} $G$ of $\lam$ we mean the closure of a component of $\disk\setminus\lam^*$.

The construction of a geometric lamination from a single leaf is due to
Thurston and is described in the next section. An important case is when a
\emph{critical leaf (diameter)} is given and the entire geometric lamination to
which it belongs needs to be recovered (given a point $\ta\in \ucirc$ we set
$\ta'=\ta+1/2$ and denote the corresponding critical diameter $\ol{\ta\ta'}$
by $\ell_\ta$). In a lot of cases this recovery can be done
completely. Still, the problem is to relate \emph{geometric} laminations (or,
alternatively, critical diameters which determine them) and their
\emph{equivalence} counterparts. More precisely, a lamination $\sim$ is said to
be \emph{compatible with a critical diameter $\ell_\ta$} if $\ta\sim \ta'$. We solve
the following problem in the paper.

\smallskip

\noindent\textbf{Main Problem.} \emph{Given a critical diameter $\ell_\ta$,
does there exist a non-degenerate lamination compatible with it?}

\smallskip

In one direction the connection between laminations and geometric
laminations is not hard.

\begin{dfn}\label{assoclam} Let $\sim$ be a lamination.  The \emph{
geometric lamination $\lam_\sim$} is formed as follows:
take for each $\sim$-class $A$ its convex hull $\ch(A)$. Take all chords,
including possibly degenerate ones, in the boundary of $\ch(A)$ to be leaves of
$\lam_\sim$. The family of so-constructed leaves, degenerate and otherwise,
over all $\sim$-classes is $\lam_\sim$.
\end{dfn}

By our assumption the boundary of each gap is the union of  chords and points; the
non-degenerate chords become leaves while points become degenerate
leaves. For example, if the class $A$ of $\sim$ is a Cantor set, then all points of $A$ which are
not the endpoints of complementary arcs to $A$ will be degenerate leaves. Also,
classes which consist of two points become leaves, and classes consisting of
one point, become degenerate leaves. In this way we construct a geometric
lamination (so that its leaves can only meet on the unit circle) such that any
two points connected with a leaf are equivalent in the sense of $\sim$.   It is
left to the reader to complete the proof of the following proposition.

\begin{prop}\label{lamsim} The collection of leaves $\lam_\sim$ associated to the
lamination $\sim$ is a geometric lamination. \end{prop}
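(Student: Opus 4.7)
The plan is to verify the two defining properties of a geometric lamination for $\lam_\sim$: first, that any two distinct leaves of $\lam_\sim$ meet at most in a common endpoint, and second, that $\lam_\sim$ is a compact family of leaves.

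For the non-crossing property I would split into two cases based on whether the two leaves lie in the boundary of the same $\ch(A)$. If both $\ell_1$ and $\ell_2$ come from a single $\sim$-class $A$, convexity of $\ch(A)$ implies that two distinct boundary chords share at most a vertex, which lies on $\ucirc$ and is therefore an endpoint of both leaves. If $\ell_1, \ell_2$ come from distinct $\sim$-classes $A \neq B$, the unlinkedness assumption yields $\ch(A) \cap \ch(B) = \0$, so the leaves are disjoint.

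For compactness the strategy is to show $\lam_\sim$ is closed in the space of chords of $\bbd$. Take a convergent sequence $\ell_n = \ol{a_n b_n}$ in $\lam_\sim$ with $\ell_n \to \ell = \ol{ab}$, so $a_n \to a$ and $b_n \to b$, and let $A_n$ denote the $\sim$-class containing $a_n, b_n$. Since $a_n \sim b_n$ for every $n$ and $\sim$ is closed, $a \sim b$; let $A$ be the common $\sim$-class of $a$ and $b$. The degenerate case $a = b$ is immediate because $\{a\}$ is a degenerate chord contained in $\partial \ch(A)$ whenever $a \in A$. When $a \neq b$ the goal is to show $\ol{ab}$ is a boundary chord of $\ch(A)$, and I would split into two subcases. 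If infinitely many $A_n$ equal $A$, pass to that subsequence and fix an orientation so that each $(a_n, b_n)$ is a complementary arc of $A$; then every $x \in (a, b)$ eventually lies in $(a_n, b_n)$ and hence outside $A$, so $(a, b) \cap A = \0$ and $\ol{ab}$ is a boundary chord. If eventually $A_n \neq A$, unlinkedness forces each $A_n$ into a single complementary arc $(s_n, t_n)$ of $A$; after passing to a subsequence with $s_n \to s$ and $t_n \to t$ (both in $A$ by closedness), the limit arc $(s, t)$ is a complementary arc of $A$, and $a, b \in [s, t] \cap A = \{s, t\}$ together with $a \neq b$ forces $\{a, b\} = \{s, t\}$, so $\ol{ab}$ closes the complementary arc $(s, t)$.

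The main obstacle I anticipate is the claim used in the second subcase: any $\sim$-class $A_n$ distinct from and unlinked with $A$ must lie in a single complementary arc of $A$. I would prove this by contradiction. If $A_n$ contained points $u \in I_\alpha$ and $v \in I_\beta$ in two distinct complementary arcs of $A$, then $A$ would have points on each of the two open arcs of $\ucirc$ determined by $u$ and $v$; choosing $a_1, a_2 \in A$ on opposite sides, the four points $u, a_1, v, a_2$ appear in cyclic order on $\ucirc$, so the chords $\ol{uv} \subset \ch(A_n)$ and $\ol{a_1 a_2} \subset \ch(A)$ cross, contradicting $\ch(A_n) \cap \ch(A) = \0$.
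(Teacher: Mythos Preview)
Your proof is correct and complete in all essentials. The paper does not actually supply a proof of this proposition; it explicitly leaves it to the reader after sketching the construction, so there is no alternative approach to compare against. Your two-part verification (non-crossing via convexity and unlinkedness, closedness via the closedness of $\sim$) is exactly the natural argument.

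One small point worth tightening in the second subcase of the closedness argument: you pass to a subsequence with $s_n\to s$, $t_n\to t$ and then assert that $(s,t)$ is a complementary arc of $A$. This is not automatic from $s,t\in A$ alone. The cleanest way to secure it is to observe that since $a\ne b$, the arcs $[s_n,t_n]\supset\{a_n,b_n\}$ have length bounded below by some $\varepsilon>0$ for large $n$; but $A$ has only finitely many complementary arcs of length at least $\varepsilon$, so after passing to a further subsequence the arcs $(s_n,t_n)$ are all equal to a single complementary arc $(s,t)$. Then $a,b\in[s,t]\cap A=\{s,t\}$ and $a\ne b$ finish the argument exactly as you wrote.
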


To solve the Main Problem we proceed in the opposite direction. Given a
critical leaf, we construct the corresponding geometric lamination as in
\cite{thu85} to which we then associate a lamination whose non-degeneracy we
study. The paper is organized as follows. First we establish useful properties
of invariant laminations in Section~\ref{invlam}. In Section~\ref{geola} we
discuss properties of geometric laminations as well as ways of constructing
them. In Section~\ref{nonper} we show that if the point $\si(\ta)$ is not
periodic then there exists a lamination compatible with $\ell_\ta$.

The main case when $\si(\ta)$ is periodic is considered in Section~\ref{per}
and Section~\ref{relam}. In Section~\ref{per}, we develop the notion of
renormalization of a ``quadratic" map on a dendrite, analogous to the notion of
renormalization of a unimodal map on an interval.   We apply this to
laminations in the subsequent section. In Section~\ref{relam} we describe two
basic cases, {\em basic rotational} and {\em basic non-rotational}, and use
them to develop an algorithmic verification test of whether a non-degenerate
lamination compatible with $\ell_\ta$ exists with three possible outcomes: (1)
basic rotational, hence degenerate; (2) basic non-rotational, hence
non-degenerate, or (3) inconclusive.  In the inconclusive case, we develop a
version of renormalization of invariant laminations, in effect reducing the
period of $\si(\ta)$, and apply the test again. Ultimately, for $\si(\ta)$
periodic, the algorithm terminates in case (1) or (2).  We then use the results
of Section~\ref{per} to conclude that the original lamination is, respectively,
degenerate or non-degenerate.

We extract from this verification algorithm a combinatorial description of a
{\em block structure} for a periodic orbit which fully describes those periodic
angles $\ta$ such that $\ell_\ta$ is compatible with a non-degenerate
lamination.

\smallskip

\noindent\textbf{Acknowledgements} This paper is related to discussions in the
Lamination Seminar at UAB in 2004-2006. We want to thank participants of this
seminar for many discussions which have illuminated our understanding of
laminations.

\smallskip

\section{Invariant Laminations}\label{invlam}

\sse{Fundamental Properties} In this section we study fundamental
properties of laminations which will be used in subsequent proofs.  For a set
$A\subset \ucirc$ let $A'$ be the image of $A$ under rotation by $1/2$.

\begin{lem}\label{REM *} Let $\sim$ be an invariant lamination. For a class
$C$ either {\rm (a)} $C$ is critical, $C'=C$ and $\si^{-1} \circ \si(C) = C$,
or {\rm (b)} $C$ is not critical, $C'$ is another class with $C'\cap C=\0$, and
$\si^{-1} \circ \si(C) = C \cup C'$. If there exists a critical class
$C$ then $C$ is unique and $\ch(C)$ contains a diameter. Also, if $A$ is
a class then $\si|_A$ is consecutive preserving.
\end{lem}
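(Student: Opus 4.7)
The plan is to exploit the relation $\si^{-1}(\si(C))=C\cup C'$ (valid for any $C\subset\ucirc$ since the two $\si$-preimages of a point are antipodal) together with the fact that, by invariance and Remark~\ref{notconpre}, $\si^{-1}(\si(C))$ is a union of $\sim$-classes. First I would record the chain of equivalences: $C$ is critical iff $C$ contains an antipodal pair $\{y,y+1/2\}$, iff $C\cap C'\neq\0$, iff $\ch(C)$ contains a diameter, iff the origin lies in $\ch(C)$; the forward direction of the last step uses that any point of $\ch(C)\cap\ucirc$ is an extreme point of $\ch(C)$ and therefore belongs to the closed set $C$. Uniqueness of the critical class is then immediate, since two critical classes would have convex hulls both containing the origin, violating unlinkedness.

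The core of the argument, and the step I expect to be the main obstacle, is the following \emph{sibling identity}: if $D$ is any $\sim$-class with $\si(D)=\si(C)$ and $D\cap C=\0$, then $D=C'$ as sets (so in particular $C'$ is a $\sim$-class). Indeed, $\si^{-1}(\si(D))=D\cup D'$ and, by hypothesis, this equals $\si^{-1}(\si(C))=C\cup C'$. Disjointness $D\cap C=\0$ forces $D\subset C'$, and hence $D'\subset C$; any element of $C$ lies in $D\cup D'$ but not in $D$, so $C\subset D'\subset C$, yielding $D'=C$ and symmetrically $D=C'$.

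From the sibling identity the dichotomy drops out. If $C$ is critical and $C\neq C'$, observe that $C\setminus C'$ and $C'\setminus C$ are swapped by the $1/2$-rotation, so both are nonempty; pick $z\in C\setminus C'$ and let $D$ be the $\sim$-class of $z+1/2$. Then $D\cap C=\0$ (because $z+1/2\notin C$) and $\si(D)=\si(C)$ (because $\si(z+1/2)=\si(z)\in\si(C)$ and classes are disjoint). The sibling identity then gives $C'=D$, so $C'$ is a class; but then $C$ and $C'$ would have to be disjoint classes, contradicting $C\cap C'\neq\0$. So $C=C'$ and $\si^{-1}(\si(C))=C$, giving (a). If instead $C$ is not critical, then $C\cap C'=\0$; picking any $y\in C'$ with $\sim$-class $D$ and applying the identity yields $D=C'$, giving (b).

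Finally, for consecutive preservation I would use the dichotomy. In case (b), the disjointness $\ch(A)\cap\ch(A')=\0$, combined with $\ch(A')$ being the $1/2$-rotation of $\ch(A)$, forces $\ch(A)$ into an open half-disk, so $A$ lies in an open semicircle on which $\si$ is injective and orientation-preserving; the short complementary arcs of $A$ then map bijectively to complementary arcs of $\si(A)$, and the unique long complementary arc maps via $(s,t)\mapsto(\si(s),\si(t))$ to the long complementary arc of $\si(A)$. In case (a), $A=A'$ is invariant under the $1/2$-rotation and every complementary arc of $A$ has length at most $1/2$; such arcs come in antipodal pairs, each pair collapsed by $\si$ onto a single arc $(\si(s),\si(t))$, and any point of $\si(A)$ in the interior of this arc would have a $\si$-preimage inside one of the two paired complementary arcs of $A$, which are by construction disjoint from $A$. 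Hence $(\si(s),\si(t))$ is the required complementary arc of $\si(A)$, completing the proof.
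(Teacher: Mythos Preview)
Your proof is correct. The overall strategy matches the paper's: both hinge on the observation that $\si^{-1}(\si(C))=C\cup C'$ and that this preimage is a union of classes, and both deduce uniqueness of the critical class from the fact that two diameters must meet. The organization differs in two respects worth noting. First, you isolate the ``sibling identity'' as a reusable lemma and then derive both (a) and (b) from it, whereas the paper argues each case directly (for (a) it does an element-chase showing $b\in C\Rightarrow b'\in C$, and for (b) it simply asserts that $C'$ is a class---a step your sibling identity makes explicit). Second, for consecutive preservation the paper invokes the result from \cite{BS} that any set contained in a closed semicircle is mapped consecutive-preservingly by~$\si$, and then observes that in case~(b) the class lies in an open semicircle; you instead give a self-contained argument covering both cases, including the symmetric case $A=A'$ via the antipodal pairing of complementary arcs. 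Your treatment is thus slightly more detailed and self-contained, but the underlying ideas are the same.
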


\begin{proof} By \cite{BS} a subset
$A\subset\ucirc$ is mapped consecutive-preserving under $\sigma$
provided $A$ is contained in a closed semicircle. In that case,
$\sigma|_A$ is one-to-one except possibly at the endpoints of the
semicircle. To prove {\rm (a)}, suppose $C$ is a critical class.
Then there are points $c, c'\in C$. Suppose that there is yet
another point $b\in C$. Then $\si(C) \ne \si(b)\in \si(C)$ where by
invariance $\si(C)$ is a class. Denote by $B$ the class containing
$b'$. Then the class $\si(B)$ is non-disjoint from the class
$\si(C)$, hence $\si(C)=\si(B)$. Since $\si(c)\in \si(C)$ then $B$
must contain either $c$ or $c'$, thus $B$ must coincide with $C$ and
$b'\in C$. Hence $C=C'$ and $\si^{-1} \circ \si(C) = C$. Now, if
there were another critical class, it would not be unlinked with $C$
because, by the above, it would have to contain a diameter, and
diameters meet.  It follows that a critical class $C$ is unique. To
prove {\rm (b)}, suppose $C$ is not critical. Then by definition,
$C\cap C'=\0$, and $C'$ is a class. So, $C$ is unlinked with $C'$,
and since $C$ and $C'$ are both closed, they are contained in
opposite open semicircles. Clearly, $\si(C)=\si(C')$ and $\si^{-1}
\circ \si(C) = C \cup C'$. It follows from this that $\si$ is
consecutive-preserving on a class.
\end{proof}

If $\sim$ is an invariant non-degenerate lamination then, by the expanding
properties of $\si$, no $\sim$-class has interior in $\ucirc$. The quotient
space $\ucirc/\sim$ can be embedded in $\sphere$ as a locally connected
continuum $J_{\sim}$. Indeed, we can start by considering the sphere with the
unit disk. Then we can consider the equivalence on the sphere which extends
$\sim$ by identifying points of every convex hull of a $\sim$-class and not
identifying any points outside the unit disk. Denote the corresponding
identifying factor map of the sphere onto the sphere by $\pi$. The radial rays
which connect points of the unit circle to infinity under $\pi$ are mapped into
the so-called \emph{topological external rays}. One can consider the map
$z\mapsto z^n$ outside the open unit disk and then
transport this map by $\pi$ onto the sphere $\pi(S^2)$. This induces the map
$\hf_\sim$ of the $\pi$-image of complement of the open unit disk onto itself
which extends the induced map $f_\sim$. By
Kiwi \cite{kiwi97} the map $\hf_\sim$ extends to a
branched covering map $\hf_\sim: \sphere \to \sphere$ of degree $2$ \emph{of
the entire sphere} whose dynamics resembles that of a polynomial
(\cite{kiwi97} applies to \textbf{all} degrees but we state them for degree 2).
The map $\hf_\sim$ is called a \emph{topological extension of} $f_\sim$.

No we need the following two concepts.

\begin{dfn} Let $X$ be any space and $f:X\to X$ a map. A set $K\subset X$ is
said to be \emph{wandering} iff for every $m \neq n$, $f^n(K) \cap f^m(K) =
\0$.  The map $f$ has an \emph{identity return} iff there exist a continuum
$K\subset X$ (not a point) and an integer $n>0$ with $f^n|_K = \id|_K$.
\end{dfn}

It is proven in \cite{BL1}, that $f_\sim$ has no wandering continua. Let
$J\subset\sphere$ be a compact set and let $g:\sphere\to\sphere$ be a branched
covering map such that $g(J)=J=g^{-1}(J)$. A complementary component $U$ of $J$
is called a \emph{domain}. Some properties of the map $f_\sim$ are listed in
Proposition~\ref{TH NoWandDom or IdRet}. A {\em critical point} of a map is a
point for which there is no neighborhood on which the map is one-to-one.

\begin{prop}\label{TH NoWandDom or IdRet}
If $\sim$ is an invariant lamination then:

\begin{enumerate}

\item the induced map $f_\sim$ has no wandering continua and the
extension $\hf_\sim$ has no wandering domains;

\item  for any  $x\in J_{\sim}$ such that $f_\sim^n(x)=x$, there
exists a neighborhood $U\subset J_{\sim}$ of $x$ such that any $y\in
U\setminus x$ eventually exits $U$ under iterations of $f_\sim^n$
(in particular, $f_\sim$ has no identity return).

\end{enumerate}

\end{prop}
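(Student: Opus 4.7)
The plan for part (1) is to assemble the statement from known results. The first half — that $f_\sim$ has no wandering continua — is the main result of \cite{BL1}, which I would cite directly. For the second half, observe that every complementary domain of $J_\sim$ in $\sphere$ is the $\pi$-image either of $\sphere \setminus \bbd$ (which is globally $\hf_\sim$-invariant) or of a non-collapsed open gap of $\lam_\sim$ (Definition~\ref{assoclam}). A wandering orbit of domains of the second kind would induce a wandering orbit of pairwise disjoint gaps of $\lam_\sim$, whose boundaries project under $\pi$ to a wandering non-degenerate subcontinuum of $J_\sim$, contradicting the first half.

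For part (2), I argue by contradiction using (1). Let $g = f_\sim^n$, so $g(x) = x$, and suppose that every neighborhood $U$ of $x$ contains some $y \neq x$ with $g^k(y) \in U$ for all $k \ge 0$. Choosing such trapped-orbit points $y_m$ for a shrinking neighborhood base $U_m \searrow \{x\}$ and letting $K_m$ be the component of $x$ in $\overline{\{g^k(y_m) : k \ge 0\}}$, each $K_m$ is a non-degenerate subcontinuum of $\overline{U_m}$ with $g(K_m) \subseteq \overline{U_m}$. A nested-intersection and Hausdorff-limit argument, combined with the no-wandering-continua conclusion of (1) to prevent collapse, is then used to extract a non-degenerate $g$-invariant subcontinuum $K^\infty \ni x$ of $J_\sim$.

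It remains to rule out such $K^\infty$. Lifting to $\ucirc$, the set $B := q^{-1}(K^\infty)$ is closed, strictly contains the periodic class $C_x = q^{-1}(x)$, and satisfies $\si^n(B) \subseteq B$. The expansion $|\si^n(I)| = 2^n |I|$ on any arc where $\si^n$ is injective, combined with Lemma~\ref{REM *}'s uniqueness of the critical class (the only site where folding can occur), allows one to track a complementary arc of $B$ abutting $C_x$ through its iterates to reach a length-based contradiction. The main obstacle I anticipate is twofold: first, the rigorous extraction of a non-degenerate $g$-invariant continuum $K^\infty$ from the trapped-orbit data (since each $K_m$ itself collapses to $\{x\}$ as $U_m$ shrinks, the limit step must be performed carefully using local connectedness of $J_\sim$); and second, the bookkeeping of possible folding at the critical class in the final arc-chasing step. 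An identity-return argument exploiting the negation of (2) at an auxiliary periodic point, combined with the expansivity of $\si^n$ on $\ucirc$, might provide a cleaner alternative route.
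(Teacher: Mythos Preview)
Your argument for the second half of (1) has a genuine gap. From the domains $U,\hf_\sim(U),\hf_\sim^2(U),\dots$ being pairwise disjoint it does \emph{not} follow that their boundaries are pairwise disjoint: two disjoint Jordan domains in the sphere may touch along their boundaries. Hence you cannot conclude directly that $\bd U$ (equivalently, the $\pi$-image of the gap boundary) is a wandering continuum in $J_\sim$. The paper's proof confronts precisely this obstruction. It first observes that $J_\sim$ is locally connected and unshielded, so $\bd U$ is a Jordan curve; by \cite{BL1} the continuum $\bd U$ is non-wandering, so some iterates $\bd U$ and $\hf_\sim^m(\bd U)$ meet, and unshieldedness forces that intersection to be a single point $a$. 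The orbit of $\bd U$ is then a chain of Jordan curves attached consecutively at the points $\hf_\sim^{im}(a)$, and (splitting into the cases $\hf_\sim^m(a)\ne a$ and $\hf_\sim^m(a)=a$) one extracts a wandering sub-arc of some iterate of $\bd U$, disjoint from the attaching points, to reach the contradiction. Your gap-based shortcut skips this case analysis and is incomplete as written.

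For part (2) the paper gives no independent argument: it simply cites \cite[Lemma~3.8]{BO1}. Your proposal is therefore an attempt to re-derive that lemma from scratch, and the difficulties you yourself flag are real. In particular, the set $K_m$ you define --- the component of $x$ in $\overline{\{g^k(y_m):k\ge 0\}}$ --- need not be non-degenerate (the orbit closure could be totally disconnected), and even if each $K_m$ were non-degenerate, their Hausdorff limit as $U_m\searrow\{x\}$ is just $\{x\}$, so no non-degenerate $g$-invariant $K^\infty$ falls out of this construction without a further idea. If you want a self-contained argument you would need to import the backward-stability machinery of \cite{BO1}; for the purposes of this proposition, citing that result is the appropriate move.
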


\begin{proof} (1) By \cite{BL1} $f_\sim$ has no wandering continua.
This easily implies that $\hf_\sim$ has no wandering domains.
Indeed, first observe that all points of $J_\sim$ are accessible
from the \emph{basin of infinity}; such sets are said to be
\emph{unshielded} \cite{BO1}. Let $U$ be a wandering domain of
$J_\sim$. Then since $J_\sim$ is locally connected and unshielded,
$\partial U$ is homeomorphic to the unit circle $\T$.

Since $\bd U$ is a continuum, then it is non-wandering and for some
integers $n\ge 0, m>0$ we have $A=\hf_\sim^n(\bd U)\cap
\hf_\sim^{n+m}(\bd U)\ne \0$. Moreover, $A=\{a\}$ is a singleton,
for otherwise there will be points of $J_\sim$ shielded from
infinity. We may assume that $n=0$ and consider $\hg=\hf_\sim^m$
instead of $\hf_\sim$, so that $\{a\}=\bd U\cap \hg (\bd U)$. The
trajectory of the set $\bd U$ is a sequence of Jordan curves,
enclosing pairwise disjoint Jordan disks, consecutively attached to
each other at a sequence of points $\{\hg^i(a)\}$ such that for
fixed $i\ge 1$, $\hg^{i-1}(\bd U)$ meets $\hg^{i}(\bd U)$ at
$\hg^i(a)$, and, similarly, $\hg^{i}(\bd U)$meets $\hg^{i+1}(\bd U)$
at $\hg^{i+1}(a)$.  Since $J_\sim$ is unshielded, these are the only
points at which forward images of $\bd U$ can meet $\hg^{i}(\bd U)$.

We will consider two possibilities. Suppose first that $\hg(a)\ne a$. Then
$\hg^i(\bd U)\cap \hg^j(\bd(U)=\0$ when $j-i\ge 2$. Hence we may assume, without loss of generality,
that $\hg^i(\ol{U})$ does not contain a critical point of all $i$ and, hence, $\hg$ is a homeomorphism
on $\hg^i(\ol{U})$ for each $i\ge 0$. Let  $K\subset\hg(\bd U)$ be a closed arc disjoint from $\{a,\hg(a)\}$. Since
$\hg^i$ is a homeomorphism on $\hg(\bd U)$, then $\hg^i(K)$ is disjoint from
$\{\hg^i(a),\hg^{i+1}(a)\}$.  By the previous paragraph, no forward images of
$\bd U$ can meet $\hg^i(K)$.  Hence $K$ is wandering, a contradiction. If $g(a)=a$ then we may also assume,
 as above,  that $\hg^i(\ol{U})$ does not contain a critical point of $g$ for $i$ sufficiently large.
 Hence $\hg|_{\hg^i(\ol{U})}$ is a homeomorphism and any continuum $K\subset \hg^i(\ol{U})\setminus\{a\}$
 would be a wandering continuum, a contradiction.

(2) The claim is proven in \cite[Lemma 3.8]{BO1}.
\end{proof}

\sse{Kneadings}\label{kneadings} Let us discuss some results and notions
introduced in \cite[Section 4.3]{kiwi05} by Kiwi. We are interested in the case
when $d=2$; in this case Kiwi calls a pair of points $(\ta, \ta')$ a
\emph{critical portrait} (we call $\ol{\ta \ta'}$ a \emph{critical diameter})
for which he introduces \emph{aperiodic kneadings}. The critical diameter
$\ol{\ta \ta'}$ divides $\ol{\bbd}$ into two components $B_1, B_2$ whose
intersections with $\T$ are two open semicircles with endpoints $\ta, \ta'$.
Given $t\in \T$, its \emph{itinerary i(t)} is the sequence $I_0, I_1, \dots$ of
sets $B_1, B_2, \{\ta, \ta'\}$ with $\si^n(t)\in I_n (n\ge 0)$. A critical
diameter $\ol{\ta \ta'}$ such that $i(\si(t))$ is not periodic is said to have
a \emph{aperiodic kneading} (our definition is equivalent to that given by Kiwi
in \cite{kiwi05}). Call a lamination $\sim$ \emph{compatible} with a critical
portrait $(\ta, \ta')$ if $\ta\sim \ta'$. The results of \cite{kiwi05} in the
quadratic case imply that a critical diameter with aperiodic kneading has a
compatible non-degenerate lamination. This leaves open the question of the
existence of a compatible lamination when a critical portrait $(\ta, \ta')$ has
a periodic kneading, in particular when $\ta$ or $\ta'$ is periodic. Solving
this problem is our main result. The case when $\ta$ and $\ta'$ are not
periodic, but have periodic kneading, does not follow directly from Kiwi's
results and is addressed in Section~\ref{nonper}.

\section{Geometric laminations}\label{geola}

We follow Thurston \cite{thu85} but address mostly the case $d=2$. Let us
give a geometric interpretation to a lamination $\sim$. Namely, given
any $\sim$-class $g$ let us consider its convex hull $\ch(g)$. If $g$ is a
point then $\bd(\ch(g))=g$ is a point; if $g$ consists of two points then
$\bd(\ch(g))=\ch(g)$ is a chord of $\disk$. Finally, if $g$ consists of more
than two points then the boundary of $\ch(g)$ consists of chords of $\disk$ and
points of $\ucirc$. The union of all the boundaries of all $\sim$-classes is
denoted by $\lam(\sim)$ and is called the \emph{geometric lamination of
$\sim$}. The way the chords from $\lam(\sim)$ (Thurston calls them
\emph{leaves}) map onto each other is quite specific and can be formalized
which was done by Thurston in \cite{thu85} where these properties of leaves are
postulated and taken as the definition of the corresponding unions of leaves
called \emph{geometric laminations}.

The aim of our paper is to do the opposite, i.e. given a geometric
lamination to recover a lamination so that any two endpoints of a
leaf are equivalent. In the quadratic case we associate to a given
critical diameter the associated geometric lamination and then study
if there is any lamination corresponding to it in the above sense.
Also, we describe an algorithm which allows one to associate a
lamination to a given geometric lamination. First we would like to
remove one particular case from consideration. Namely, the
\emph{vertical lamination $V$} is defined by $xVy$ if and only if
$x=\pm \,y$. The corresponding geometric lamination consists of all
vertical chords of $\ucirc$. Observe that the vertical lamination
$V$ is compatible with the critical diameter $(1/4, 3/4)$; this
solves the main problem of the existence of a compatible lamination
for the critical diameter $(1/4, 3/4)$. Thus from now on we
\emph{always} assume that the \emph{critical diameter is not
vertical} and consider \emph{only geometric laminations which are
not the vertical lamination}.

\sse{Fundamental Construction} \label{LamConstruction} Suppose that
$\ell_\ta$ is a critical diameter. Put $\ell_\ta=\ol{\ta\ta'},
E_0=\{\ta, \ta'\}$ and let $\lam^\ta_0=\{ \ell_\ta\}$. Then
$\si^{-1}(E_0)=E_1$ is a set of 4 points disjoint from $E_0$. If
$0\nin \{\ta, \ta'\}$ we pair up these four points into two sets of
two points $\{a_1,b_1\}$ and $\{a_2,b_2\}$ so that the union of
$\lam^\ta_0$ and the two leaves $\ol{a_i b_i}$ is a geometric
lamination $\lam^\ta_1$. On the other hand, suppose that $a_0=0,
b_0=1/2$. Then $E_0\cup E_1=\{0, 1/2, 1/4, 3/4\}$ which includes two
points we have already visited. In this case we connect $1/4$ to
both $0$ and $1/2$ by means of two leaves, and we connect $3/4$ to
both $0$ and $1/2$ by means of two leaves also. In this way a finite
geometric lamination $\lam^\ta_1$ is created. Then we make another
pull back and create $\lam^\ta_2$, etc.

Assume $\lam^\ta_n$ has been constructed. To create the next
geometric lamination $\lam^\ta_{n+1}$ we add to $\lam^\ta_n$
\emph{all possible preimage leaves of leaves from $\lam^\ta_n$ which
are unlinked with the leaves of $\lam^\ta_n$} (strictly speaking, we
cannot talk of preimage leaves since the map is not defined inside
the unit disk - we talk about them meaning that their endpoints map
to the endpoints of their image leaves). Some preimages of leaves
from $\lam^\ta_n$ already belong to $\lam^\ta_n$. However, there
will be other, new preimages as well. If $\ta, \ta'$ are not
periodic then it is easy to see that all preimage leaves constructed
as above are pairwise disjoint. A bit more complicated picture holds
if $\ta$ or $\ta'$ is periodic (as in the case where $\ta=0$ above);
then the leaves of the geometric lamination $\lam^\ta_{n+1}$ will
not be pairwise disjoint although they can only meet at their
endpoints in $\ucirc$.

Let us show by induction that $\lam^\ta_{n+1}$ is a lamination. Clearly,
$\lam^\ta_1$ is a lamination. Now, by the construction new preimage leaves
cannot cross the old ones inside $\disk$. Suppose, by way of contradiction,
that two new preimage leaves cross each other inside $\disk$. Then their images
must cross inside $\disk$ too, a contradiction. This is the major principle
upon which Thurston's construction is based.

A leaf $\ell$ belongs to $\lam^\ta_m$ iff $\si^i(\ell)=\ell_\ta$ for some $i\le m$
and $\ell,$ $\si(\ell),$ $\dots,$ $\si^{i-1}(\ell)$ are
unlinked with $\ell_\ta$. Indeed, if $m=1$ it follows from the
construction. Let the claim hold for $m=k$ and prove it for $m=k+1$.
If $\ell\in \lam^\ta_{k+1}$ then by the construction it has to satisfy the
listed above conditions. Now, suppose that for some $i\le m$ we have that
$\si^i(\ell)=\ell_\ta$ and $\ell, \si(\ell), \dots, \si^{i-1}(\ell)$ are
unlinked with $\ell_\ta$. If $i\le k$ then $\ell\in \lam^\ta_m$ by induction.
If $i=k+1$ then $\si(\ell)\in \lam^\ta_m$ by induction. Let us show that $\ell$
is unlinked with all leaves in $\lam^\ta_k$. Indeed, by the assumptions it is
unlinked with $\ell_\ta$; if it crosses another leaf $\ell'$ of $\lam^\ta_k$
inside $\disk$ then its image will cross the image of $\ell'$, a contradiction
with $\si(\ell)\in \lam^\ta_k$. This proves that $\ell\in \lam^\ta_m$ iff
$\si^i(\ell)=\ell_\ta$ and $\ell, \si(\ell), \dots, \si^{i-1}(\ell)$ are
unlinked with $\ell_\ta$ for some $i\le m$. We will use this claim to check if
a preimage leaf of $\ell_\ta$ belongs to a finite pullback lamination.

First suppose $\ta,\ta'$ are non-periodic. On the first step
$\ol{\ta\ta'}$ divides the unit disk $\disk$ into two half-disks
with $\si$-images of their semicircles being $\ucirc$. Then two
first preimages of $\ol{\ta\ta'}$ are added, and since $\ta,\ta'$
are non-periodic, the new leaves are disjoint from $\ol{\ta\ta'}$
and hence from each other. The collection of thus created leaves
divides $\disk$ into 3 subsets whose boundaries intersect $\ucirc$
over finite collections of arcs with first images being the
semicircles from the previous step and the second images being
$\ucirc$. Inductively, the same picture holds on every step.

More precisely, on the step $n$ we have $2^n-1$ pairwise disjoint
leaves which partition $\disk$ into $2^n$ sets. Each element $A$ of
the partition has the boundaries consisting of the union $S_A$ of
several arcs of $\ucirc$ and equally many leaves. The $\si$-image of
$S_A$ is the set $S_B$ for the appropriate element $B$ of the
partition of generation $n-1$. Moreover, $B$ is divided by the
appropriate leaf $\ell$ of generation $n$ into two partition
elements $B', B''$ of generation $n$. Then the preimage of $\ell$
inside $A$ is the new leaf of generation $n+1$ which should be added
now. Its endpoints do no coincide with endpoints of leaves of
previous generations because $\ta,\ta'$ are not periodic. Thus, by
induction for each $n$ there exists a finite geometric lamination
$\lam^\ta_n$, with \emph{pairwise disjoint} leaves, such that if
$E_n$ is the set of endpoints of leaves of $\lam^\ta_n\setminus
\lam^\ta_{n-1}$, then $\sigma^{-1}(E_n)=E_{n+1}$ is the set of
endpoints of leaves from $\lam^\ta_{n+1}\setminus \lam^\ta_n$.

If one of $\ta,\ta'$ is periodic with least period $n>1$, then the
leaves of $\lam^\ta_{n+1}\setminus \lam^\ta_n$ will not be pairwise
disjoint. However, the leaves are unlinked and meet in at most an
endpoint of each, as in the case of pulling back the critical leaf
$\ol{0\frac12}$.  This happens exactly when an endpoint of a
preimage leaf of $\lam^\ta_n$ pulls back to the \emph{critical
value} $\si(\ta)=\si(\ta')$.

The set $\cup_n \lam_n$ is a countable union of pairwise disjoint
leaves which are the preimages of the leaf $\ell_\ta$.  We call
$\lam^\ta=\cup_n \lam_n$ the {\em pre-lamination} generated by
$\ell_\ta$, and we call the leaves of $\lam^\ta$ {\em precritical
leaves}, and we set $\lam=\lam^\ta_{\iy}=\ol{\cup_n \lam_n}$. Hence,
leaves other than precritical leaves must be limits of sequences of
precritical leaves, from one or both sides.  We will call these {\em
limit leaves} of $\lam$. (Of course, precritical leaves could also
be limit leaves.)

Thus $\lam^\ta_{\iy}$ is the collection of all leaves from
${\bigcup_{n=0}^\iy \lam^\ta_n}$ and their limit leaves (the latter
may include degenerate limit leaves, i.e. points of $\T$). Clearly,
$\lam^\ta_{\iy}$ is a closed family of leaves. Theorem~\ref{thu}
below concerns the family $\lam^\ta_{\iy}$ and was proven in
\cite[Proposition II.4.5]{thu85}.  The proof follows from the above
construction, the fact that $\lam^\ta_{\iy}$ is closed, and from the
fact that closure preserves leaves being unlinked.

\begin{thm}\label{thu} The family $\lam^\ta_{\iy}$ is an invariant geometric
lamination.
\end{thm}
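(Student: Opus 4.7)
The plan is to verify that $\lam^\ta_{\iy}$ satisfies Definition~\ref{geolam}: it is a compact family of leaves in which distinct leaves meet at most at a common endpoint on $\ucirc$, and it is invariant under $\si$ in both the forward and backward senses. The three tools, as the proof hint suggests, are the inductive structure of the finite stages $\lam^\ta_n$ built in the construction above, the fact that $\lam^\ta_{\iy}$ is closed, and the preservation of the ``meet only in endpoints'' condition under limits in the hyperspace of chords of $\ol{\disk}$.

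First I would dispatch the geometric lamination structure. Compactness is immediate, since $\lam^\ta_{\iy}=\ol{\bigcup_n \lam^\ta_n}$ is closed in the hyperspace of chords. For the meeting condition, I would argue by induction that each $\lam^\ta_n$ has the property that distinct leaves meet at most at a common endpoint: $\lam^\ta_0=\{\ell_\ta\}$ is trivial, and at stage $n+1$ the construction explicitly only adds preimage leaves that are unlinked with the leaves already present, while two new preimage leaves cannot cross each other in $\inte\,\disk$ either, since their $\si$-images would then cross in $\inte\,\disk$ and contradict the inductive hypothesis on $\lam^\ta_n$. Now if distinct $\ell,m\in\lam^\ta_{\iy}$ crossed in $\inte\,\disk$, take approximants $\ell_k,m_k\in\bigcup_n\lam^\ta_n$; for $k$ large both approximants also cross in $\inte\,\disk$, yet both lie in some common stage $\lam^\ta_N$, contradicting the inductive conclusion.

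For forward invariance, any $\ell\in\bigcup_n\lam^\ta_n$ satisfies $\si(\ell)\in\lam^\ta_{\iy}$ directly from the construction: either $\ell=\ell_\ta$ and $\si(\ell)=\{\si(\ta)\}$ is a degenerate limit leaf (approachable by pre-critical leaves whose endpoints converge to $\si(\ta)$), or $\ell$ was added as a preimage of some specific $\ell_0\in\lam^\ta_{n-1}$ and $\si(\ell)=\ell_0$. For a limit leaf $\ell=\lim_k\ell_k$ with $\ell_k$ pre-critical, continuity of $\si$ applied to endpoints gives $\si(\ell)=\lim_k\si(\ell_k)$, and closedness of $\lam^\ta_{\iy}$ places the limit in $\lam^\ta_{\iy}$.

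For backward invariance, a pre-critical leaf has two disjoint preimage leaves exhibited explicitly by the construction. For a limit leaf $\ell=\lim_k\ell_k$, choose disjoint preimage pairs $(\ell_k',\ell_k'')$ in $\lam^\ta_{\iy}$ and, by compactness, extract a subsequence with $\ell_k'\to\ell'$ and $\ell_k''\to\ell''$; continuity gives $\si(\ell')=\si(\ell'')=\ell$ with the correct pairing of endpoints, and closedness puts $\ell',\ell''\in\lam^\ta_{\iy}$. The hard part is to see that $\ell'\cap\ell''=\0$ persists in the limit: the endpoints of $\ell_k'$ and $\ell_k''$ split into two antipodal pairs (any two pre-images of a single point under $\si$ are diametrically opposite), and since antipodality is preserved under limits, the endpoint sets of $\ell'$ and $\ell''$ remain disjoint, even when $\ell$ degenerates. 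The absence of an interior crossing then follows from the meeting condition for $\lam^\ta_{\iy}$ already established, yielding $\ell'\cap\ell''=\0$.
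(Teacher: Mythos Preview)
Your proposal is correct and follows precisely the approach the paper sketches: the paper's entire proof is the remark that the result follows from the inductive construction of the $\lam^\ta_n$, the closedness of $\lam^\ta_{\iy}$, and the fact that closure preserves the unlinked condition, with a citation to \cite[Proposition II.4.5]{thu85}. You have simply fleshed out these three ingredients, including the forward and backward invariance by passing to limits, which is exactly what the citation to Thurston covers.
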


Given a geometric lamination $\lam$ we set $\Lstar=\bigcup
\lam\cup\ucirc$, the union of all leaves of $\lam$ and all points of
$\ucirc$. A \emph{gap} $G$ of a geometric lamination $\lam$ is the
closure of a bounded component of the complement of $\Lstar$.  The
boundary of a gap $G$ consists of leaves and points of $\ucirc$,
possibly infinitely many of each.  It is useful to distinguish gaps
whose boundary contains finitely many leaves as {\em finite gaps}
(really, inscribed polygons), and call others {\em infinite gaps}.
Proposition~\ref{gapdens} is proven in \cite{thu85,where?}.

\begin{prop}\label{gapdens} Gaps are dense in any quadratic
invariant geometric lamination $\lam$ provided it is not the vertical lamination.
\end{prop}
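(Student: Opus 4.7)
Argue by contradiction. Suppose gaps are not dense: then $\Lstar = \bigcup\lam\cup\ucirc$ has nonempty interior in $\bbd$, so some open ball $B \subset \inte(\bbd)$ is contained in $\bigcup\lam$ and every point of $B$ lies on a leaf. Because leaves of a geometric lamination meet only at their endpoints in $\ucirc$ and $B$ is disjoint from $\ucirc$, the leaves through $B$ are pairwise disjoint chord segments foliating $B$. Parameterizing one side of this band yields disjoint closed arcs $I_1, I_2 \subset \ucirc$ and a homeomorphism $\phi: I_1 \to I_2$ such that $\overline{t\,\phi(t)}\in\lam$ for every $t\in I_1$.

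Next, promote this band to a foliation of $\inte(\bbd)$ using $\si$-invariance and expansion. By invariance, $\overline{\si^n(t)\,\si^n(\phi(t))}\in\lam$ for all $t\in I_1$ and $n\ge 0$. Since $I_1$ has positive arc length and $\si$ doubles arc length on $\ucirc$, there exists $n$ with $\si^n(I_1) = \ucirc$. The resulting chords are pairwise disjoint (being leaves of $\lam$), vary continuously with $t$, and collectively have endpoints covering every point of $\ucirc$; continuity then forces them to foliate $\inte(\bbd)$, with a unique chord through each interior point.

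This foliation defines a continuous involution $\Phi: \ucirc\to\ucirc$ pairing each point with the other endpoint of its chord, and $\si$-invariance of $\lam$ gives the semi-conjugacy $\Phi(2t)\equiv 2\Phi(t)\pmod{1}$. A non-identity orientation-preserving continuous involution of $\ucirc$ is conjugate to $t\mapsto t+\tfrac{1}{2}$, and its chord family contains linked pairs, which is incompatible with a foliation; hence $\Phi$ is orientation-reversing. The functional equation at $t=0$ forces $\Phi(0)=0$, and at $t=1/2$, using injectivity of $\Phi$, one gets $\Phi(1/2)=1/2$; propagation through preimages of dyadic angles then pins down $\Phi(t)=-t$, so $\lam$ is the vertical lamination, contradicting the hypothesis. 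The main technical obstacle is the second paragraph: verifying rigorously that the $\si^n$-image band genuinely foliates all of $\inte(\bbd)$ without crossings or uncovered open subregions, in particular controlling the behavior at and near the critical diameter where the parameterization of the band might degenerate under iteration.
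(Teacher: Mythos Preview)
The paper does not supply its own proof of this proposition; it simply attributes the result to Thurston \cite{thu85}. So there is no in-paper argument to compare against.

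Your overall strategy is the standard one and is essentially correct: extract a foliated band of leaves from a hypothetical region with no gaps, iterate forward under $\si$ until the endpoint arcs cover $\ucirc$, argue that the iterated family foliates the whole disk, and then solve the functional equation $\Phi(2t)\equiv 2\Phi(t)$ for the resulting endpoint involution. Your analysis of $\Phi$ is right: the equation forces $\Phi(0)=0$, injectivity then gives $\Phi(1/2)=1/2$, so the two fixed points of the orientation-reversing involution are $0$ and $1/2$, and propagation through dyadics yields $\Phi(t)=-t$.

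The obstacle you flag is real and is exactly where the work lies. Two points deserve care. First, once some leaf in the band is a diameter its $\si$-image is a point, so the iterated family $t\mapsto \overline{\si^n(t)\,\si^n(\phi(t))}$ contains degenerate chords; you should check these do not break the covering argument. Second, a continuous one-parameter family of pairwise unlinked chords whose endpoints cover $\ucirc$ need \emph{not} cover $\inte(\bbd)$ in general: such a family can skirt around a finite polygonal gap by degenerating through each vertex in turn. What rules this out in your situation is that both endpoint maps $t\mapsto\si^n(t)$ and $t\mapsto\si^n(\phi(t))$ are \emph{strictly monotone} local homeomorphisms, and they move in opposite directions (since $\phi\colon I_1\to I_2$ is forced to reverse orientation by the unlinked condition on the original band). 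Strict opposite monotonicity prevents either endpoint from pausing at a vertex, so the family genuinely sweeps across every interior point. Once you record and use this monotonicity, the gap you identified closes.
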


It is easy to see that if $\lam$ is invariant then the set $\Lstar$
is a continuum in $\disk$ containing the unit circle, and the
density of gaps simply means that the open set $\disk\setminus
\Lstar$ is dense in $\disk$. In fact, $\Lstar$ is the closure of
$\bigcup_{n=0}^\iy \lam^\ta_n$ where the latter is understood as a
set of points from all the corresponding leaves, not as the
collection of leaves. Since we do not consider the vertical
lamination, from now on we consider \textbf{only geometric
laminations with dense gaps}.

In \cite{thu85} Thurston shows that all gaps in a quadratic invariant geometric lamination that do not collapse to a leaf under iteration are pre-periodic.   We shall not need this fact, but we will need a simpler fact about periodic gaps in the lamination
$\lam^\ta_{\iy}$ constructed above. It is convenient to define the {\em length}
$\len(\ell)$ of a leaf $\ell$ to be the length of the shorter subarc of $\ucirc$ that it subtends.

\begin{prop}\label{pergap}
Let $G$ be a gap of $\lam^\ta_{\iy}$ and suppose there is a least
$n$ such that $\si^n (G)=G$. Let $\ell$ be any leaf in $\bd G$. Then
$\ell$ is either preperiodic or precritical.
\end{prop}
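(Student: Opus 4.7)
The plan is to argue by contradiction. If $\ell$ is precritical we are done, so assume otherwise; then every iterate $\si^k(\ell)$ is a non-degenerate leaf of $\lam^\ta_{\iy}$. I will show that the forward $\si^n$-orbit of $\ell$ is contained in $\bd G$, that the total length of leaves in $\bd G$ is bounded, and that the length-expansion of $\si$ then forces this orbit to be finite, so $\ell$ is preperiodic.

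First I would verify that $\si^{kn}(\ell)\subset \bd G$ for every $k\ge 0$. Because $\si^n(G)=G$ and $\ell\subset G$, the image $\si^{kn}(\ell)$ is contained in $G$; and no non-degenerate leaf of an invariant geometric lamination can lie in the interior of a gap (by the definition of $\Lstar$ and of gap), so $\si^{kn}(\ell)$ is in fact a leaf of $\bd G$. If the orbit $\{\si^{kn}(\ell):k\ge 0\}$ is finite, then some iterate is periodic, hence $\ell$ is preperiodic. So I will assume the orbit consists of pairwise distinct leaves of $\bd G$.

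Next I would establish the length inequality $\sum_{\ell'\subset \bd G}\len(\ell')\le 1$. Each leaf $\ell'\in\bd G$ separates from $G$ a ``cap'' bounded by $\ell'$ and a subarc of $\ucirc$; these caps are pairwise disjoint, at most one of them meets $\ucirc$ in an arc exceeding $1/2$, and a direct accounting yields the claimed bound. Applied to the orbit, this forces $a_k:=\len(\si^{kn}(\ell))\to 0$.

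The decisive step is the length-doubling contradiction. Since $\len(\si(\ell'))=2\len(\ell')$ whenever $\len(\ell')\le 1/4$, once $a_k<2^{-n-1}$ the intermediate lengths satisfy $\len(\si^{kn+j}(\ell))=2^j a_k<1/4$ for $j=0,\dots,n-1$, giving $a_{k+1}=2^n a_k$. Thus eventually $a_{k+1}/a_k=2^n>1$, so $(a_k)$ grows geometrically, contradicting $a_k\to 0$. Hence the orbit must be finite and $\ell$ is preperiodic. The principal obstacle is the structural step that images of $\ell$ stay on $\bd G$ as non-degenerate leaves; once this is in place, both the summability of lengths around a convex gap and the doubling computation are routine.
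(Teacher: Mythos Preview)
Your proposal is correct and follows essentially the same approach as the paper's proof: both argue by contradiction, use that the $\si^n$-orbit of $\ell$ consists of infinitely many distinct non-degenerate leaves of $\bd G$ whose lengths must therefore tend to zero, and then derive a contradiction from the length-expanding property of $\si$ on short leaves. You make the expansion step more explicit (with the threshold $2^{-n-1}$ and the exact factor $2^n$) and justify the null-sequence claim via summability of the cap arcs, whereas the paper simply invokes that $\si^n$ is locally expanding with some $\delta>0$; these are cosmetic differences, not a genuinely different route.
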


\begin{proof}
By way of contradiction, suppose that $\ell$ is a leaf of $\bd G$
which is neither preperiodic nor precritical.  Consider the iterates
of $\si^n$ on $\ell$.  The sequence $\{\si^{ni}(\ell)\}_{i=0}^\iy$
is an infinite set of non-degenerate leaves in $\bd G$, hence
$\{\len(\si^{ni}(\ell))\}_{i=0}^\iy$ forms a null sequence in
length.  But $\si^n$ is a locally expanding map. Hence, there is a
$\delta>0$ such that $\len(\ell)<\delta\implies
\len(\si^n(\ell))>\len(\ell)$.  There are at most finitely many
leaves of length $\ge\delta$.  Choose $N\in\nat$ such that for all
$i\ge N$, $\len(\si^{ni}(\ell))<\delta$.  But then
$\len(\si^{n(i+1)}(\ell))>\len(\si^{ni}(\ell))$, which contradicts
that the sequence $\{\len(\si^{ni}(\ell))\}_{i=0}^\iy$ is null.
\end{proof}

In the rest of this section we work mainly with geometric
laminations, not necessarily invariant. We need some notions dealing
with equivalences. Equivalences $\sim, \approx$ can be compared in
the sense of their graphs $Gr(\sim), Gr(\approx)$: we say that
$\sim$ is \emph{finer} than $\approx$ if $Gr(\sim)\subset
Gr(\approx)$. Equivalently, $\sim$ is finer than $\approx$ if
$\sim$-classes are subsets of $\approx$-classes. Yet another useful
way to define this is that $\sim$ is finer than $\approx$ if $x\sim
y$ always implies $x\approx y$. Given two closed equivalence
relations $R, Q$ one can define their intersection $P=Q\cap R$ as
the equivalence whose classes are intersections of equivalence
classes of $R$ and $Q$. Clearly, $P$ is a well-defined closed
equivalence relation too. Moreover, if $R, Q$ are laminations then
$P$ is a lamination too. The same can be done not only for two but
for any family of closed equivalence relations (laminations).

Now, suppose that $\lam$ is a geometric lamination. Then a closed equivalence
relation $R$ on $\ucirc$ is said to be \emph{compatible} with $\lam$ if for any
two points $x, y\in \ucirc$ the fact that $\ol{xy}\in\lam$ implies $xRy$.
There exists a finest closed equivalence relation $R$ on $\ucirc$ compatible with
$\lam$. Indeed, observe that the degenerate lamination is compatible with
$\lam$. Define the lamination $R_\lam$ as the intersection of all laminations
compatible with $\lam$ (in other words, declare two points $x, y\in \ucirc$
equivalent, denoted $xR_\lam y$, if they are equivalent in the sense of all the
laminations compatible with $\lam$). Then obviously $R_\lam$ is the finest
lamination compatible with $\lam$.

Given a geometric lamination $\lam$, we study the quotient space
$J_\lam=\ucirc/R_\lam$; we want to determine when $J_\lam$ is not a
point. Since invariant geometric laminations are often obtained by
an infinite process (like the construction of $\lam^\ta_{\infty}$)
it is in general difficult to decide which points are equivalent
and, in particular, when $J_\lam$ is non-degenerate. For this reason
we define a specific lamination $\sim_\lam$ in a different way, and
show that $R_\lam$ is equal to $\sim_\lam$. Lemma~\ref{contain}
studies continua inside $\Lstar$.

\begin{lem}\label{contain} If $K\subset\Lstar$ is a
continuum then the following claims hold.

\begin{enumerate}

\item If $\ell\in\lam$ is a leaf with $K\cap \ell\ne \0$ and $K$ does not contain
an endpoint of $\ell$ then $K\subset \ell$. In particular,
if $K\cap \ucirc\ne\0$ then $K$ contains an endpoint of $\ell$ and
if $K$ meets two distinct leaves then it meets $\ucirc$.

\item If $G$ is a gap and $x, y\in \ucirc\cap G\cap K$ then either
$(x, y)\cap G\subset K$ or $(y, x)\cap G\subset K$.

\end{enumerate}

\end{lem}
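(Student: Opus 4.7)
For part (1), the plan is to run two clopen-partition arguments, exploiting that the chord $\ell$ separates $\overline{\disk}$. First I would dispose of the degenerate case $a=b$: then $K\cap\ell\neq\0$ would force $K\ni a$, contradicting the hypothesis, so the assertion holds vacuously. Assuming $\ell$ is nondegenerate, let $H_1$ and $H_2$ be the two closed half-disks bounded by $\ell$, so $H_1\cap H_2=\ell$ and the two components of $\overline{\disk}\setminus\ell$ are the open sets $H_1\setminus\ell$ and $H_2\setminus\ell$. I would then write $K=(K\cap H_1)\sqcup(K\cap(H_2\setminus\ell))$: the first piece is closed in $K$ (as $H_1$ is closed in $\overline{\disk}$) and the second is open in $K$ (as $H_2\setminus\ell$ is open), so both are clopen in $K$. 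Since $K$ is connected and $K\cap H_1\supset K\cap\ell\neq\0$, we must have $K\subset H_1$. Running the same trick a second time, with the clopen partition $K=(K\cap\ell)\sqcup(K\cap(H_1\setminus\ell))$ inside $H_1$, yields $K\subset\ell$. The two ``in particular'' statements are then immediate.

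For part (2), I would argue by contradiction using a single separating chord. Suppose there exist $z\in(x,y)\cap G$ and $w\in(y,x)\cap G$ with $z,w\notin K$. Because $z$ and $w$ lie on opposite open arcs of $\ucirc\setminus\{x,y\}$, the cyclic order on $\ucirc$ is $x,z,y,w$, so $x$ and $y$ lie on opposite open sides of the chord $\overline{zw}$. Convexity of the gap $G$ gives $\overline{zw}\subset G$, and there are just two cases: either $\overline{zw}\in\lam$ (in which case it lies on $\bd G$), or the relative interior of $\overline{zw}$ lies in $\interior G$ and is therefore disjoint from $\Lstar$. In the non-leaf case $K\cap\overline{zw}\subset\{z,w\}=\0$. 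In the leaf case, if $K\cap\overline{zw}$ were nonempty then part (1), applied to the leaf $\overline{zw}$ whose endpoints $z,w$ lie outside $K$, would force $K\subset\overline{zw}$, contradicting $x\in K\setminus\overline{zw}$. Either way $K\cap\overline{zw}=\0$, and so $K$ is a connected set in $\overline{\disk}\setminus\overline{zw}$ that meets both open components (via $x$ and $y$ respectively), the desired contradiction.

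The main technical point, I expect, is recognizing the asymmetric partition in part (1)—pairing the closed half-disk $H_1$ with the open set $H_2\setminus\ell$—so that both pieces are simultaneously open and closed in $K$; the iteration to pull $K$ into $\ell$ itself then drops out by repeating the same trick inside $H_1$. Once part (1) is in hand, part (2) reduces to a single-crosscut planar separation argument, and the role of part (1) is precisely to handle the mildly annoying subcase in which the separating chord $\overline{zw}$ happens itself to be a leaf of $\lam$.
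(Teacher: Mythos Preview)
Your argument for part (1) contains a genuine error. You write $K=(K\cap H_1)\sqcup(K\cap(H_2\setminus\ell))$ and claim that because the first piece is closed and the second is open, ``both are clopen in $K$.'' This inference is false: in a two-set partition $K=A\sqcup B$, the statements ``$A$ is closed in $K$'' and ``$B$ is open in $K$'' are \emph{equivalent} (each is the complement of the other), so you have verified only one condition, not two. The interval $[0,1]=[0,\tfrac12]\sqcup(\tfrac12,1]$ already shows that a closed/open partition of a connected space need not be trivial. The same mistake recurs verbatim in your second step inside $H_1$.

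This is not a gap that can be patched by rephrasing: your argument never uses the hypothesis $K\subset\Lstar$ in any essential way, nor the standing assumption that gaps are dense, and both are indispensable. In the vertical lamination one has $\Lstar=\overline{\disk}$, and then any small disk centred on an interior point of $\ell$ is a continuum meeting $\ell$, avoiding its endpoints, yet not contained in $\ell$; so the statement is simply false without gap-density. The paper's proof uses density directly: after deleting small neighbourhoods $U,V$ of the endpoints $a,b$, one can run ``in-gap'' curves on either side of $\ell\cap W$ (where $W=\disk\setminus(U\cup V)$), disjoint from $\Lstar$, which exhibit the truncated chord $\ell\cap W$ as a full connected component of $\Lstar\cap W$. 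That is exactly what forces $K\subset\ell$ once $U,V$ are chosen small enough to miss $K$. When leaves accumulate on $\ell$ from one side, $\ell$ is \emph{not} relatively open in $\Lstar$, so no clopen argument in $\overline{\disk}$ alone can work.

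Your part (2) is essentially the paper's argument. You route through the straight chord $\overline{zw}$ and then invoke part (1) to handle the subcase where that chord happens to be a leaf; the paper sidesteps this dependency by taking instead an arc $T$ from $z$ to $w$ whose interior lies in $\interior G$, hence is automatically disjoint from $\Lstar\supset K$, and which separates $x$ from $y$ directly.
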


\begin{proof} (1) Given a leaf $\ell$ with endpoints $a, b$, choose small disks $U, V$
centered at $a, b$. Set $W=\disk\setminus (U\cup V)$. Since gaps are dense,
arbitrarily close to $\ell\cap W$ from either side there are ``in-gap'' curves
$Q, T$ connecting points of $\ol{U}\cap \disk$ with points of $\ol{V}\cap
\disk$ and disjoint from $\Lstar$. Hence $\ell\cap W$ is a component of
$\Lstar\cap W$. If a continuum $K\subset \Lstar$ is non-disjoint from $\ell$
and does not contain an endpoint of $\ell$ then $U, V$ can be chosen so small
that $K\subset W$ and so by the above $K\subset \ell$.


(2) Suppose that $u\in (x, y)\cap G\setminus K$ and $v\in (y, x)\cap G\setminus
K$. Connect points $u$ and $v$ with an arc $T$ inside $G$. Then $T$ separates
$x$ from $y$ in $\disk$ and is disjoint from $K$, a contradiction.
\end{proof}

We are ready to give a constructive definition of the lamination which, as we
prove later, coincides with $R_\lam$.

\begin{dfn}\label{df-lageo} Call a continuum $K\subset\Lstar$ which meets
$\ucirc$ in a countable set an \emph{$\om$-continuum.} Given a geometric
lamination $\lam$, let $\sim_\lam$ be the equivalence relation in $\ucirc$
\emph{induced by} $\lam$ as follows: $x\sim_\lam y$ iff there exists an $\om$-continuum $K
\subset\Lstar$ containing $x$ and $y$.
\end{dfn}

Clearly, $\sim_\lam$ above is an equivalence relation which is
compatible with $\lam$. For $x\in \ucirc$ we denote by $\eq{x}$ the
$\sim_\lam$-class of $x$.

\begin{thm}\label{omega closed} If $\lam$ is a geometric lamination then
$\sim_\lam$ is a lamination.  Moreover, $\sim_\lam=R_\lam$, the finest
equivalence relation compatible with $\lam$.
\end{thm}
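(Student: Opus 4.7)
The plan is to establish the second assertion $\sim_\lam = R_\lam$; once this is done, the first assertion follows, because $R_\lam$ is a lamination as the intersection of all compatible laminations, a fact already noted earlier in the paper.

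First I would verify the routine facts about $\sim_\lam$. Reflexivity holds because $\{x\}$ is an $\om$-continuum; symmetry is immediate; and for transitivity, if $K_1, K_2$ are $\om$-continua witnessing $x \sim_\lam y$ and $y \sim_\lam z$, then $K_1 \cup K_2$ is connected through $y$, lies in $\Lstar$, and meets $\ucirc$ in the countable set $(K_1 \cap \ucirc) \cup (K_2 \cap \ucirc)$. Also, $\sim_\lam$ is compatible with $\lam$ since for each leaf $\ol{ab} \in \lam$, the leaf itself is an $\om$-continuum (meeting $\ucirc$ only in $\{a, b\}$) witnessing $a \sim_\lam b$. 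By the definition of $R_\lam$ as the finest compatible equivalence, this gives the easy inclusion $xR_\lam y \Rightarrow x\sim_\lam y$.

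The substantive step is the reverse implication: for every closed equivalence $\sim$ compatible with $\lam$, I claim $x \sim_\lam y$ implies $x \sim y$. Fix such a $\sim$ and, for each $\sim$-class $A$, collapse $\ch(A) \subset \ol{\disk}$ to a point; since $\sim$-classes are pairwise unlinked their convex hulls are pairwise disjoint, and the resulting equivalence $\approx$ on $\ol{\disk}$ is closed (using closedness of $\sim$ and Hausdorff continuity of $A\mapsto\ch(A)$). Let $\ol\pi\colon\ol{\disk} \to \ol{\disk}/\approx$ be the quotient map; it is continuous, and compatibility of $\sim$ with $\lam$ forces each leaf $\ol{ab} \in \lam$ to lie inside the convex hull of the $\sim$-class containing $a$ and $b$, hence to collapse to a single point under $\ol\pi$.

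Now, for any $\om$-continuum $K$, the image $\ol\pi(K)$ is a continuum. If $K$ lies inside a single leaf, $\ol\pi(K)$ is clearly a singleton. Otherwise, Lemma~\ref{contain}(1) forces every leaf $\ell$ meeting $K$ to contain an endpoint lying in $K$, hence in the countable set $K\cap\ucirc$; since each such $\ell$ collapses to $\ol\pi$ of that endpoint, $\ol\pi(K) \subset \ol\pi(K\cap\ucirc)$ is countable. A non-degenerate continuum is uncountable (a classical theorem of Sierpi\'nski), so $\ol\pi(K)$ must be a single point. Consequently, any two $\ucirc$-points of $K$ are $\sim$-equivalent, yielding $\sim_\lam\,\subset\,\sim$. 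Taking the intersection over all compatible $\sim$ gives $\sim_\lam\,\subset\,R_\lam$ and, combined with the earlier inclusion, $\sim_\lam = R_\lam$. The main obstacle is precisely this continuum argument: one must verify that $\approx$ is a closed equivalence (so $\ol\pi$ is continuous), and then exploit the structural content of Lemma~\ref{contain}(1) to get $\ol\pi(K)$ countable; everything else is bookkeeping.
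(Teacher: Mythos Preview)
Your ``hard direction'' is essentially the paper's own argument for $\sim_\lam\subset\sim$ (last paragraph of the proof), and it is correct---but note that it only works when $\sim$ is a \emph{lamination}, since you use that $\sim$-classes are pairwise unlinked to make the convex hulls disjoint. So what you actually prove is $\sim_\lam\subset\sim$ for every compatible \emph{lamination} $\sim$, hence $\sim_\lam\subset R_\lam$.

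The gap is in the other direction. You write ``by the definition of $R_\lam$ as the finest compatible equivalence, this gives $xR_\lam y\Rightarrow x\sim_\lam y$.'' But $R_\lam$ is defined in the paper as the intersection of all compatible \emph{laminations}, not of all compatible closed equivalences. Thus knowing merely that $\sim_\lam$ is an equivalence relation compatible with $\lam$ does not put $\sim_\lam$ among the relations whose intersection is $R_\lam$; you need $\sim_\lam$ itself to be a lamination. Your plan is therefore circular: you want to deduce that $\sim_\lam$ is a lamination from $\sim_\lam=R_\lam$, but establishing $R_\lam\subset\sim_\lam$ already requires that fact. (Trying to reinterpret $R_\lam$ as the finest compatible \emph{closed} equivalence does not help either: then you would need $\sim_\lam$ to be closed for the easy direction, and you would no longer know a priori that $R_\lam$ is a lamination.)

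What is missing is exactly the substantial part of the paper's proof: a direct verification that $\sim_\lam$ is a lamination. The paper shows (i) each $\sim_\lam$-class is closed (via the ordered family $\lam_{1,\infty}$ of separating leaves and Lemma~\ref{contain}), (ii) classes are pairwise unlinked (if $\om$-continua witnessing $x_1\sim_\lam x_3$ and $x_2\sim_\lam x_4$ are linked they must intersect, so their union is an $\om$-continuum making all four points equivalent), and (iii) the graph of $\sim_\lam$ is closed (the Hausdorff limit of disjoint $\om$-continua in disjoint convex hulls is a single leaf). Only after this does the paper run your collapsing-quotient argument to get $\sim_\lam=R_\lam$. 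You have reproduced that final step but skipped the three preliminary ones, and without them the equality cannot be closed up.
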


\begin{proof} We show first that equivalence classes are closed.  We may
assume that there exists a sequence $\{x_i\}$ in $\eq{x_1}$ such that
$x_1<x_2<\ldots<x_\infty$ with $\lim x_i=x_\infty$ and
$\ol{x_1x_{\infty}}\not\in\lam$ (otherwise trivially $x_\infty\in \eq{x_1}$),
where $<$ denotes the induced circular order on $\ucirc=\real/\mathbb{Z}$. We
show that $x_\infty\in\eq{x_1}$. Since $x_i\in \eq{x_1}$ for every $i$ then there exists an $\om$-continuum
$K_i$ containing both $x_1$ and $x_i$. Also, let $\lam_{1,\infty}$ be the
collection of all leaves $\ell=\overline{pq}\in\lam$ with $p$ and $q$ in
distinct components of $\ucirc\setminus\{x_1,x_\infty\}$. Because $\lam$ is
unlinked, $\lam_{1,\infty}$ has a linear order defined by $\ell<\ell'$ provided
that $\ell$ separates the (open) disk between $\ell'$ and $x_1$ (the leaves
$\ell$ and $\ell'$ may meet on the unit circle in which case we can only talk
about separation in the open unit disk).

First assume that  $\lam_{1,\iy}=\0$. Then $x_1, x_\iy$ belong to the same gap $G$. By
Lemma~\ref{contain}, $K_i$ contains either $[x_1, x_i]\cap G$ or $[x_i, x_1]\cap
G$ for any $i$. If $[x_i, x_1]\cap G\subset K_i$ then $x_\iy\in K_i$ and hence
$x_1\sim_\lam x_\iy$ as desired. If $[x_1, x_i]\cap G\subset K_i$ for any $i$
then $[x_1, x_\iy]\cap G$ is countable and the part $Q$ of $\bd G$ which
extends, in the positive direction, from $x_1$ to $x_\iy$, is an
$\om$-continuum containing $x_1$ and $x_\iy$. Hence again $x_1\sim_\lam x_\iy$
as desired.

Next, assume that $\lam_{1,\iy}\ne \0$. Let $M=\sup\{\lam_{1, \iy}\}=\ol{pq}$
($p$ and $q$ may coincide, in which case $M=\{x_\iy\}$). Since $\Lstar$ is
closed, $M\subset\Lstar$. Let us show that $\{p, q\}\subset \eq{x_1}$. Indeed,
choose a sequence (or a finite set) of leaves
$\ell_0<\ell_1<\dots$, $\lim \ell_i=M$ and points $x_{n(i)}, i=1, 2, \dots$
such that the component $L_i$ of $\disk\setminus[\ell_{i-1}\cup\ell_i]$, whose
boundary contains $\ell_i$ and $\ell_{i+1}$, separates $x_1$ and $x_{n(i)}$.
Then $(\ol{L_i}\cap K_{n(i)})\cup \ell_{i-1}\cup \ell_i=R_i$ is an
$\om$-continuum which extends from $\ell_{i-1}$ to $\ell_i$. Also, let $R_0$ be
the closure of the intersection of $K_{n(1)}$ with the component of
$\disk\setminus \ell_0$ containing $x_1$ union $\ell_0$. Then $R=M\cup
(\cup_{i=1}^\iy R_i)$ is an $\om$-continuum, and hence $p, q\in \eq{x_1}$. If
$x_\iy\in M$ then we are done. If $x_\iy\nin M$ then it follows from the
definition of $\lam_{1, \iy}$ that there exists a gap $G$ such that $\bd G$
contains $M$ and $x_\iy$. Let $p\in (x_1, x_\iy)$. Then the argument from the
previous paragraph applies to $p$ (playing the role of $x_1$) and $x_\iy$ and
so $p\sim_\lam x_\iy$. Together with $x_1\sim_\lam p$ this implies
$x_1\sim_\lam x_\iy$ as desired.


Let us show that $\sim_\lam$-classes are pairwise unlinked. Indeed, otherwise
there exist 4 distinct points $x_i, i=1, \dots, 4$ such that $x_1\sim_\lam x_3,
x_2\sim_\lam x_4$ and $x_1, x_3$ are not $\sim_\lam$-equivalent. However then
the location of the points $x_1, \dots, x_4$ on the circle implies that
$\om$-continua $K'$ (containing $x_1, x_3$) and $K''$ (containing $x_2, x_4$)
are non-disjoint and hence their union is an $\om$-continuum containing all 4
points $x_1, \dots, x_4$ and showing that in fact they are all equivalent, a
contradiction.


Suppose next that $x_i\sim_\lam y_i$ and $(x_i,y_i)\to (x_{\infty},y_{\infty})$
in $\ucirc\times \ucirc$. We must show that $x_{\infty}\sim_\lam y_{\infty}$.
Assume that $x_{\infty}\ne y_{\infty}$; since classes are closed we may also
assume that $x_i\ne x_\iy, y_i\ne y_\iy$ for any $i$. Let $K_i$ be
$\omega$-continua containing $x_i$ and $y_i$. We may assume that $\lim
K_i=K_\infty\subset\Lstar$ exists (in the sense of Hausdorff metric). Since
classes are closed and pairwise unlinked and $x_\iy\ne y_\iy$, we may also
assume that all $K_i$ are pairwise disjoint and $x_i, y_i$ are such that the
chord $\ol{x_iy_i}$ is disjoint from the chord $\ol{x_\iy y_\iy}$ for any $i$
which implies that the convex hull of each $\eq{x_i}$ is disjoint from
$\ol{x_\iy y_\iy}$. Each $K_i$ is contained in the convex hull of $\eq{x_i}$.
Since the convex hulls of the $\eq{x_i}$'s are disjoint, then $K_{\infty}$ must
be a leaf in $\lam$.   So $x_{\infty}\sim_\lam y_{\infty}$, and $\sim_\lam$ is
a lamination.

Let us show that $\sim_\lam$ and $R_\lam$ are the same.  Since $\sim_\lam$ is
compatible with $\lam$, $R_\lam$ is finer than $\sim_\lam$.  We show that
$\sim_\lam$ is finer than any lamination compatible with $\lam$. Let $\sim$ be
a lamination   compatible with $\lam$. Then for any two points $x, y\in \ucirc$
with $x\sim_\lam y$ we have to prove that $x\sim y$. Since $x\sim_\lam y$ then
there exists an $\om$-continuum $K\subset \Lstar$. To proceed we first extend
the equivalence $\sim$ onto $\disk$ by declaring two points $u, v\in \disk$
equivalent if and only if for some class $A$ we have $u, v\in \ch(A)$. Clearly
the new equivalence $\approx$ is an extension of $\sim$. Set $Z=\disk/\approx$
and let $\pi:\disk\to Z$ be the corresponding quotient map. Let us show that
$\pi(K)=\pi(K\cap \ucirc)$. Indeed, if $x\in K\setminus \ucirc$ then $x$
belongs to the appropriate leaf $\ell$ and by Lemma~\ref{contain} and endpoint
$a$ of $\ell$ belongs to $K$. Since $\pi(a)=\pi(x)$ we see that $\pi(x)\in
\pi(K\cap \ucirc)$ and so $\pi(K)=\pi(K\cap \ucirc)$. Since $K$ is an
$\om$-continuum then $K\cap \ucirc$ is countable, hence $\pi(K)=\pi(K\cap
\ucirc)$ is at most countable and therefore a point. Thus, $\pi(x)=\pi(y)$ and
$x\sim y$ as desired.

\end{proof}

\section{Non-periodic Critical Class}\label{nonper}

By Theorem~\ref{thu} (see \cite[Proposition II.4.5]{thu85}) for a critical leaf
$\ell_\ta$ we can construct an invariant geometric lamination $\lam^\ta_{\iy}$.
Slightly abusing the language let us call a critical leaf $\ell_\ta$
\emph{periodic} if $\si(\ta)$ is periodic and \emph{non-periodic} otherwise.
Observe that $\ell_\ta$ is periodic if and only if either $\ta$ or $\ta'=\ta +
1/2$ is periodic. In this
section we show that if $\ell_\ta$ is non-periodic then the lamination
$\sim_{\lam^{\ta}_{\iy}}$ constructed in the previous section is
non-degenerate. The remaining part of the paper is concerned with the case that $\ell_\ta$
is periodic.

Note that for an invariant geometric lamination $\lam$, we can extend the map
$\si$ over $\complex$ as follows.  First extend $\si$ over $\complex\setminus
\disk$ by sending the point $(r,\theta)$ in polar coordinates to the point
$(r^2, \si(\ta))$. Next extend linearly over $\Lstar$ and subsequently over
gaps, by mapping for a gap $Q$ the barycenter of $Q\cap \ucirc$  to the
barycenter of its image and by mapping the line segment from the barycenter of
a gap to a point on its boundary linearly onto the corresponding line segment
in its image. We will denote this extended map by $\Si$.

Note that $\Si$ is the composition of a monotone map $m:\complex\to X$ and an
open and light map $g:X\to \complex$. Since open maps are confluent \cite[1.5]{whyb37}, $\Si$ is
confluent (i.e., for each continuum $K\subset\complex$ and each component $C$
of $\Si^{-1}(K)$, $\Si(C)=K$).

Our ``Test for Degeneracy" (Theorem~\ref{thnonper}) applies to
the geometric lamination $\lam^\ta_\iy$ constructed by pulling back a critical
leaf $\ell_\ta$.  By\cite[Proposition II.4.5]{thu85}, $\lam^\ta_\iy$ is a
geometric lamination and its leaves can only meet at points of $\ucirc$.  To
prove the theorem, we study how the leaves of $\lam^\ta_\iy$ can
meet under the assumption that $\ell_\ta$ is non-periodic.

\smallskip

\noindent\textbf{Non-Periodic Assumption.} For the rest of this section assume
that $\lam=\lam^\ta_\iy$ is generated by pulling back a {\em non-periodic}
critical leaf $\ell_\ta$.

\smallskip

 The following series of lemmas is trivial but
important.

\begin{lem} \label{preimageleaves} If two leaves of $\lam$ meet, at least
one is a limit leaf.
\end{lem}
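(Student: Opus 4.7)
The plan is to reduce the lemma directly to the construction of $\lam^\ta_{\iy}$ carried out earlier under the non-periodic hypothesis. First I would remind the reader that, because $\si(\ta)=\si(\ta')$, the hypothesis ``$\ell_\ta$ is non-periodic'' (i.e.\ $\si(\ta)$ is not periodic) is equivalent to neither $\ta$ nor $\ta'$ being periodic, so every conclusion drawn in the non-periodic part of the construction applies here. In particular, I would invoke the inductive statement proved there: for every $n$, the finite geometric lamination $\lam^\ta_n$ is a family of pairwise \emph{disjoint} leaves (no two share a point, endpoints included). This was the precise reason, noted in the construction, for distinguishing the non-periodic case from the periodic case, where leaves of $\lam^\ta_{n+1}\setminus\lam^\ta_n$ are only known to be unlinked and can share endpoints.

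Next I would observe that pairwise disjointness upgrades automatically from each finite stage to the whole pre-lamination $\bigcup_n\lam^\ta_n$. Indeed, any two precritical leaves $\ell,\ell'$ belong to a common $\lam^\ta_N$ (take $N$ larger than both of the indices at which they first appear), and then disjointness of $\ell$ and $\ell'$ is part of the inductive statement for that $N$. Hence any two distinct precritical leaves of $\lam=\lam^\ta_\iy$ are disjoint.

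The lemma then follows by contrapositive. Suppose two distinct leaves $\ell_1,\ell_2\in\lam$ meet. If both were precritical, the preceding step would force $\ell_1\cap\ell_2=\0$, a contradiction. Therefore at least one of $\ell_1,\ell_2$ lies in $\lam\setminus\bigcup_n\lam^\ta_n$, and by the definition of \emph{limit leaf} given in the construction that leaf is a limit leaf.

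There is essentially no obstacle to carrying out this plan: the substance of the lemma has already been absorbed into the earlier construction, and the argument amounts to repackaging it. The only point needing a line of care is the translation between ``$\ell_\ta$ non-periodic'' and ``both $\ta$ and $\ta'$ non-periodic,'' which is immediate from $\si(\ta)=\si(\ta')$.
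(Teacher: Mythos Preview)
Your proposal is correct and takes essentially the same approach as the paper: both arguments rest on the fact, established during the fundamental construction, that when $\ell_\ta$ is non-periodic the precritical leaves are pairwise disjoint, so two meeting leaves cannot both be precritical. The paper compresses this into the single sentence ``Since $\ell_\ta$ is non-periodic, no precritical leaves can meet,'' whereas you spell out the reference to the construction and the contrapositive explicitly.
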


\begin{proof}  Since $\ell_\ta$ is non-periodic, no precritical leaves
can meet.
\end{proof}

\begin{lem}  \label{threeleavesmeet}
At most three leaves of $\lam$ can meet at a point, and if three
leaves do meet, the middle leaf is a precritical leaf.
\end{lem}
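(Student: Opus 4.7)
The plan is to show that any leaf of $\lam$ which has other leaves on both of its angular sides at the common endpoint $p$ must be a precritical leaf. Combined with the Non-Periodic Assumption, under which precritical leaves are pairwise disjoint, this will simultaneously force both assertions: such an ``interior'' leaf is unique at $p$, so at most three leaves meet at $p$, and when exactly three do, the middle one is precritical.

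First I would set up the local picture. Suppose $\ell_1,\ldots,\ell_k$ are distinct non-degenerate leaves of $\lam$ sharing a common endpoint $p\in\ucirc$, enumerated in cyclic angular order as they emanate from $p$ into $\disk$, and write $\ell_i=\ol{p a_i}$. Then on $\ucirc$ the points appear in the cyclic order $p,a_1,\ldots,a_k$, and for each $1\le i\le k-1$ the leaves $\ell_i$ and $\ell_{i+1}$ together with the subarc $[a_i,a_{i+1}]$ bound a closed region $R_i\subset\disk$ whose intersection with $\ucirc$ is exactly $[a_i,a_{i+1}]$; in particular $p\notin[a_i,a_{i+1}]$ and this arc lies at positive distance from $p$. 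The key claim is that each interior leaf $\ell_i$ with $2\le i\le k-1$ is precritical. Suppose, to the contrary, that $\ell_i$ is a limit leaf; then there is a sequence of precritical leaves $\ell^{(n)}\to\ell_i$ in the Hausdorff metric, which may be assumed distinct from $\ell_i$. Being unlinked with every leaf of $\lam$ and close to $\ell_i$, each $\ell^{(n)}$ cannot cross $\ell_{i-1}$ or $\ell_{i+1}$, hence lies in the closure of either $R_{i-1}$ or $R_i$; its endpoints therefore lie in $[a_{i-1},a_i]\cup[a_i,a_{i+1}]$. But Hausdorff convergence $\ell^{(n)}\to\ol{p a_i}$ would require an endpoint sequence converging to $p$, which is impossible since the relevant closed arcs of $\ucirc$ are bounded away from $p$. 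This contradiction establishes the claim.

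To finish, by Lemma~\ref{preimageleaves} (combined with the Non-Periodic Assumption) the precritical leaves of $\lam$ are pairwise disjoint, so at most one precritical leaf can have endpoint $p$. Since every interior leaf at $p$ is precritical, there is at most one interior leaf, forcing $k\le 3$; and when $k=3$, the middle leaf $\ell_2$ is the unique interior leaf and is precritical. The main obstacle, and the one calling for care, is the step identifying where the approximating precritical leaves $\ell^{(n)}$ must sit: it relies on $\ell^{(n)}$ being both close to $\ell_i$ (so lying in a small neighborhood of $\ell_i$ that meets only $R_{i-1}\cup R_i\cup\ell_i$) and unlinked with $\ell_{i-1},\ell_{i+1}$ (so committing to one definite side of $\ell_i$).
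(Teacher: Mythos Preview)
Your approach is essentially the paper's own: show that a leaf flanked on both sides at a common endpoint $p$ cannot be a limit leaf, by trapping the approximating precritical leaves in the wedge between its neighbors and reaching a contradiction. The organization (prove every interior leaf is precritical, then invoke disjointness of precritical leaves) is a clean way to get both conclusions at once.

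There is one concrete oversight. You assert that $R_i\cap\ucirc=[a_i,a_{i+1}]$, but since both bounding chords $\ell_i=\ol{p a_i}$ and $\ell_{i+1}=\ol{p a_{i+1}}$ contain $p$, the closed region $R_i$ contains $p$ as well; hence $R_i\cap\ucirc=\{p\}\cup[a_i,a_{i+1}]$. Consequently an approximating precritical leaf $\ell^{(n)}\subset \ol{R_{i-1}}\cup\ol{R_i}$ is allowed to have $p$ itself as an endpoint, and your contradiction (``the relevant closed arcs are bounded away from $p$'') does not yet follow. The repair is immediate and is exactly the extra alternative the paper's proof records: if infinitely many of the $\ell^{(n)}$ have $p$ as an endpoint, then these are distinct precritical leaves meeting at $p$, contradicting the Non-Periodic Assumption (precritical leaves are pairwise disjoint, as you already use via Lemma~\ref{preimageleaves}). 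After discarding at most one such $\ell^{(n)}$, the remaining ones have both endpoints in $[a_{i-1},a_{i+1}]$, which is genuinely bounded away from $p$, and your argument concludes as written.
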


\begin{proof} Suppose that four leaves $\ell_1,
\ell_2, \ell_3, \ell_4$ of $\lam$ meet at a point $a\in \ucirc$, and
assume that they are numbered so that the angle between $\ell_1$ and
$\ell_4$ taken in the positive direction is less than $\pi$ and the
leaves $\ell_2, \ell_3$ are contained in this angle.  By
Lemma~\ref{preimageleaves}, at least one of $\ell_2$ or $\ell_3$,
without loss of generality say $\ell_2$, is a limit leaf. Then it is
a limit of precritical leaves from at least one side.  This would
require a sequence of precritical leaves to meet, or to intersect
$\ell_1$ or $\ell_3$ but not in $\ucirc$, both of which are
impossible. So no more than three leaves of $\lam$ intersect at one
point, and the middle one is a precritical leaf by the above
argument.
\end{proof}

\begin{lem} \label{omegacnt}  If $K$ is an $\omega$-continuum in
$\lam$, then $K$ can meet only countably many leaves of $\lam$.
\end{lem}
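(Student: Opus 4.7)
The plan is to combine Lemma~\ref{contain}(1) with the three-leaves bound of Lemma~\ref{threeleavesmeet}. The key observation is that every leaf met by the $\omega$-continuum $K$ must have an endpoint in $K \cap \ucirc$ (apart from a trivial case), and then the bounded multiplicity at each point of $\ucirc$ turns countability of $K \cap \ucirc$ into countability of leaves meeting $K$.

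More precisely, I would first dispose of the degenerate case in which $K$ is contained in a single leaf $\ell \in \lam$. In that case the only leaves of $\lam$ meeting $K$ are $\ell$ itself together with any other leaves sharing an endpoint with $\ell$, and by Lemma~\ref{threeleavesmeet} at most three leaves meet at each of the two endpoints of $\ell$, giving a finite total.

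Otherwise, let $\ell \in \lam$ be any leaf with $K \cap \ell \neq \emptyset$. Since $K \not\subset \ell$, Lemma~\ref{contain}(1) forces $K$ to contain an endpoint of $\ell$. Thus every leaf met by $K$ contributes at least one of its endpoints to the set $K \cap \ucirc$, which is countable by the definition of an $\omega$-continuum. By Lemma~\ref{threeleavesmeet}, at most three leaves of $\lam$ can share any given point of $\ucirc$, so the map sending each leaf met by $K$ to one of its endpoints in $K \cap \ucirc$ is at most $3$-to-$1$. Hence the collection of leaves meeting $K$ has cardinality at most $3 \,|K \cap \ucirc|$, which is countable.

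There is no real obstacle here; the whole argument is bookkeeping once the two preceding lemmas are in hand. The only point to be careful about is not forgetting the case $K \subset \ell$ (where $K$ need not meet $\ucirc$ at all) and handling the hypothesis of Lemma~\ref{contain}(1) correctly, namely that its conclusion $K \subset \ell$ kicks in exactly when $K$ fails to contain an endpoint of $\ell$.
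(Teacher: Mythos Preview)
Your proof is correct and follows essentially the same approach as the paper: use Lemma~\ref{contain} to see that $K$ contains an endpoint of every leaf it meets, then invoke the three-leaf bound of Lemma~\ref{threeleavesmeet} together with countability of $K\cap\ucirc$. The paper's version is just terser, phrasing the case split as ``if $K$ meets more than one leaf'' rather than your ``$K\subset\ell$ versus not,'' but the content is the same.
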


\begin{proof}  Let $K$ be an $\omega$-continuum in $\lam$.  By definition,
$K\cap\ucirc$ is countable.  By Lemma~\ref{contain}, if $K$ meets more than one
leaf, then $K$ meets $\ucirc$ at an endpoint of each leaf it meets. Since at
most three leaves of $\lam$ can meet at a point, $K$ can meet only countably
many leaves, for otherwise its intersection with $\ucirc$ would be uncountable.
\end{proof}

\begin{thm}[Test for Non-Degeneracy]\label{thnonper}
Let $\ell_\ta$ be a critical diameter, and let $\lam^\ta_\iy=\lam$ be the
corresponding geometric lamination. If $\ell_\ta$ is not periodic then
$\sim_{\lam}$ is a non-degenerate invariant lamination.
\end{thm}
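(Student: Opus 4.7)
I would argue by contradiction. Suppose $\sim_\lambda$ is degenerate, so the unique class equals all of $\ucirc$, which is uncountable. The strategy is to show that each $\sim_\lambda$-class is in fact at most countable, producing the desired contradiction.

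Fix $x\in\ucirc$ and consider the combinatorial graph $\Gamma$ whose vertices are the endpoints of leaves of $\lambda$ lying on $\ucirc$ and whose edges are the leaves themselves. By Lemma~\ref{threeleavesmeet} every vertex of $\Gamma$ has degree at most three, so the connected component $C_x$ of $\Gamma$ containing $x$ has at most countably many vertices. I would then show $[x]\subset V(C_x)$: given $y\in [x]\setminus\{x\}$, pick an $\omega$-continuum $K$ with $x,y\in K$. Since $K\cap\ucirc$ is countable, $K$ contains no non-degenerate arc of $\ucirc$; combining this with the bounded-degree Lemma~\ref{threeleavesmeet} and Lemma~\ref{contain}, one gets that $K$ has a tree-like local structure at every point and is locally connected, hence a Peano continuum. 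Then $K$ is arcwise connected, and an arc in $K$ from $x$ to $y$ traverses countably many leaves (by Lemma~\ref{omegacnt}) and countably many $\ucirc$-points, which places $y$ in the combinatorial component $C_x$. Thus $[x]\subset V(C_x)$ is countable, forcing $[x]\ne\ucirc$ and contradicting degeneracy.

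For invariance of $\sim_\lambda$, I would show that $\sigma$ sends classes to classes using the extended map $\Sigma$ of Section~\ref{nonper}. Since $\Sigma$ maps $\Lstar$ into $\Lstar$ and $\Sigma^{-1}(\ucirc)=\ucirc$, the image $\Sigma(K)$ of any $\omega$-continuum $K$ is again an $\omega$-continuum, which gives $\sigma([x])\subset[\sigma(x)]$. The reverse inclusion uses the confluence of $\Sigma$: given an $\omega$-continuum $K'$ through $\sigma(x)$, the component of $\Sigma^{-1}(K')$ containing $x$ maps onto $K'$ and is itself an $\omega$-continuum.

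The main obstacle is the rigorous verification that every $\omega$-continuum $K$ is locally connected at each of its points. At a $\ucirc$-point of $K$ that is an accumulation point of other $\ucirc$-points of $K$, the local structure could \emph{a priori} fail to be tree-like if leaves attached at nearby vertices accumulate in a pathological way. Here the non-periodic assumption together with Lemmas~\ref{preimageleaves} and~\ref{threeleavesmeet} must be invoked carefully to rule out such accumulations and secure the Peano continuum property that underlies the arcwise connectivity argument.
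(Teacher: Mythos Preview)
Your invariance argument via $\Sigma$ and confluence is exactly what the paper does, so that part is fine.

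For non-degeneracy, however, your strategy is genuinely different from the paper's and has a real gap beyond the local-connectivity obstacle you flag. The paper never tries to bound the cardinality of a class; instead it performs a case analysis on whether $\ell_\ta$ is isolated, and in each case exhibits either an uncountable family of pairwise disjoint leaves or an infinite periodic gap whose intersection with $\ucirc$ is a Cantor set. Lemma~\ref{contain} then shows directly that certain pairs of points cannot be joined by an $\omega$-continuum.

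Your containment $[x]\subset V(C_x)$ can actually fail, and this is not just a matter of making the local-connectivity step rigorous. Consider an infinite concatenated chain of leaves $\ell_n=\overline{a_n a_{n+1}}$ with $a_n\to a_\infty$ and $\len(\ell_n)\to 0$ (nothing in the non-periodic setting forbids this: at each $a_n$ only two leaves meet, and neither need be precritical, so Lemmas~\ref{preimageleaves} and~\ref{threeleavesmeet} are satisfied). The set $K=\{a_\infty\}\cup\bigcup_n\ell_n$ is an $\omega$-continuum, so $a_\infty\sim_\lam a_1$. Yet since the lamination is closed and $\ell_n\to\{a_\infty\}$ in the Hausdorff sense, $a_\infty$ need not be the endpoint of any non-degenerate leaf; even if it is, there is no \emph{finite} leaf-path from $a_1$ to $a_\infty$ along this chain. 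Thus $a_\infty\notin C_{a_1}$ while $a_\infty\in[a_1]$. The point is that graph-components are defined by finite paths, whereas $\sim_\lam$-equivalence is a topological notion that allows transfinite concatenations and their limit points; your arc argument cannot bridge this, because an arc in $K$ reaching $a_\infty$ does so as a limit, not via an edge of $\Gamma$. So the bounded-degree graph argument, as stated, does not show classes are countable, and hence does not yield the contradiction.
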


\begin{proof} Let $x\sim_\lam y$ and $K\subset \Lstar$ be an $\om$-continuum
containing $x$ and $y$. Then $\Si(K)$ is an $\om$-continuum containing
$\si(x)$ and $\si(y)$. Hence $\si(\eq{x})\subset \eq{\si(x)}$. Let
$z\in\eq{\si(x)}$ and let $H$ be an $\omega$-continuum containing $z$ and
$\si(x)$. Since $\Si$ is confluent and $\si$ is 2-to-1, the component $C$
of $\Si^{-1}(H)$ which contains $x$ is an $\om$-continuum with $\Si(C)=H$.
So, $\si(\eq{x})=\eq{\si(x)}$ and $\si$-images of
$\sim_\lam$-classes are $\sim_\lam$-classes.

It remains to show that $\sim_\lam$ is non-degenerate. We achieve
this either by finding an uncountable collection of pairwise
disjoint leaves, or by carefully examining the boundary of a
periodic gap. There are two cases: either the critical leaf
$\ell_\ta$ is isolated in $\lam$ or it is not.

{\bf Case 1.}  Suppose first that $\ell_\ta$ is not isolated in
$\lam$. Then it is a limit on at least one side and hence, by the
symmetry of the construction, from both sides.   Since limit leaves
are limits of precritical leaves, it is a limit of other leaves of
the pre-lamination $\lam^\ta=\cup_n \lam_n$ from both sides. Hence,
between any two leaves of $\lam^\ta$, there is another leaf of
$\lam^\ta$. By Lemma~\ref{threeleavesmeet}, it follows that there is
an uncountable collection of disjoint leaves in the closure $\lam$
of $\lam^\ta$. By  Lemma~\ref{omegacnt}, $\sim_\lam$ is
nondegenerate.


 {\bf Case 2.} Consider next the case when $\ell_\ta$ is isolated. Then there exist
two gaps $G_1$ and $G_2$ (symmetric about $\ell_\ta$) such that
$G_1\cap G_2=\ell_\ta$.  Let $G=G_1\cup G_2$.  We will consider
three cases: either every leaf of $\bd G$ is a precritical leaf or
not, and in the latter case, either every leaf of $\bd G$ is a limit
leaf, or not.

{\bf Case 2a.} All leaves in $\bd G$ are precritical leaves.  Then
either $G_1$ or $G_2$ maps onto itself under the first iterate
$\si^k$ which takes a precritical leaf in $\bd G_i$ to $\ell_\ta$.
Renaming, if needed, $G_1$ maps onto itself.  Since precritical
leaves are disjoint, and $G_1$ is periodic, pulling $G_1$ back
through its orbit, we see that the leaves of $\bd G_1$ are infinite
in number and pairwise disjoint.  Hence, $G_1\cap\ucirc$ is a Cantor
set.  By Lemma~\ref{contain}, any two points of $G_1\cap\ucirc$
which are not the endpoints of a leaf cannot be joined by an
$\omega$-continuum, so $\sim_\lam$ is non-degenerate.

{\bf Case 2b.} All leaves in $\bd G$ are limit leaves. Then every
leaf of $\bd G$ must be a limit leaf from exactly one side.  Pulling
$G$ back, we see that between any two preimages of $G$ there are
limit leaves, so by Lemma~\ref{threeleavesmeet}, pre-images of $G$
are disjoint. Moreover, limit leaves are actually limits of $G$
(since $\ell_\ta$ is within $G$). As in Case 1, because of the limit
leaves, we have pre-images of $G$ between any two pre-images of $G$.
Since the pre-images of $G$ are disjoint, this gives us an
uncountable collection of leaves in $\lam$.  So again, $\sim_\lam$
is nondegenerate.

{\bf Case 2c.}  There are both precritical leaves and
non-precritical limit leaves in $\bd G$.  For each precritical leaf
$\ell_i$ in $\bd G$, there is a preimage of $G$ sharing that leaf
with $G$.  Let $G_\iy$ be the component of $G$ in the union of all
preimages of $G$ in $\lam$.  Then $\bd G_\iy$ contains only limit
leaves.  Moreover, because $\lam$ does contain limit leaves, $G_\iy$
is not all of $\lam$.  Because of the limit leaves in preimages of
$G_\iy$, we can now argue as in Case 2b that between any two
preimages of $G_\iy$ there is another preimage of $G_\iy$. Since the
pre-images of $G_\iy$ are disjoint, this gives us an uncountable
collection of leaves in $\lam$.  So again, $\sim_\lam$ is
nondegenerate.
 \end{proof}

\section{Renormalization of Dendrites}\label{per}

A dendrite is a locally connected continuum which does not contain a
subset homeomorphic to the unit circle. Let $X$ be a dendrite. Let
$[x, y]$ be the (unique) closed arc in $X$ connecting $x$ and $y$
(similarly we define open and semi-open arcs $(x, y), [x, y), (x,
y]$).  It is well-known that every subcontinuum of a dendrite is a
dendrite and that dendrites have the fixed point property. (See
\cite[Chapter X]{nad} for further results about dendrites.) A set
$A\subset X$ is said to be \emph{condense} in $X$ if $A$ is
\emph{dense} in each \emph{continuum} $K\subset X$. The notion has
been introduced in \cite{BOT05} in a very different setting (in
\cite{BOT05} we study, for some compact and $\sigma$-compact spaces,
how big the set of points with exactly one preimage should be to
guarantee that the map is an embedding or a homeomorphism). Also,
given a closed set $P\subset X$ let the \emph{continuum hull} $T(P)$
of $P$ be the smallest continuum in $X$ containing $P$ (in
particular, if $P=\{x, y\}$ is a two-point set then $T(x, y)=[x,
y]$, and, more generally, if $P$ is finite than $T(P)$ is a
\emph{tree}, i.e. a one-dimensional branched manifold). For any
connected topological space $Y$ a point $y\in Y$ is said to be a
{\em cutpoint} of $Y$ iff $Y\setminus \{y\}$ is not connected and an
\emph{endpoint} of $Y$ otherwise. Also, the number of components of
$Y\setminus \{y\}$ is said to be \emph{valence} of $y$ (in $Y$) and
points of valence greater than two are said to be \emph{branch
points} or \emph{vertices} of $X$. Given a map $f:X\to X$, a set $Z$
is said to be \emph{periodic (of period $m$)} if $Z, f(Z), \dots,
f^{m-1}(Z)$ are pairwise disjoint and $f^m(Z)\subset Z$. Now we are
ready to prove Theorem~\ref{TH AlphaFixPt}.

\begin{thm}\label{TH AlphaFixPt}
Let $f:X \to X$ be a continuous self-mapping of a dendrite $X$ with
no wandering continua and no identity return. Then $f$ has non-fixed
critical cutpoints. Moreover, if $f$ is a finite-to-one map with
finitely many critical points then it has fixed cutpoints and for
all $n$, there exists no interval $I$ such that $f^n|_I$ is a
one-to-one map  (in particular, all preimages of critical points are
condense in $X$). Finally, if $f$ has exactly one critical point $c$
then $f$ has a fixed cutpoint $a\in (c, f(c))$.
\end{thm}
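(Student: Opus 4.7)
The plan is to prove the three conclusions in order, using the common technique of selecting an arc in $X$ on which an iterate of $f$ is injective, iterating it forward, and combining no wandering continua with no identity return.

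For Claim~1, I argue by contradiction: suppose every critical cutpoint is fixed. Then $f$ is locally injective at every non-fixed cutpoint. The set $F$ of fixed critical cutpoints is closed; if it contained an arc, $f$ would be the identity on that arc, violating no identity return, so $F$ contains no arc. Pick an arc $I$ disjoint from $F$; then $f|_I$ is locally injective, hence injective on some subarc. Assuming $f^k(I)$ avoids critical cutpoints for all $k$ (if not, shrink $I$ and restart), $f^k|_I$ is injective and each $f^k(I)$ is a nondegenerate arc. No wandering continua gives $i<j$ with $f^i(I) \cap f^j(I) \neq \emptyset$; setting $J = f^i(I)$ and $g = f^{j-i}$, I get a homeomorphism $g|_J$ with $g(J) \cap J \neq \emptyset$. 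From this I extract a subarc on which some iterate of $g$ is the identity, contradicting no identity return.

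For Claim~2, I first show no arc $I$ has $f^n|_I$ one-to-one. If such $I$ exists, $I$ is disjoint from $C^* := \bigcup_{k \ge 0} f^{-k}(\mathrm{Crit})$, and the argument for Claim~1 gives an identity-return contradiction. Therefore $C^*$ meets every arc, hence every continuum densely, proving it is condense. For the fixed cutpoint, I use Claim~1 to obtain a non-fixed critical cutpoint $c$ and apply a retraction argument on $[c, f(c)]$ (similar to the one below for Claim~3), with the finite-to-one hypothesis ensuring the fixed point of $\phi := \pi \circ f$ is a genuine $f$-fixed point.

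For Claim~3, with exactly one critical point $c$, $f$ is locally injective on $X \setminus \{c\}$ and hence injective on $(c,f(c)]$. Let $\pi : X \to [c,f(c)]$ be the first-point retraction and $\phi := \pi \circ f : [c,f(c)] \to [c,f(c)]$; by the fixed-point property of arcs, $\phi$ has a fixed point $a$. Now $\phi(c) = f(c) \neq c$, so $a \neq c$. To rule out $a = f(c)$, observe that $\phi(f(c)) = f(c)$ would place $f^2(c)$ in a branch attached at $f(c)$; pulling this branch back through the injective iterates of $f$ on $(c,f(c)]$ produces a wandering continuum. Hence $a \in (c,f(c))$; the same pullback argument shows $f(a) \neq a$ is impossible, so $f(a) = a$ gives the desired fixed cutpoint in $(c,f(c))$. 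The main obstacle throughout is the identity-return extraction in Claim~1: turning $g(J) \cap J \neq \emptyset$ with $g|_J$ a homeomorphism into an arc fixed pointwise by some iterate of $g$ requires careful analysis of how nested injective images of arcs overlap in a dendrite.
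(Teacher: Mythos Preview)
Your proposal has a genuine gap at exactly the place you flag: the ``identity-return extraction'' in Claim~1. Having a homeomorphism $g|_J$ with $g(J)\cap J\ne\emptyset$ does \emph{not} by itself yield an arc on which an iterate of $g$ is the identity; an expanding fixed point on an arc gives overlap without any identity return, and the no-wandering hypothesis alone does not close this. Your ``shrink $I$ and restart'' step is also unjustified: with a fixed critical cutpoint $c$ present, there is no a~priori reason the orbits of all subarcs of $I$ eventually miss $c$. The paper circumvents both issues by a structurally different argument. It first disposes of the interval case directly, then reduces to the situation where there is at most one critical cutpoint $v$ (necessarily fixed) or $f$ is a homeomorphism (take $v$ any fixed point). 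It then looks at the components $\{I_\alpha\}$ of $X\setminus\{v\}$: each maps homeomorphically into some $I_\beta$, no wandering forces $f^m(I_\alpha)\subset I_\alpha$ for some $\alpha,m$, and a retraction/fixed-point argument produces an $f^m$-fixed point $b\ne v$ in $I_\alpha$. Then $f^m$ maps $[v,b]$ homeomorphically onto itself, and the already-handled interval case gives the contradiction. No ``extraction of identity return from overlap'' is needed.

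For Claim~2 your argument for the existence of a fixed cutpoint is too thin: ``the finite-to-one hypothesis ensures the fixed point of $\pi\circ f$ is genuine'' is not an argument, and in fact the paper does something quite different. It assumes all fixed points are endpoints, uses finiteness of critical points to see that endpoints are not critical, shows each fixed endpoint is locally repelling and hence isolated, excises small one-sided neighborhoods of all fixed points, and retracts to obtain a fixed-point-free self-map of a dendrite, contradicting the fixed-point property. Your route via $[c,f(c)]$ does not obviously produce a \emph{cut}point without this analysis. For Claim~3 your ``pulling back produces a wandering continuum'' is again a placeholder where the paper does real work: it shows via a concatenation argument that $A=[c,f(c)]$ and $f(A)$ share a nondegenerate subarc $[f(c),d]$ (ruling out $R(f^2(c))=f(c)$, hence $a\ne f(c)$), and then, if the retracted fixed point $a$ were not $f$-fixed, deduces $a=d$ and runs the same concatenation argument on $[u,d]$ to force a wandering subinterval. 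You should supply these concatenation steps rather than gesture at them.
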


\begin{proof} Let us prove the first claim. In the interval case
it is obvious (if $f:I\to I$ is an interval map without critical points then it
is easy to see that either $f$ has a wandering interval or $f$ has an interval
of identity return). Hence there are no periodic intervals on which $f$ would
not have a critical point; we often use this argument in the future.

If the map $f$ collapses an interval, then there are non-fixed
critical cutpoints. Hence we may assume that the closed set $C_f$ of
all critical points of $f$ is totaly disconnected.  Assume that all
critical cutpoints of $f$ (if any) are fixed. Now, if there are at
least two critical cutpoints then we can choose critical points
$c_1\ne c_2$ so that $(c_1, c_2)$ contains no critical points of $f$
(just consider $[a, b]$ with $a, b\in C_f$ and choose an arc in it
complementary to $C_f\cap [a, b]$). Then $f:[c_1, c_2]\to [c_1,
c_2]$ is a homeomorphism, a contradiction. So we may assume that
there is at most one fixed critical cutpoint. Now, if $f$ is not a
homeomorphism then there exist points $x\ne y$ such that
$f(x)=f(y)$. Then there must exist a critical cutpoint $c\in (x,
y)$. Thus the only two cases to consider are (a) when $f$ is a
homeomorphism, and (b) when $f$ has a unique critical cutpoint $c$,
and $c$ is a fixed point.

Let $v$ be a fixed point of $f$ in  case (a) and $c$ in  case (b).
Let $\{I_\al\}_{\al\in A}$ be the family of closures of components
of $X\setminus \{v\}$. Then for each $\al\in A$ there exists $\be\in
A$ such that the restriction $f|_{I_\al}$ is a homeomorphism into
$I_\be$. Since there are no wandering continua, $f^m(I_\al)\subset
I_\al$ for some $\al\in A$ and $m>0$. Choose a point $x\in
I_\al\setminus \{v\}$ and consider the interval $[v, x]\cap [v,
f^m(x)]=[v, a']=I$. Consider two cases depending on the location of
$f^m(a')$. If $f^m(a')\in I$ then $f^m$ maps $I$ into itself
homeomorphically which is impossible. Suppose that $f^m(a')\nin I$
and consider the component $J$ of $X\setminus \{a'\}$ containing
$f^m(a')$. Denote the retraction of $X$ onto $J$ (which maps
$X\setminus J$ to $a'$) by $R$ , consider the map $g=R\circ f^m:J\to
J$, and let $b$ be a fixed point of $g$. It follows that $b\ne a'$,
hence $b$ in fact is a fixed point of $f^m$ too. Since $b\ne v$ we
see that $[v, b]$ is a non-degenerate interval mapped onto itself by
$f^m$ homeomorphically, a contradiction. Hence there exist non-fixed
critical cutpoints of $f$.

Now  we restrict ourselves to maps $f$ with finitely many critical points.
 Under our assumptions we can show that $f$ has a fixed cutpoint. Indeed,
assume otherwise. Then all fixed points of $f$ are endpoints of $X$.
Let $b$ be a fixed point of $f$.  It is easy to see that in a
dendrite with finitely many critical points, and endpoint cannot be
a critical point. Hence, there exists a connected neighborhood
$U=U_b$ of $b$ on which $f$ is one-to-one.
 Observe that if now
$U$ contains another fixed point $s$ of $f$ then $f: [s, b]\to [s,
b]$ is a homeomorphism which contradicts the assumptions. So fixed
points form a closed set of isolated points, hence there are
finitely many of them.

Denote the set of all fixed points of $f$ by $B$. Let $b\in B$ and
let $U_b$ be a neighborhood chosen as above. Choose an interval
$I\subset U_b$ with one endpoint $b$ and consider the interval
$f(I)\cap I=[b, d]$. Then choose a point $y\in I$ so that $f(y)=d$.
Since $f$ has no wandering intervals it follows that $[b, y]\subset
[b, d]$ (otherwise $[b, y]$ maps homeomorphically into itself and
has a wandering interval). In other words, the point $y$ is repelled
away from $b$ by $f$. Since there are no fixed points in $(b,y]$ it
implies that all points of $(b, y]$ are repelled away from $b$. We
can choose $y=y_b$ very close to $b$ so that $y$ is not a vertex of
$X$ (it is known that $X$ can have no more than countably many
vertices \cite[10.23]{nad}). Denote by $V_b$ the component of
$X\setminus \{y_b\}$ containing $b$. Clearly, this can be done for
all fixed points of $f$ so that for distinct fixed points $b, q$ the
neighborhoods $V_b, V_q$ are disjoint and, moreover, their
$f$-images are disjoint. Consider now the dendrite $Y=X\setminus
\cup_{b\in B} V_b$ and define the retraction $R:X\to Y$ (by
collapsing all points of every $V_b, b\in B$ into $y_b$ and keeping
the identity map on $Y$). Then define the map $g=R\circ f: Y\to Y$.
By the construction no point $y_b$ is $g$-fixed. On the other hand,
$B$, the set of all $f$-fixed points, is disjoint from $Y$. Hence
$g$ is a fixed-point-free map on the dendrite $Y$, a contradiction.
This implies that $f$ \emph{must} have at least one fixed cutpoint.

Let us prove that for all $n$, there exists no such interval $I$
that $f^n|_I$ is a 1-to-1 map.  It then would follow that the set of
pre-critical points is condense. Suppose otherwise and assume that
$I$ is closed. Consider the orbit $Q''=\cup^\iy_{j=0} f^j(I)$ of
$I$. Since $I$ is not wandering, there exist $k$ and $k+l$ such that
$f^k(I)\cap f^{k+l}(I)\ne \0$. Then we can consider the set
$Q=\ol{\cup^\iy_{j=0} f^{jl}(f^k(I))}$. It follows that $Q$ is a
subdendrite of $X$ such that $f^l(Q)\subset Q$. Observe that all the
assumptions of the theorem hold for $f^l|_Q$, hence the results of
the previous paragraph apply to $f^l|_Q$ and there exists a
$f^l$-fixed cutpoint $a$ in $Q$. By the construction it follows that
some power of $I$ contains $a$, so we may assume from the very
beginning that $a\in I$. Moreover, replacing $f$ by $f^l$ and $X$ by
$Q$ we may assume that $a$ is a fixed cutpoint of $X$ and $I=[a, b]$
is an interval such that for all $n$, $f^n|_I$ is one-to-one.

Let us show that then we may assume that there exists $r$ such that
an image of a small interval $Z=[a, d]\subset I$ maps back over
itself by $f^r$ so that points are repelled away from $a$ within
$Z$. Indeed, suppose there were no such interval $Z$.  Then the
successive images of $I$ would meet only in the fixed cutpoint $a$.
Hence a small interval bounded away from $a$ in $I$ would wander, a
contradiction. Hence, we may assume that there is an $r$ such that
$f^r(Z)\supset Z$. Consider $Z_\infty=\cup^\iy_{i=0} f^{ri}(Z)$.
Then the assumptions of the theorem apply to the dendrite
$Q'''=\ol{Z_\infty}$ and $f^r:Q'''\to Q'''$, and imply that there
exists a critical cutpoint of $f^r|_{Q'''}$ and that $f^r|_{Q'''}$
is not one-to-one.  Note that because $Z=[a,d]$ maps over itself
one-to-one under $f^r$, $f^r$ is one-to-one on $Z_\infty$. Since
closure can only introduce endpoints of $Q'''$ to $Z_\infty$, there
are endpoints $x'\ne y'\in Q'''$ such that $f^r(x')=f^r(y')$. But
then by continuity, there are non-endpoints $x\not= y\in
Z_\infty\subset Q'''$, such that $f^r(x)=f^r(y)$, a contradiction
with $f^r$ being one-to-one on $Z_\infty$.

To prove the rest of the theorem we prove a series of claims
assuming that $f$ has a unique critical point $c$. By the first
claim $c$ is a cutpoint and $f(c) \ne c$; set $A=[c, f(c)]$. Then
$f^2(c)\nin A$ (otherwise, there is a fixed critical cutpoint in
$(c,f(c))$, contradicting our assumptions). Consider the interval
$f(A)=[f(c), f^2(c)]$ and show that the point $f(c)$ cannot belong
to the interval $[c, f^2(c)]$. Suppose otherwise. Then $A$ and
$f(A)$ are concatenated (have only $f(c)$ in common) and $f|_{[c,
f^2(c)]}=f|_{A\cup f(A)}$ is a homeomorphism which implies that
$f(A)$ and $f^2(A)$ are concatenated (have only $f^2(c)$ in common),
etc. By induction all the images of $A$ form a concatenated sequence
of intervals mapped on each other homeomorphically - i.e., in a
sequence of intervals $A, f(A), f^2(A), \dots$ the consecutive
intervals have only one endpoint $f^i(c)$ in common.  However, then
a small  subinterval of $A$ is a wandering continuum, a
contradiction. Hence $A$ and $f(A)$ have a non-degenerate
intersection.

Let $[f(c), d]=A\cap f(A), d\ne f(c)$. Let $R$ be the monotone
retraction of $X$ onto $A$. Consider a map $g=R\circ f:A\to A$.
Denote by $a$ a fixed point of $g$. Then $a\ne c$ and $a\ne f(c)$.
Let us show that in fact $f(a)=a$. Indeed, suppose otherwise. Then
$f(a)\nin A$ and hence $f(a)\in [d, f^2(c)]$. This implies that
$R(f(a))=d=a$ and so the interval $f([c, d])=[f(c), f(d)]$ contains
$d$. Choose a point $u\in [d, c]$ so that $f(u)=d$ and set $B=[u,
d]$. Then the interval $f(B)=[d, f(d)]$ is concatenated with the
interval $B$ at their common endpoint $d$, and applying the same
arguments as before we can see, that this kind of dynamics is
impossible under the assumption that $f$ has no wandering continua.
Hence $f(a)=a\in (c,f(c))$ is a fixed cutpoint as desired.
\end{proof}

Lemma~\ref{TH PerPropts} shows that to study the remaining case when a critical
diameter has a periodic endpoint we need to study maps of dendrites.

\begin{lem}\label{TH PerPropts}
Let $\sim$ be a non-degenerate invariant lamination with a critical class
$C$. Then $f_\sim:J_{\sim} \to J_{\sim}$ has exactly one critical point which
is the image of $C$ under the quotient map $p$. Moreover, if
$C$ contains a preperiodic point of $\si$ then $J_{\sim}$ is a dendrite.
\end{lem}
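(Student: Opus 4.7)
The plan is to handle the two assertions separately, invoking Lemma~\ref{REM *} for the uniqueness of the critical point and Proposition~\ref{TH NoWandDom or IdRet} for the dendrite conclusion.

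\emph{Uniqueness of the critical point.} By Lemma~\ref{REM *}, $C=C'$ and $\ch(C)$ contains a diameter whose two endpoints lie in $C$; this diameter splits $\ucirc$ into open semicircles $S_1,S_2$. Any class $D\ne C$ is disjoint from $C$ (hence from the diameter endpoints) and unlinked with the diameter, so $D$ lies entirely in one $S_i$. Thus $S_1,S_2$ are $\sim$-saturated and $p(S_i)$ is open in $J_\sim$. Given $x=p(D)$ with $D\subset S_1$, suppose classes $E_1,E_2\subset S_1$ satisfy $f_\sim(p(E_1))=f_\sim(p(E_2))$. Then $E_1\cup E_2\subset \si^{-1}(F)$ for some class $F$, which by Lemma~\ref{REM *} is either $C$ (impossible, as $C\not\subset S_1$) or $G\cup G'$ with $G,G'$ in opposite semicircles, forcing $E_1=E_2$. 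Thus $f_\sim|_{p(S_1)}$ is injective and $x$ is not critical. To see that $p(C)$ \emph{is} critical, set $\tau(t)=t+1/2$: the involution $\tau$ pairs each non-critical class with its sister and fixes $C$ setwise. Given any open $W\ni p(C)$ in $J_\sim$, the set $U^*=p^{-1}(W)\cap\tau(p^{-1}(W))$ is an open, saturated, $\tau$-invariant neighborhood of $C$. As $C$ is nowhere dense in $\ucirc$, $U^*$ must contain some class $D\ne C$, and then $D'=\tau(D)\subset U^*$ as well. The distinct points $p(D)\ne p(D')$ lie in $W$ and are identified by $f_\sim$, so $f_\sim|_W$ is not injective and $p(C)$ is the unique critical point.

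\emph{The dendrite conclusion.} Assume $x_0\in C$ has preperiodic $\si$-orbit; then $p(C)$ is preperiodic under $f_\sim$, with finite forward orbit contained in $J_\sim$. Since $J_\sim$ is a locally connected continuum in $\sphere$, it is a dendrite if and only if $\sphere\setminus J_\sim$ is connected. Suppose, for contradiction, there is a bounded component $U_0$ of $\sphere\setminus J_\sim$. By Proposition~\ref{TH NoWandDom or IdRet}(1), $\hf_\sim$ has no wandering domains, so some iterate $V=\hf_\sim^k(U_0)$ is periodic of period $m$. The return map $g=\hf_\sim^m|_V$ is a proper holomorphic self-map of the simply connected disk $V$. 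If $\deg g\ge 2$, then $g$ has a critical point in $V$; but the only finite critical point of $\hf_\sim$ is $p(C)\in J_\sim$, which is disjoint from the open set $V$. Hence $\deg g=1$ and $g$ is a disk automorphism.

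If $g$ is elliptic, Kiwi's standing assumption forbidding irrational neutral cycles forces the rotation at its interior fixed point to be rational; some iterate $g^N=\hf_\sim^{mN}|_V$ is the identity, and by continuity $\hf_\sim^{mN}|_{\partial V}=\id$. But $\partial V\subset J_\sim$ is a nondegenerate continuum, giving an identity return of $f_\sim$ and contradicting Proposition~\ref{TH NoWandDom or IdRet}(2). If $g$ is parabolic, Fatou's theorem for the degree-$2$ polynomial-like map $\hf_\sim$ requires the parabolic cycle to absorb a critical orbit into its open basin; the only candidate, the orbit of $p(C)$, lies entirely in $J_\sim$ and never enters the open disk $V$, again a contradiction. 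The remaining hyperbolic subcase does not occur for a Fatou component of such a map (the would-be attracting boundary fixed point would attract a neighborhood in $\sphere$, placing it in the Fatou set, contradicting its membership in $\partial V\subset J_\sim$). So no bounded complementary domain exists and $J_\sim$ is a dendrite. I expect the main obstacle to be the elliptic subcase, which genuinely requires combining Kiwi's no-irrational-neutral hypothesis with the novel no-identity-return conclusion of Proposition~\ref{TH NoWandDom or IdRet}(2); the parabolic subcase then follows from Fatou's theorem on the location of critical orbits.
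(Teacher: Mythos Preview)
Your argument for the first assertion is correct and arguably cleaner than the paper's: instead of constructing convergent sequences, you exploit directly that the two open semicircles cut out by a diameter of $\ch(C)$ are $\sim$-saturated, so $f_\sim$ is injective on each of their images, while the involution $\tau(t)=t+\tfrac12$ produces arbitrarily close pairs identified by $f_\sim$ near $p(C)$.

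The second assertion, however, rests on a false premise. The extension $\hf_\sim:\sphere\to\sphere$ built in Section~\ref{invlam} is only a \emph{topological} branched covering of degree~$2$; it is not holomorphic, and the underlying lamination $\sim$ is abstract, not assumed to come from a polynomial. Consequently there is no conformal classification of $g=\hf_\sim^{\,m}|_V$ into elliptic/parabolic/hyperbolic automorphisms, no Fatou theorem forcing a critical orbit into a parabolic basin, and no ``Kiwi standing assumption forbidding irrational neutral cycles'' to invoke. All three subcases of your trichotomy therefore collapse: once you know $g$ is a homeomorphism of the topological disk $\ol V$, you have used up the available structure, and the holomorphic tools you appeal to simply do not exist here.

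The paper's route is purely topological. Having $g=\hf_\sim^{\,m}|_{\ol H}$ a homeomorphism, one uses Proposition~\ref{TH NoWandDom or IdRet} (no wandering continua, no identity return) to conclude that $g|_{\bd H}$ is conjugate to an \emph{irrational} rotation of the circle $\bd H$. Then an expansion argument on $\si$ (the paper cites \cite[Lemma~18.8]{miln}) shows that for some $i$ the restriction $\si|_{p^{-1}(\bd f^i(H))}$ fails to be injective; since $c=p(C)$ is the unique critical point, this forces $C\subset p^{-1}(\bd f^i(H))$, i.e.\ $c\in\bd f^i(H)$. But points on a circle carrying an irrational rotation are never preperiodic, contradicting the hypothesis on $C$. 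The key idea you are missing is precisely this passage through $\si$ and the expanding dynamics on $\ucirc$, which substitutes for the unavailable holomorphic machinery.
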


\begin{proof} We can find sequences $x_i\to x, x'_i\to x'$ with $x\ne x'\in C$
so that $x_i\not \sim x'_i$ for any $i$ and $\si(x_i)=\si(x'_i)$. Indeed,
choose $x, x'\in C$ so that $\si(x)=\si(x')$. Points of $\ucirc$ separated by the chord
connecting $x$ and $x'$ cannot be $\sim$-equivalent unless they belong to $C$,
hence we can choose the desired
sequences. If $p(C)=c$ then $p(x_i)\to c, p(x'_i)\to c, p(x_i)\ne p(x'_i)$ and
$f_{\sim}(p(x_i))=f_{\sim}(p(x'_i))$ which implies that $c$ is a critical point
of $f_\sim$. On the other hand, if $Q$ is a non-critical class then $p(Q)$ is
not a critical point of $f_\sim$. Indeed, suppose otherwise. Then we can choose
a sequence of pairs of points $y_i, y'_i\to p(Q)$ in the quotient space
$J_\sim$ so that $f_\sim(y_i)=f_\sim(y'_i)$. Then we can choose two converging
sequences of points $z_i\ne z'_i\in \ucirc$ such that $p(z_i)=y_i, p(z'_i)=y'_i$ and
$\si(z_i)=\si(z'_i)$. Let $z_i\to z, z'_i\to z'$. Then $\si(z)=\si(z')$ and
$z\ne z'$ (the latter follows from the fact that the arcs between $z_i, z'_i$
are actually semicircles). On the other hand, $y_i\to p(Q), y'_i\to p(Q)$ and
hence $z, z'\in Q$, a contradiction with $Q$ being non-critical. Hence if
$\sim$ has a critical class $C$ then $f_\sim:J_\sim \to J_\sim$ has a unique
critical point $c=p(C)$.

Let $\hf_\sim=\hf: \sphere \to \sphere$ be an orientation preserving
branched covering map of degree $2$ extending $f$ (see
\cite{kiwi97}); then its unique finite critical point must coincide
with $c$. If $J_\sim$ is not a dendrite then by Proposition~\ref{TH
NoWandDom or IdRet} we may assume that there exist a bounded
complementary to $J_\sim$ domain $H$ and a number $m$ such that
$\hf^m(H)=H$. Since $c$ is a unique critical point of $\hf$, we see
that $\hf^m|_H$ is a homeomorphism. Since there are no wandering
continua or identity returns for $f_\sim$ then in fact $\hf^m|_{\bd
H}$ is an irrational rotation. Consider the set
$p^{-1}(\cup_{i=0}^{m-1} \bd f^i(H))$. Then by \cite[Lemma
18.8]{miln}, since  $\si$ is locally expanding on $\ucirc$ and the
fact that the boundary of $H$ is infinite imply that for some $i$
the restriction $\si|_{p^{-1}(\bd f^i(H))}$ \emph{is not}
one-to-one. Since downstairs $f|_{f^i(\bd H))}$ \emph{is}
one-to-one, this means that the critical class $C$ is contained in
$p^{-1}(\bd f^i(H))$ and hence $c\in \bd(f^i(H))$, a contradiction
to $c$ being preperiodic.
\end{proof}

Now we consider the most specific cases in this subsection. As in
Lemma~\ref{TH PerPropts}, we work with induced maps of laminations,
sometimes with the extra assumption that the critical point is
periodic. However we only rely upon the dynamical properties of
induced maps. Thus, given a dendrite $J$ we denote by $\ty(J)$ the
family of all $2$-to-$1$ branched covering self-mappings of $J$
which have no wandering continua and no identity return.  Note that
all such maps are open, hence confluent.  Denote the family of all
such maps by $\ty$ if the dendrite is not fixed. For $f\in \ty$ we
denote by $c_f=c$ its unique critical point.

By Proposition~\ref{TH NoWandDom or IdRet} and Lemma~\ref{TH
PerPropts} induced maps of laminations belong to $\ty(J_\sim)$ if
$J_\sim$ is a dendrite (i.e., if $C$ contains a preperiodic point).
We will show that the unique critical point of $J_\sim$ cannot admit
a certain type of dynamics, called a {\em snowflake} and defined
below (see Lemma~\ref{le-version}.  For this purpose we introduce
the notion of a {\em rotational renormalization} $F_1$ of $f$ (see
Lemma~\ref{le-Rinf} and paragraphs following).

 Suppose that $f\in \ty(J)$. For $x\in J\setminus c$ let $x'$
be the unique point of $J$ such that $x'\ne x, f(x')=f(x)$, and set
$c'=c$. Then the map $x\mapsto x'$ is a continuous involution of
$J_\sim$. From now on given a point $z\in J$ (a set $A\subset J$) by
$z'$ ($A'$) we mean the image of $z$ (of $A$) under this involution.
Clearly, for any $x$ we have $c\in [x, x']$. Also, the family of all
$f\in \ty(J)$ with $c$ is periodic is denoted by $\tp(J)$ or just
$\tp$ (if the dendrite is not fixed).

Fix $f\in \ty(J)$. By Theorem~\ref{TH AlphaFixPt} there exists a
fixed cutpoint $a\in (c, f(c))$. Denote the component of $J\setminus
\{a\}$ containing $c$ by $K$; then $f(c)\nin \ol{K}$. This implies
that the fixed cutpoint of $f$ is unique. Indeed, suppose that $b\ne
a$ is another fixed cutpoint of $f$. Then $c\in [a, b]$ for
otherwise $f: [a, b]\to [a, b]$ is a homeomorphism. Denote by $K'$
the component of $J\setminus \{b\}$ containing $c$. Consider all
other components of $J\setminus \{b\}$. The fact that $f$ is a local
homeomorphism at $b$ implies that these components of $J\setminus
\{b\}$ have images disjoint from $K'$. Since $f$ has no wandering
continua we can find a component $H$ of $J\setminus \{b\}$
homeomorphically mapping into itself by some $f^m$ which is
impossible by Theorem~\ref{TH AlphaFixPt}. The unique fixed cutpoint
of $f$ is denoted by $a_f=a$.

Now we need the notion of a pullback. Given a map $f\in
\ty(J)$, a continuum $Q\subset J$, a point $x\in J$ and a number $n$ such that
$f^n(x)\in Q$ we call the component $V$ of $f^{-n}(Q)$ containing $x$ the
\emph{pullback of $Q$ along $x, \dots, f^n(x)$}. Since $f\in \ty(J)$ is a
branched covering map and $J$ is a dendrite, it follows that the map $f^n$ maps
$V$ onto $Q$ as a branched covering map, and the degree of $f^n|_V$ equals
$2^s$ where $s$ is the number of times the images $V, f(V), \dots, f^{n-1}(V)$
of $V$ contain $c$ as a cutpoint. This notion is usually used for rational maps, but it can
also be defined in our setting.

By Theorem~\ref{TH AlphaFixPt} $f(c)\ne a$. Then there exists the least $m_f=m$
with $f^m(c)\in K$ (otherwise the continuum hull $T(a\cup \ol{\orb(c)})$ is a
non-degenerate dendrite mapped into itself homeomorphically, a contradiction to
Theorem~\ref{TH AlphaFixPt}). Clearly, $c\in (a, a')$. Consider the closure $R$
of the component of $J\setminus \{a, a'\}$ containing $c$. In Lemma~\ref{le-R1}
we describe the set $R_1=R\cap f^{-m}(R)$ of all points of $R$ mapped back into
$R$ by $f^m$.

\begin{lem}\label{le-R1} One of the following two possibilities holds.

\begin{enumerate}

\item If $f^m(c)\nin R$ then $R_1=V\cup V'$ where $V$ and $V'$ are
disjoint continua, $f^m(V)=f^m(V')=R$ and both $f^m|_V$ and
$f^m|_{V'}$ are homeomorphisms onto $R$.

\item If $f^m(c)\in R$ then $R_1$ is a dendrite and $f^m:R_1\to R$ is a
$2$-to-$1$ branched covering  map whose unique critical point is
$c$.

\end{enumerate}

\end{lem}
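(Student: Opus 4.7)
The plan is to exploit the minimality of $m=m_f$: since $m$ is the least integer with $f^m(c)\in K\supset R$, the critical iterates $f(c), f^2(c), \ldots, f^{m-1}(c)$ all lie outside $R$. Hence $f^m(c)$ is the only critical value of $f^m$ that could possibly lie in $R$, and it does so exactly in case (2). This single fact controls the branching of $f^m$ over $R$ and is the essential ingredient for reading off the topology of $R_1$.

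First, I would analyze $f^m|_W\colon W\to R$ for each component $W$ of $f^{-m}(R)$. Since $f$ is an open branched cover, hence confluent, $f^m(W)=R$. The critical values of $f^m|_W$ are those $f^{m-i}(c)$ with $c\in f^i(W)$ for some $0\le i<m$; by minimality of $m$ and $f^m(W)=R\subset K$, such an $i$ must be $0$. So in case (1), every component $W$ of $f^{-m}(R)$ maps homeomorphically onto $R$; in case (2), the unique $W$ containing $c$ is a $2$-to-$1$ branched cover of $R$ ramified at $c$, and all other components still map homeomorphically onto $R$.

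Second, I would determine which components of $f^{-m}(R)$ actually sit inside $R$, via a local analysis at the boundary $\{a,a'\}$ of $R$. At the fixed cutpoint $a$, the map $f$ permutes the branches of $J$ at $a$; the $K$-branch (which coincides with the $R$-branch near $a$) is sent by $f$ to the branch at $a$ pointing toward $f(c)$, and inductively the $K$-branch returns to itself under $f^i$ exactly when $f^i(c)\in K$. Minimality of $m$ therefore makes $m$ the smallest positive integer with $f^m$ sending the $R$-branch at $a$ back to itself; since $f^m$ is a local homeomorphism at $a$ permuting branches, exactly one branch at $a$ pulls back into $R$, namely the $R$-branch itself. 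A parallel tracking of the $R$-branch at $a'$ (whose $f$-image is the branch at $a$ containing $f(c)$) gives the same conclusion at $a'$.

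Third, I would use the involution $\iota\colon J\to J$, $x\mapsto x'$ (with $c\mapsto c$), which satisfies $f\circ\iota=f$ and hence $\iota(f^{-j}(R))=f^{-j}(R)$ for every $j$; combined with $\iota(R)=R$ (since $\iota$ swaps $a\leftrightarrow a'$), this yields $\iota(R_1)=R_1$. In case (1), the component $V$ of $R_1$ containing $a$ is, by the local analysis, contained in $R$ and maps homeomorphically onto $R$; setting $V':=\iota(V)$ gives another component containing $a'$, and $V\cap V'=\emptyset$, for otherwise $V=V'$ would force $\iota|_V=\id$ by injectivity of $f^m|_V$, contradicting that the only fixed point of $\iota$ is $c\notin V$. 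Any further component of $R_1$ would have to map homeomorphically onto $R$ and hence contain preimages of both $a$ and $a'$, which the local analysis at $a,a'$ forbids; so $R_1=V\cup V'$. In case (2), the component $R_1^*$ of $R_1$ containing $c$ is $\iota$-invariant and, by the degree analysis, is a $2$-to-$1$ branched cover of $R$ with unique critical point $c$; the local analysis at $a,a'$ places all preimages of $\{a,a'\}$ lying in $R$ inside $R_1^*$, so $R_1=R_1^*$ is a single dendrite. The main obstacle is precisely the local branch analysis at $a$ and $a'$: verifying that exactly one branch at each boundary point pulls back into $R$ requires correlating the $f$-orbit of $c$ with the branch-permutation orbit at $a$, and it is the simultaneous control of both by the minimality of $m$ that prevents the pullback of $R$ from escaping through $\{a,a'\}$.
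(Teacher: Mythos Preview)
Your proposal is essentially correct and follows the same strategy as the paper: both arguments hinge on the minimality of $m$ (so that $f^i(c)\notin R$ for $0<i<m$), a local analysis at the boundary points $a,a'$ of $R$, and the symmetry coming from the involution $x\mapsto x'$. The paper packages the local analysis more concretely---it picks explicit points $u,u_1\in[a,c]$ with $f^m([a,u_1])=[a,u]\subset R$ and defines $V$ as the pullback of $R$ along the orbit of $u_1$, then argues $V\subset R$ from the fact that $f^m$ is a local homeomorphism at $a$ sending the $R$-germ to itself---whereas you phrase the same thing as germ-tracking of branches at $a$ under the permutation induced by $f$. Your correlation ``the $K$-branch returns under $f^i$ exactly when $f^i(c)\in K$'' is precisely what the paper's $u,u_1$ construction encodes.

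One step in your write-up is under-argued. When you say ``any further component of $R_1$ would \ldots\ contain preimages of both $a$ and $a'$, which the local analysis at $a,a'$ forbids,'' the germ analysis only tells you which branch \emph{at} $a$ (resp.\ $a'$) lies in $f^{-m}(R)$; it does not by itself locate \emph{all} $f^m$-preimages of $a$ inside $R$. To close this you should add: since $a$ is fixed and not a critical value, $f^{-1}(a)=\{a,a'\}$, so any $p\in R$ with $f^m(p)=a$ has $f^{j}(p)=a'$ for the least $j$ with $f^{j+1}(p)=a$; tracking the arc $[a,p]\subset R$ (on which $f^{j}$ is injective because the orbit of $c$ avoids $R$ for $0<i<m$) then forces $p$ into the component through $a$ or $a'$, i.e.\ into $V\cup V'$. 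The paper is equally informal here (it simply ``sets $R_1=V\cup V'$'' after showing $V,V'\subset R$), so this is a shared expository gap rather than a flaw in your approach.
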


\begin{proof} Clearly, $[a, c]\cap f^m[a, c]=[a, d]$ with some $d$,
and there is a point $u\in [a, d]$ with $f^m(u)=d$ and a point
$u_1\in [a, u]$ with $f^m(u_1)=u$. The arc $[a, u]$ ``rotates''
about $a$ and comes back onto $[a, d]$ after $m$ steps. In other
words, $[a, u]$ ``sweeps'' through the germs (at $a$) of all
components of $J\setminus \{a\}$. Observe also that $[a, d]\subset
[a, c]\subset [a, a']$.  Let $V$ be the pullback of $R$ along $u_1,
f(u_1), f^m(u_1)$. Then $f^m(V)=R$. Let us show that $V\subset R$.
Indeed, suppose that there is a point $y\in V\setminus K$. Take a
point $z\in [a, y]$ close to $a$. Then since $f$ is a local
homeomorphism at $a$ and $f^m([a, u_1])=[a, u]$ we see that
$f^m(z)\nin K$ and hence $f^m(z)\nin R$, a contradiction. For $y\in
V\setminus K'$ we get a similar conclusion. So, $ V\subset R$ is the
component of $f^{-m}(R)$ containing $u_1$, and $V'\subset R$ is the
component of $f^{-m}(R)$ containing $u'_1$.

Set $R_1=V\cup V'$. Since both $V$ and $V'$ are pullbacks of $R$
then either $V=V'$, or $V\cap V'=\0$. Suppose that $f^m(c)\nin R$.
Then $V\cap V'=\0$ since otherwise $c\in [a, a']\subset V=V'$ and
hence $f^m(c)\in R$, a contradiction. In this case $f^m$ maps $R_1$
onto $R$ as a 2-to-1 covering map. Now, suppose that $f^m(c)\in R$.
Then as above it follows that $c\in V\cap V'$. So in this case
$V=V'$ is the pullback of $R$ along $u_1, f(u_1), \dots,
f^m(u_1)=u$, and $f^m$ maps $R_1$ onto $R$ as a 2-to-1 branched
covering map with the critical point $c$.

\end{proof}

So, $R_1\subset R, f^m(R_1)=R$; iterating this, we consider the sets
$R_i=\{x:x\in R, f^m(x)\in R, \dots, f^{im}(x)\in R$, and the set
$R_\iy=\{x: f^{jm}(x)\in R, j\ge 0\}=\cap_i R_i$. Thus, $R_\iy$ is
the set of all points whose $f^m$-orbits are contained in $R$. In
Lemma~\ref{le-Rinf} we relate the local properties at $a$ and the
orbit of $c$. Observe, that by the above, locally at $a$ there are
always $m>1$ small semiopen arcs (for example, $(a,u_1],
(a,f(u_1)],\dots$) which exclude $a$ and are cyclically permuted by
$f$ so that the first arc maps over itself under $f^m$ in a
repelling fashion, and each component of $J\setminus \{a\}$ contains
exactly one of these arcs (thus, at $a$ the map is a \emph{local
rotation (of local period $m$)}).

\begin{lem}\label{le-Rinf} The set $R_\iy$ is a continuum if and only if
the $f^m$-orbit $\orb_{f^m}(c)$ of $c$ is contained in $R$; in this
case $F_1=f^m|_{R_\iy}\in \ty(R_\iy)$. In particular, the unique
critical point $c$ of $F_1$ is not fixed and hence if $c$ is
periodic then its period is not equal to the local period at $a$.
\end{lem}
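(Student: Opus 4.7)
The plan is to prove the biconditional first, and then the structural statement about $F_1$.  For the forward direction, I will observe that both $a$ and $a'$ lie in $R_\iy$: indeed $a$ is fixed, and $f(a')=f(a)=a$ by definition of the involution, so $f^{jm}(a')=a\in R$ for every $j\ge 1$.  If $R_\iy$ is a continuum, then being a subdendrite of $J$ containing $a$ and $a'$ it must contain the unique arc $[a,a']\subset J$, and hence the point $c\in(a,a')$.  The condition $c\in R_\iy$ is exactly the statement $\orb_{f^m}(c)\subset R$.

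For the converse, assume $\orb_{f^m}(c)\subset R$.  Since $f^m(c)\in R$, case~(2) of Lemma~\ref{le-R1} gives that $R_1$ is a dendrite and $g:=f^m|_{R_1}\colon R_1\to R$ is a $2$-to-$1$ branched covering with unique critical point $c$.  Being a branched covering of dendrites, $g$ is open and hence confluent.  I would then show by induction that each $R_i$ is a dendrite.  Observing that $R_{i+1}=g^{-1}(R_i)$, confluence forces every component of $R_{i+1}$ to map onto $R_i$, and our hypothesis ensures $c\in R_{i+1}$ for every $i$.  The component $V_{i+1}$ containing $c$ has degree $2$ over $R_i$ because the local degree at the branch point $c$ is $2$; since the global degree of $g$ is $2$, this single component accounts for both $g$-preimages of every point of $R_i$, leaving no room for other components, so $R_{i+1}=V_{i+1}$ is connected.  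Consequently $R_\iy=\bigcap_i R_i$ is a decreasing intersection of continua, hence a continuum.

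For the structural claim, $F_1=f^m|_{R_\iy}$ sends $R_\iy$ into itself because the orbit condition is inherited, and the same bookkeeping shows that both $g$-preimages of any $y\in R_\iy$ lie in $R_\iy$; thus $F_1$ is a $2$-to-$1$ branched covering with unique critical point $c$.  Absence of identity return and wandering continua for $F_1$ is inherited from $f$: an identity return of $F_1^n$ is one of $f^{nm}$, while any wandering continuum $K$ for $F_1$ is already wandering for $f$ (by a standard extraction of a wandering subcontinuum from the $f$-orbit).  Hence $F_1\in\ty(R_\iy)$.  Finally, applying Theorem~\ref{TH AlphaFixPt} to $F_1$ produces a non-fixed critical cutpoint; since $c$ is the only critical point of $F_1$ and is a cutpoint of $R_\iy$ (because $c\in(a,a')\subset R_\iy$ separates $a$ from $a'$), we conclude $f^m(c)\ne c$.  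In particular, if $c$ is $f$-periodic with period $p$, then $p=m$ would force $f^m(c)=c$, so $p\ne m$.

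I expect the main obstacle to be the inductive connectedness step: confluence alone only provides surjectivity of components onto $R_i$, so the argument must rely on the degree bookkeeping at the branch point to rule out spurious components, and this bookkeeping uses the hypothesis $\orb_{f^m}(c)\subset R$ in an essential way at every level.  A secondary technical nuisance is verifying that ``no wandering continua'' transfers from $f$ to $F_1=f^m$, since disjointness of $F_1$-iterates does not directly yield disjointness of all $f$-iterates.
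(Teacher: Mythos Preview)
Your proposal is correct and follows essentially the same approach as the paper's proof. Both argue the forward direction by noting that connectedness of $R_\iy$ forces $[a,a']\subset R_\iy$ and hence $c\in R_\iy$; both argue the converse by showing inductively that each $R_i$ is a single connected pullback of $R$ containing $c$, and both deduce the final clause from Theorem~\ref{TH AlphaFixPt}. The only differences are presentational: you spell out the confluence and degree-two bookkeeping explicitly where the paper simply invokes the pullback language, and you are more explicit about why $a,a'\in R_\iy$. Your honest flag about transferring ``no wandering continua'' from $f$ to $f^m$ is well placed---the paper asserts this without comment---but in the present setting it follows from the rotational structure at $a$: the images $R_\iy, f(R_\iy),\dots,f^{m-1}(R_\iy)$ lie in distinct components of $J\setminus\{a\}$ (meeting only at the fixed point $a$), so an $F_1$-wandering continuum not containing $a$ would already wander under $f$, while one containing $a$ cannot wander at all.
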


\begin{proof} Suppose that $\orb_{f^m}(c)\not \subset R$;
then $R_\iy$ is not connected since otherwise $[a, a']\subset
R_\iy$, and the orbit of $f^m(c)$ is contained in $R$. Hence, $c\in
[a, a']\subset R_\iy$, a contradiction. Suppose now that
$\orb_{f^m}(c)\subset R$ and show that then $R_\iy$ is connected.
Indeed, by induction it is easy to see that in this case the entire
arc $[a, a']$ is mapped into $R$ by all powers of $f^m$. By
Lemma~\ref{le-R1} $R_1$ is the $f^m$-pullback of $R$ along $c,
f^m(c)$. Since $f^{2m}(c)\in R_1$, $R_2$ is the $f^m$-pullback of
$R_1$ along $f^m(c), f^{2m}(c)$, i.e. $R_2$ is the $f^m$-pullback of
$R$ along $c, f^m(c), f^{2m}(c)$. Continuing by induction we see
that $R_i$ is the pullback of $R$ along $c, f^m(c), \dots,
f^{im}(c)$. All $R_i$'s are continua, and since $R_\iy=\cap_i R_i$
we see that $R_\iy$ is a continuum too.

Let us show that then $F_1=f^m|_{R_\iy}\in \ty(R_\iy)$. Indeed,
clearly $R_\iy$ is a dendrite and $F_1$ has no wandering continua or
identity return. Since $R_\iy\subset R_1$ is symmetric in the sense
that $R'_\iy=R_\iy$ then $F_1|_{R_\iy}$ is 2-to-1, and it follows
from the definition that $f^m(R_\iy)=R_\iy$. Thus, $F_1\in
\ty(R_\iy)$. By Theorem~\ref{TH AlphaFixPt} the critical point $c$
of $F_1$ \emph{cannot be fixed}, and  this implies that if $c$ is
periodic then its period is not equal to the local period $m$ at
$a$.
\end{proof}

If $R_\iy$ is connected, we call $F_1$ a \emph{rotational
renormalization (of generation $1$)} of $f$; the $F_1$-orbit of $c$
is not a fixed point. Apply to $F_1$ the same construction; then
either $F_1$ is rotationally renormalizable or not. If it is, we
denote its rotational renormalization $F_2$ and call $F_2$ the
\emph{rotational renormalization of $f$ of generation $2$}. As
above, by Theorem~\ref{TH AlphaFixPt} the $F_2$-orbit of $c$ is not
a fixed point. The process of constructing rotational
renormalizations $F_n$ of $f$ can continue as long as we get
rotationally renormalizable maps; by Theorem~\ref{TH AlphaFixPt} if
on the step $n$ we get a map $F_n$, the $F_n$-orbit of $c$ is not a
fixed point. The $F_n$-orbit of $c$ will be called the
\emph{rotational renormalization of the periodic orbit of $c$ (of
generation $n$)}. Observe that if the orbit of $c$ is infinite then
this process could be repeated infinitely many times. Otherwise it
can only continue finitely many times and in the end we will get the
rotational renormalization of $f$ of the greatest possible
generation which we will then call the \emph{final rotational
renormalization} of $f$. By Theorem~\ref{TH AlphaFixPt} for
rotational renormalizations of the periodic orbit of $c_f$ of any
generation, including the final renormalization of $f$, the critical
point $c$ is not a fixed point.

We now relate the above to  combinatorial one-dimensional dynamics.
Let $X$ be a dendrite and $P\subset X$ be finite. Suppose that $p\in
P$ and there is a map $f$ defined on $P$ (and maybe elsewhere too)
such that $P=\orb_f(p)$. Consider the continuum hull $T(P)=T$ of
$P$; clearly, $T$ is a tree. Consider two triples $(f, P,X)$ and
$(f', P',X')$ as described above with $f':P'\to P'$ a transitive map
and $P'$ contained in a dendrite $X'$. Suppose that there exists a
homeomorphism $h:T(P)\to T(P')$ which respects the dynamics of
$f|_P$ and $f'|_{P'}$. Then we declare $(f, P,X)$ and $(f', P',X')$
to be equivalent. The class of equivalence of $(f, P,X)$ is called a
\emph{pattern}. If a map $F:X\to X$ of a dendrite and an
$F$-periodic point $x$ are given then we call the pattern of $(F,
\orb_F(x),X)$ the \emph{pattern of $x$}, and we can also say that
$x$ \emph{exhibits} a certain pattern. Lemma~\ref{le-version}
excludes certain types of patterns from the list of possibilities
for periodic orbits of critical points of maps $f\in \tp$. To
describe them we need a few notions.

Suppose that for $(f, P, X)$ there is a partition of $P$ into
cyclically permuted (by $f$) non-degenerate subsets with pairwise
disjoint continuum hulls. Then we call the subsets \emph{blocks} and
say that the pattern has a \emph{block structure} (a block structure
is not unique). Suppose that all points of $P$ are endpoints of
$T(P)$ and there is a point $a\in  T(P)$ such that arcs from $P$ to
$a$ meet only at $a$. Then we can  visualize the action of $f$ on
$P$ as the ``rotation'' of $P$ about $a$. In this case we call the
pattern of $(f, P,X)$ \emph{basic rotational} or a \emph{snowflake
(of generation $1$)}. Similarly, suppose that a pattern of $(f,
P,X)$ has a block structure such that the set-theoretic difference
$B$ between $T(P)$ and the union of all the blocks is connected,
continuum hulls of different blocks are disjoint, and there is a
point  $a$ in $B$ such that all components of $T(P)\setminus \{a\}$
containing different blocks of $P$ are pairwise disjoint (this time
$f$ ``rotates'' the blocks about $a$). Then we say that $(f, P,X)$
exhibits a \emph{non-trivial rotational pattern (of generation
$1$)}. Recall that blocks are non-degenerate by definition. Also, it
is clear that there exist patterns which are neither snowflakes of
generation $1$ nor non-trivial rotational patterns of generation
$1$. However, we are not interested in such patterns and do not
consider them here.

Let $(f,P ,X)$ exhibit a non-trivial rotational pattern with $n_1=n$
blocks $P^1_0,$ $f(P^1_0),$ $\dots,$ $f^{n-1}(P^1_0)$. Consider a
few cases. First, it may happen that $(f^n, f^i(P^1_0),X)$ exhibits
a basic rotational pattern for all $0\le i\le n-1$. In this case we
say that the pattern of $(f, P,X)$ is a \emph{snowflake (of
generation $2$)}. Second, it may happen that for all $i, 0\le i\le
n-1$ the pattern of $(f^n, f^i(P^1_0),X)$ is a non-trivial
rotational pattern with the blocks of $(f^n, f^{i+1}(P^1_0),X)$
being $f$-images of blocks of $(f^n, f^i(P^1_0),X)$. Then say that
the pattern of $(f, P,X)$ is a \emph{non-trivial rotational pattern
of generation $2$}. There exist non-trivial rotational patterns of
generation $1$ which belong to neither of the above classes but we
do not consider them here.

This process can be continued. If a pattern of $(f, P,X)$ is
non-trivial rotational of generation $k$ then there is a block
$P^k_0$ containing $p$ and there are say $n_k$ blocks into which $P$
is partitioned. If now all patterns of $(f^{n_k},$ $f^i(P^k_0),$
$0\le i\le n_k-1$ are snowflakes then we say that the pattern of
$(f, P, X)$ is a \emph{snowflake (of generation $k+1$)}. On the
other hand, if $(f^{n_k}, P^k_0,X)$ exhibits a non-trivial
rotational pattern with the block $P^{k+1}_0$ containing $p$ so that
in fact for any $i, 0\le i\le n-1$, the pattern of $(f^{n_k},
f^i(P^k_0),X)$ is also a non-trivial rotational pattern whose blocks
are the appropriate images of the blocks of $(f^{n_k}, P^k_0)$, then
we say that the pattern of $(f, P)$ is a \emph{non-trivial
rotational pattern of generation $k+1$}. There exist non-trivial
rotational patterns of generation $k$ which belong to neither of the
above classes but we do not consider them. A pattern is called a
\emph{snowflake} if it is a snowflake of some generation.

\begin{lem}\label{le-version} Suppose that $f\in \tp$. Then the pattern of the
periodic orbit of the critical point $c_f$ cannot be a snowflake.
\end{lem}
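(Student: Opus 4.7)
The plan is to argue by induction on the generation $k$ of the snowflake, using the rotational renormalization $F_1$ from Lemma~\ref{le-Rinf} to peel off one level of rotation at each step. The base case $k=1$ will directly contradict Lemma~\ref{le-Rinf}; the inductive step will convert a snowflake of generation $k+1$ for $f$ into a snowflake of generation $k$ for $F_1\in \tp(R_\iy)$, to which the inductive hypothesis applies.

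For the base case, suppose the $f$-orbit of $c$ is basic rotational of period $n\ge 2$, and let $a$ be the rotation center from the definition. Being a fixed branchpoint of $T(P)$ of valence $n$, $a$ is a fixed cutpoint of $J$, so $a=a_f$ by Theorem~\ref{TH AlphaFixPt}. The $n$ arcs $[a,f^i(c)]$, $0\le i\le n-1$, are cyclically permuted by $f$, hence $f$ cyclically permutes the $n$ components of $J\setminus\{a\}$ containing them. Since at $a_f$ the map $f$ acts as a local rotation of period $m=m_f$ whose $m$ components of $J\setminus\{a\}$ form a single $m$-cycle, and a single cycle admits no proper invariant subcycles, $n=m$. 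Consequently $f^m(c)=c\in R$, so Lemma~\ref{le-Rinf} produces $F_1=f^m|_{R_\iy}\in \ty(R_\iy)$ with $F_1(c)=c$, contradicting the assertion in Lemma~\ref{le-Rinf} that the critical point of $F_1$ is never fixed.

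For the inductive step, assume the claim for all snowflakes of generation $\le k$ and suppose the $f$-orbit of $c$ has a snowflake pattern of generation $k+1$. At the outermost level this pattern is non-trivial rotational of generation $1$, with $n_1$ blocks $P^1_0,f(P^1_0),\ldots,f^{n_1-1}(P^1_0)$ rotated about a point $a$; exactly as in the base case, $a=a_f$ and $n_1=m_f$. The block $P^1_0$ containing $c$ lies in a single component of $T(P)\setminus\{a\}$, and its continuum hull is disjoint from the continuum hulls of the other blocks; from this one checks $T(P^1_0)\subset R$, so $\orb_{f^{m_f}}(c)\subset R$ and Lemma~\ref{le-Rinf} yields $F_1=f^{n_1}|_{R_\iy}\in \tp(R_\iy)$. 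Under $F_1$ the orbit of $c$ is $P^1_0$, and by the recursive shape of the snowflake definition, stripping the outermost rotation turns the snowflake of generation $k+1$ for $f$ into a snowflake of generation $k$ for $(F_1,P^1_0,R_\iy)$; the inductive hypothesis applied to $F_1$ then supplies the contradiction. The main obstacle is the inclusion $T(P^1_0)\subset R$, i.e., checking that the involution partner $a'$ does not separate $c$ from the remaining points of $P^1_0$ inside the component of $J\setminus\{a\}$ containing $c$: one uses that $c\in[a,a']$ forces $a'$ onto the arc leaving $c$ opposite to $a$, while the disjointness of block hulls in the non-trivial rotational structure keeps $T(P^1_0)$ on the $c$-side of $a'$.
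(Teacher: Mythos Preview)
Your overall strategy coincides with the paper's: induct on the snowflake generation, use the rotational renormalization $F_1$ of Lemma~\ref{le-Rinf} to strip off the outermost rotational layer, and end at the base case where $F_1(c)=c$ is forbidden.

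The gap is in how you obtain the geometric prerequisites for the renormalization. In both the base case and the inductive step you assert that the rotation center from the pattern definition is a \emph{fixed} point, hence equals $a_f$. But the definitions of basic and non-trivial rotational patterns only say that the points (respectively block hulls) lie in distinct components of $T(P)\setminus\{a\}$; nothing guarantees $f(a)=a$, and ``being a fixed branchpoint'' assumes what must be shown. The paper avoids this identification altogether. In the non-trivial rotational case it works directly with $a_f$ and proves, by the case split $c\in P_1$ versus $c\notin P_1$ (using that $f|_{T(P_1)}$ is a homeomorphism when $c\notin P_1$, and that $f|_{[c,y]}$ is one-to-one when $c\in P_1$), that $a_f\notin T(P_i)$ for every block $P_i$; a parallel argument gives $a'\notin T(P_i)$. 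From this and the observation that only the block $Q\ni c$ lies in $K$, one obtains $T(Q)\subset R$, hence rotational renormalizability with $F_1$-orbit of $c$ equal to $Q$. Your one-line sketch (``$c\in[a,a']$ forces $a'$ onto the arc leaving $c$ opposite to $a$, while disjointness of block hulls keeps $T(P^1_0)$ on the $c$-side'') does not yield $a'\notin T(P^1_0)$: there could well be a point $y\in P^1_0$ with $a'\in(c,y)$, and it is precisely this that the paper's case analysis rules out. The equality $n_1=m_f$ you invoke is likewise a consequence of the one-block-per-component fact, not something obtained ``exactly as in the base case.''
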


\begin{proof} Let $f:X\to X$. Suppose first that the pattern of $c_f=c$
is a snowflake of generation 1. Denote the $f$-orbit of $c$ by $P$.
Then there is a point $v\in T(P)$ such that $v=a$, the fixed
cutpoint which belongs to $(c, f(c))$. It follows that  the period
of $c_f$ equals the local period of $f$ at $a$, $f$ is rotationally
renormalizable, and for the first rotational renormalization $F_1$
of $f$ we have that $c_f$ is fixed. However this is impossible by
Theorem~\ref{TH AlphaFixPt}, hence $(f, P,X)$ cannot be a snowflake
of generation 1.

Suppose that $P$ is a non-trivial rotational pattern and show that
then the fixed cutpoint $a$ in $(c,f(c))$ does not belong to the
continuum hull of any block of this pattern. Indeed, suppose
otherwise. Then $a\in T(P_1)$ where $P_1$ is one of the blocks. If
$c\nin P_1$ then by the definition of blocks $c\nin T(P_1)$. If
$c\nin T(P_1)$, then $T(f(P_1))=f(T(P_1))$ since $F|_{T(P_1)}$ is a
homeomorphism. Hence, the fact that $a\in T(P_1)$ implies that
$f(a)=a\in T(f(P_1))$, a contradiction, since by definition
continuum hulls of blocks must be disjoint. Suppose now that $c\in
P_1$. If $a\in T(P_1)$ then there is another point $y\in P_1$ such
that $a\in (c, y)$. Since $f|_{[c, y]}$ is one-to-one, then $a\in
f([c, y])=[f(c), f(y)]\subset T(f(P_1))$, and so again we have
contradiction with the property that continuum hulls of blocks must
be disjoint. Hence $a$ does not belong to the continuum hull of any
block of $(f, P,X)$ and the action of $f$ on $P$ can be viewed as
the ``rotation'' of blocks of $P$ about $a$.

Suppose there are $m$ such blocks. Let us show that then $f$ is
rotationally renormalizable. Indeed, we need to show that the
$f^m$-orbit of $c$ is contained in $R$, the component of $X\setminus
[a,a']$ containing $c$. Observe that similarly to the previous
paragraph we can show that $a'$ does not belong to the continuum
hull of any block of $P$. On the other hand, there is only one block
of $P$ contained in $K$ (recall that $K$ is the component of
$J\setminus \{a\}$), namely the block $Q$ to which $c$ belongs.
Indeed, otherwise $P$ would not be a non-trivial rotational pattern
of generation 1, a contradiction with the assumption. Thus, the
entire $f^m$-orbit of $c$ coincides with $Q$, and since $a'\nin
T(Q)$ then $Q\subset R$ as desired. Thus, $f$ is rotationally
renormalizable and we can continue the same arguments now applying
them to $f^m|_{R_\iy}$. Repeating the construction, we see that if
the orbit of $c$ is a snowflake then eventually we will get a
renormalization of $f$ for which the critical point will be fixed, a
contradiction with Theorem~\ref{TH AlphaFixPt}. This completes the
proof of the lemma.
\end{proof}

Snowflakes have already been studied in a different context. Namely, in
\cite{blo91} continuous tree maps were considered and patterns of zero entropy
tree maps fully described. It turns out that a continuous zero entropy tree map
can only have periodic points whose patterns are snowflakes (which explains the
title of the paper \cite{blo91}). The reason they appear here as well is that
for the tree dynamics the patterns of periodic orbits not forcing positive
entropy and the patterns forcing (in the absence of critical points outside the
periodic orbit) the existence of either identity return or attracting periodic
point are the same.


\section{Renormalization of Laminations}\label{relam}

In Section~\ref{relam} we define {\em renormalizations of laminations}
in parallel to the renormalization on the dendrites (Section~\ref{per})
and solve the main problem.
Throughout the section we assume that a lamination
with periodic critical leaf $\ell_\ta=\ol{\ta\ta'}$ is given (i.e., $\si(\ta)$ is periodic).

We need a few notions. The family of orientation preserving
\emph{homeomorphisms} $h:\ucirc \to \ucirc$ is denoted by $\Ho$; the family of
orientation preserving monotone \emph{maps} $\ucirc\to \ucirc$ is denoted by
$\M$. Suppose $A, B\subset \ucirc$, and $f:A\to A, g:B\to B$ are two maps.  We
call $f$ and $g$ \emph{conjugate (monotonically semiconjugate)} if there is a
map $h\in \Ho$ ($m\in \M$) which conjugates (semiconjugates) $f|_A$ to $g|_B$.
A closed $\si$-invariant set $D\subset \ucirc$
is said to be \emph{rotational (of rotation number $0\le \rho<1$)} if:

\begin{enumerate}

\item $D$ is a periodic orbit on which $\si|_D$ is  conjugate to the restriction
of the rigid rotation by the rotation angle $\rho$ on the orbit of $0$, \,
\textbf{or}

\item $\si|_D$ is monotonically semiconjugate to the irrational rigid rotation
by the angle $\rho$.

\end{enumerate}

If $A, B$ are finite and $f|_A$ and $g|_B$ are conjugate, say that
$A$ and $B$ \emph{exhibit the same pattern} (then the combinatorics
of $g|_P$ and $h|_Q$ are the same). In particular, if $g$ is a rational rotation then
$A$ is said to be a \emph{rotational periodic orbit}.
If $f|_A$ is monotonically
semiconjugate to $g|_B$, say that $f|_A$ (or just $A$) has a
\emph{block structure} over $g|_B$ (or just $B$). Then there are several pairwise
disjoint arcs in $\ucirc$ containing \emph{blocks} of $A$, and in addition
$f|_A$ and $g|_B$ are 1-to-1 (e.g., if both $A, B$ are periodic orbits) then
blocks of $A$ are mapped onto blocks of $A$ in the same order as points of $B$ are mapped.

Let us discuss the results of Section~\ref{relam}.
In Subsection~\ref{sec-bascas} we consider two basic cases.
If an endpoint of $\ell_\ta$ has an appropriate rotational orbit, we prove
in Lemma~\ref{TH RotnlCase} that non-degenerate laminations $\sim$
compatible with $\ell_\ta$ \emph{do not exist}. However, if the opposite extreme takes
place and the periodic orbit in question does not even have a block structure over
the appropriate rational rotation then (Theorem~\ref{th-basnonrot})
non-degenerate laminations $\sim$ compatible with $\ell_\ta$ \emph{do exist}.

These two extreme cases are like two outcomes of a verification test
of whether non-degenerate laminations $\sim$ compatible with $\ell_\ta$ exist.
There is however a third possible outcome: the test is inconclusive, the
periodic orbit in question has a non-trivial block structure over the appropriate rational
rotation. This case is considered in
Subsection~\ref{renorm}.  There we introduce a version of renormalization for
invariant laminations and apply the obtained results to solve the Main Problem
and give a combinatorial criterion for the existence of a non-degenerate
lamination $\sim$ compatible with $\ell_\ta$.
Given $\ga$, set $\ga'=\ga+\frac 12$.

\begin{thm}[\cite{BS}]\label{lm-bs} Let $\ta\in [0, \frac 12)$. The semi-circle
$[\ta, \ta']$ contains a unique minimal rotational set
$A_{\ta}$ of {\em rotation number $\rho_\ta=\rho\in [0, 1)$}.
If $\rho$  is irrational, then $A_\ta$ is
a Cantor set on which $\si$ is semiconjugate to the irrational rotation by
$\rho$ and $\ta, \ta'$ belong to $A_{\ta}$ (so, if $\ta$ is preperiodic, $A_{\ta}$ is a periodic
orbit). If $\rho$ is rational, $A$ is a unique rotational periodic orbit of rotation number $\rho$.
The unique invariant set in $[\ta', \ta]$ is $\{0\}$.
\end{thm}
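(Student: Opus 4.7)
The plan is to construct, for each $\rho \in [0,1)$, a canonical minimal $\sigma$-invariant rotational set of rotation number $\rho$ sitting in a specific closed semicircle, and then show that this semicircle is exactly $[\theta,\theta']$ for a specific $\theta$, producing a monotone devil's staircase $\theta \mapsto \rho_\theta$. First I would handle the rational case $\rho = p/q$ in lowest terms: solving $\sigma^q(x)=x$ together with the constraint that $\{x,\sigma(x),\dots,\sigma^{q-1}(x)\}$ cycle like rotation by $p/q$ produces a unique orbit $O_{p/q}$ with coordinates in $\{n/(2^q-1)\}$. A direct length estimate shows that $\T\setminus O_{p/q}$ has a unique complementary gap of length greater than $1/2$, whose endpoints determine a canonical base $\theta$. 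The full set of $\theta\in[0,1/2)$ for which $O_{p/q}\subset[\theta,\theta']$ forms a half-open Farey-type interval $I_{p/q}$; on it I set $A_\theta = O_{p/q}$.

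For irrational $\rho$, I would take Farey-neighboring approximations $p_n/q_n \to \rho$ and pass to a Hausdorff limit $A$ of the orbits $O_{p_n/q_n}$. The resulting $A$ is nonempty, closed, and $\sigma$-invariant, and by construction carries a monotone circle-surjection onto $\T$ under which $\sigma|_A$ becomes rotation by $\rho$. Since irrational rotation is minimal and $\sigma$ is expanding (so $A$ has empty interior and no isolated points), $A$ is a minimal Cantor set. Its unique largest complementary gap has endpoints $\theta,\theta'\in A$, and $A = A_\theta$. The monotonicity of $\theta\mapsto\rho_\theta$ (with each $I_{p/q}$ as a plateau and each irrational $\rho$ attained at a single $\theta$) follows from the nested structure of the approximations. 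If $\theta$ is preperiodic, then since the orbit of $\theta$ inside $A_\theta$ is eventually periodic and the semiconjugacy collapses at most pairs of points, the induced rotation must be rational and $A_\theta$ a periodic orbit.

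Uniqueness of $A_\theta$ as the minimal rotational set of rotation number $\rho_\theta$ in $[\theta,\theta']$ follows from the combinatorial rigidity of itineraries under the partition $\{[\theta,\theta'],[\theta',\theta]\}$: any two minimal $\sigma$-invariant sets with the same rotation number assign corresponding points identical itineraries, and expansivity of $\sigma$ then forces pointwise equality. For the final claim about $[\theta',\theta]$, clearly $\{0\}\subset[\theta',\theta]$ is invariant. Conversely, suppose $X\subset[\theta',\theta]$ is closed with $\sigma(X)=X$ and $x\in X\cap(0,\theta]$. Then $\sigma(x)=2x$ lies in $[\theta',\theta]$ only if $x\leq\theta/2$ (for $\theta<1/2$, a point $2x\in(\theta,\theta')$ would leave the arc), and iterating yields $x\leq\theta/2^n$ for all $n$, forcing $x=0$; a symmetric estimate, writing points of $X\cap[\theta',1)$ as $1-y$, shrinks $y$ geometrically to $0$. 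Hence $X=\{0\}$. The main obstacle is the Farey-combinatorial bookkeeping needed to verify that the intervals $I_{p/q}$ tile $[0,1/2)$ modulo a set of irrational $\theta$'s, and the verification that the Hausdorff limits in the irrational case are genuinely Cantor sets rather than collapsing to finite orbits --- this is the technical heart of the Bullett--Sentenac analysis.
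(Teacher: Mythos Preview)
The paper does not prove this statement at all: Theorem~\ref{lm-bs} is stated with the citation \cite{BS} (Bullett--Sentenac) and is imported as a known result, with no proof given in the text. So there is no ``paper's own proof'' to compare against.

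Your sketch is essentially an outline of the Bullett--Sentenac argument itself, as you acknowledge in your final sentence. The overall architecture --- building the rational rotational orbits $O_{p/q}$ explicitly, showing they sit in a unique semicircle, passing to Hausdorff limits for irrational $\rho$, and assembling the devil's staircase $\theta\mapsto\rho_\theta$ --- is exactly the route taken in \cite{BS}. The uniqueness argument via itinerary rigidity under the partition by $\ell_\theta$ is also standard. One small remark on your last paragraph: the argument that the two subarcs $(0,\theta]$ and $[\theta',1)$ of $[\theta',\theta]$ are each separately forward-invariant under $\sigma$ (so points cannot bounce between them) is correct for $\theta>0$, and the geometric shrinking then works cleanly; the boundary case $\theta=0$ needs a word, since there the ``symmetric estimate'' does not immediately give a contraction, but the conclusion is still easy since $\sigma$ restricted to $[1/2,1)$ has no invariant set other than the orbit landing at $0$.
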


\subsection{Basic rotational and non-rotational cases}\label{sec-bascas}

\subsubsection{Basic Rotational Case}
Assume that $\ta\in [0, \frac 12)$. A critical leaf $\ell_\ta$ and the angles $\ta, \ta'$ are said to be
\emph{basic rotational} if $\{\ta, \ta'\}\cap A_\ta\not=\0$ and $\si(\ta)$ is
periodic.  Then $A_\theta=\orb(\theta)$ or $A_\theta=\orb(\theta')$.
In Theorem~\ref{TH RotnlCase} we solve the main problem for basic rotational
critical leaves.

\begin{thm}\label{TH RotnlCase}
Let $\ta \in [0, \frac{1}{2})$ and $\si(\ta)$ be periodic. Let $\sim$ be a
non-degenerate
invariant lamination and suppose that $\ta\sim \ta'$. Then:

\begin{enumerate}

\item if $\al, \be$ are such that for any $k$ the
angles $\si^k(\al), \si^k(\be)$ belong either to $[\ta, \ta']$
or to $[\ta', \ta]$ then $\al\sim \be$;

\item the geometric lamination $\lam^\ta_\iy$ is compatible with $\sim$;

\item the periodic orbit $A_\ta$ and $\{0\}$ are two invariant $\sim$-classes.

\end{enumerate}

In particular, if $\ta$ is basic rotational then a non-degenerate lamination $\sim$
with $\ta\sim \ta'$ does not exist.

\end{thm}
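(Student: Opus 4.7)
The plan is to prove (2), (1), and (3) in that order, then deduce the ``in particular'' using Theorem~\ref{TH AlphaFixPt}.

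For (2), I argue by induction on the level of a precritical leaf, handling limit leaves by closedness of $\sim$. The base case is the critical leaf $\ell_\ta$, whose endpoints are $\sim$-equivalent by hypothesis. For the inductive step, a precritical leaf $\ell$ of level $n+1$ with endpoints $a,b$ has $\si(\ell)$ of level $n$, whose endpoints $\si(a),\si(b)$ lie in a common class $E$ by induction. By Lemma~\ref{REM *}, $\si^{-1}(E)$ equals $E$ itself if $E$ is the critical class $C$, or $F\cup F'$ with $F,F'=F+\frac12$ two disjoint classes otherwise. In the non-critical case the two classes $F,F'$ are unlinked with $C$, so they sit in opposite open semicircles bounded by $\{\ta,\ta'\}$; since $\ell\in \lam^\ta_\iy$ is itself unlinked with $\ell_\ta$, its endpoints $a,b$ lie in the same closed semicircle and therefore in a common class. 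A limit leaf is a limit of precritical leaves, and closedness of $\sim$ finishes (2).

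For (1), I project to the quotient dendrite $J_\sim$ (a dendrite by Lemma~\ref{TH PerPropts}, since $C$ contains the preperiodic point $\ta$), and denote by $c=p(C)$ the unique critical point of $f_\sim$. Assume for contradiction that $\al\not\sim\be$ and set $x=p(\al)\ne p(\be)=y$, $K=[x,y]\subset J_\sim$. The itinerary hypothesis forces $c\notin f_\sim^k(K)$ for every $k\ge 0$: otherwise $f_\sim^k(K)$, being connected and containing $c$ together with $f^k(x),f^k(y)$, would have $c$ separating $f^k(x)$ from $f^k(y)$ in $J_\sim$, which pulls back to say $\{\ta,\ta'\}$ separates $\si^k(\al)$ and $\si^k(\be)$ in $\ucirc$, a contradiction. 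Hence each $f_\sim^k|_K$ is a homeomorphism. Proposition~\ref{TH NoWandDom or IdRet}(1) then prohibits $K$ from being wandering, so there exist $0\le i<j$ with $f_\sim^i(K)\cap f_\sim^j(K)\ne \0$. Setting $m=j-i$ and $T=\ol{\bigcup_{n\ge 0}f_\sim^{i+nm}(K)}$, consecutive terms overlap, so $T$ is a connected subdendrite of $J_\sim$ with $f_\sim^m(T)\subset T$. Since every $f_\sim^k(K)$ avoids $c$, the map $f_\sim^m|_T$ has no critical cutpoints; inheriting no wandering continua and no identity return from $f_\sim$, it satisfies the hypotheses of Theorem~\ref{TH AlphaFixPt}, which then demands the existence of a non-fixed critical cutpoint---a contradiction.

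For (3), part (1) implies that the $\sim$-class of any $a\in A_\ta$ contains $A_\ta$ (since all such orbits lie in $[\ta,\ta']$), and similarly that the class of $0$ contains every point whose orbit remains forever in $[\ta',\ta]$. Combining with Theorem~\ref{lm-bs} (the only $\si$-invariant set in $[\ta',\ta]$ is $\{0\}$) and $\si$-expansion near the extremes, these classes reduce to $A_\ta$ and $\{0\}$, both $\si$-invariant. For the ``in particular'', suppose $\ta$ is basic rotational; relabeling if necessary, $\ta\in A_\ta$. By (1), $A_\ta\subset [\ta]_\sim=C$, so $\si(\ta)\in C$. Since also $\si(\ta)\in \si(C)$ and $\sim$-classes are disjoint or equal, $\si(C)=C$ and therefore $f_\sim(c)=c$. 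But by Lemma~\ref{TH PerPropts} and Proposition~\ref{TH NoWandDom or IdRet}, $f_\sim\in \ty(J_\sim)$ with unique critical point $c$, and the last clause of Theorem~\ref{TH AlphaFixPt} produces a fixed cutpoint in the open arc $(c,f_\sim(c))$, forcing $c\ne f_\sim(c)$. This contradiction yields the result.

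The main obstacle is part~(1): the delicate steps are verifying that $f_\sim^m|_T$ genuinely inherits no wandering continua and no identity return from $f_\sim$ so that Theorem~\ref{TH AlphaFixPt} is applicable, and confirming that the subdendrite $T$ does not accumulate on $c$ in a way that would introduce a critical cutpoint of $f_\sim^m|_T$ at $c$; both points require a careful local inspection of the dendrite structure along the iterated arcs.
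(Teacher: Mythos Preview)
Your approach to (2) by direct induction on pullback level is a legitimate alternative to the paper's route (which derives (2) from (1)); one small inaccuracy: the case $\si^{-1}(E)=C$ occurs when $E=\si(C)$, not when $E=C$, but this does not affect the dichotomy ``one class or a pair $F,F'$'' that you actually use.

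The genuine gap is in (1). Your claim that $c\notin f_\sim^k(K)$ is not established, and in fact can fail. If the critical class $C$ has more than two points, say $\gamma\in C\cap(\ta,\ta')$, then $p([\ta,\ta'])\setminus\{c\}$ is disconnected (any class meeting both $(\ta,\gamma)$ and $(\gamma,\ta')$ would be linked with $C$), so $c$ is a cutpoint of the ``half'' $p([\ta,\ta'])$. Choosing $\al\in(\ta,\gamma)$ and $\be\in(\gamma,\ta')$ with matching itineraries then gives $c\in[x,y]=K$. Your ``pulls back'' step does not rescue this: $c$ separating $f^k(x)$ from $f^k(y)$ in $J_\sim$ does not force $\si^k(\al),\si^k(\be)$ into opposite semicircles, because several components of $J_\sim\setminus\{c\}$ can lie in the same half.

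What actually makes (1) work is that $f_\sim$ is \emph{injective on each half} $p([\ta,\ta'])$ and $p([\ta',\ta])$, independently of whether $c$ lies in the arc: if $f_\sim(u)=f_\sim(v)$ with $u\ne v$ then $v=u'$, and the involution exchanges the two halves. With this, a one-line induction gives $f_\sim^k(K)=[f^k(x),f^k(y)]\subset$ one half for every $k$, so $f_\sim^n|_K$ is one-to-one for every $n$. Now invoke directly the clause of Theorem~\ref{TH AlphaFixPt} asserting that no interval can be mapped one-to-one by every iterate; this is exactly what the paper does (phrased via the external rays $R_\ta,R_{\ta'}$ cutting the sphere into $A$ and $B$). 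Your detour through the subdendrite $T$ is then unnecessary, and with it the worries you flag about $T$ accumulating on $c$ disappear.

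For (3) you are close to the paper, but ``$\si$-expansion near the extremes'' is too vague. The paper's clean step: if some $x$ in the class of $A_\ta$ lies outside $A_\ta$, Theorem~\ref{lm-bs} gives an $m$ with $\si^m(x)\in[\ta',\ta]$, so the class $\si^m(g)$ straddles $\ell_\ta$ and must equal $C$; since $A_\ta\subset g$ is invariant, $C$ is invariant, giving a fixed critical point and contradicting Theorem~\ref{TH AlphaFixPt}. Your ``in particular'' matches the paper's.
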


\begin{proof} (1) Let $J_\sim=\ucirc/\sim$, by Lemma~\ref{TH PerPropts} the
topological Julia set $J_\sim$ is a dendrite. Under the conditions from the theorem consider the branched covering
map $\hf_\sim:\sphere \to \sphere$
defined on the \emph{entire} sphere (see Section~\ref{invlam} for the
description of $\hf_\sim$). The topological external rays $R_\ta$ and
$R_{\ta'}$ corresponding to the angles $\ta$ and $\ta'$ land on the same point in $J_\sim$ and  divide $\sphere$ into
two halves whose closures will be denoted $A$ and $B$. Denote the landing
points of the topological rays $R_\al, R_\be$ (corresponding to angles $\al,
\be$) by $z_\al, z_\be$ respectively. It follows that the two topological
external rays $R_\al$ and $R_\be$ corresponding to $\al$ and $\be$ are such
that for any $n$ we have both $\hf^n_\sim(R_\al)\cup \hf^n_\sim(R_\be)$ contained either
in $A$ or in $B$. Assume that $z_\al\ne z_\be$, then $f_\sim^n$ maps $[z_\al, z_\be]$
homeomorphically onto its image for any $n$ which is impossible by
Theorem~\ref{TH AlphaFixPt}, a contradiction.

(2) By the construction, if $\ol{ab}$ is a leaf of $\lam^\ta_\iy$ then for any $n$
$\ol{\si^n(a)\si^n(b)}$ does not cross $\ol{\ta\ta'}$. Hence by (1) $a\sim b$ as desired.

(3) Since all angles from $A_\ta$ have orbits
contained in $[\ta, \ta']\subset \ucirc$, they all are $\sim$-equivalent
and $A_\ta$ is contained in a $\sim$-class $g$. If there exists $x\in
g\setminus A_\ta$, then by Theorem~\ref{lm-bs} there exists $m$ with $\si^m(x)\in
[\ta', \ta]$. Hence a chord connecting $\si^m(x)$ and any point of $A$ intersects
$\ell_\ta$ implying that in fact $g$ is a critical class. However $g$ contains
an invariant set $A$, hence $g$ is an invariant class too. If $p:\ucirc\to
J_\sim$ is the quotient map then it follows from Lemma~\ref{TH PerPropts} that
$f_\sim:J_\sim\to J_\sim$ has a fixed critical point, a contradiction to
Theorem~\ref{TH AlphaFixPt}. Hence $A=g$ is an invariant $\sim$-class.
Similarly one shows that $\{0\}$ is an invariant $\sim$-class. Finally, suppose
that $\ta$ is basic rotational. Then $A_\ta$ is non-disjoint from $\{\ta,
\ta'\}$, hence if $\sim$ exists, it has to contain both points $\ta, \ta'$ and
cannot coincide with $A_\ta$, a contradiction. Hence, a non-degenerate lamination $\sim$ does not exist in
this case.

\end{proof}

\subsubsection{Basic Non-Rotational Case} Assume that $\ta\in[0,1/2)$ and let
$\ell_\ta$ be a critical leaf joining the points
 $\ta$ and $\ta'$. We now consider the case
that  $\si(\ta)$ is periodic and $A_\ta\cap\{\ta,\ta'\}=\0$. In this two possibilities are possible.
If $\ell_\ta$ is basic non-rotational (defined below), then
 we prove in Theorem~\ref{th-basnonrot} that in this
case a non-degenerate lamination $\sim$ with
$\ta\sim \ta'$ exists. As a tool we develop the idea of a {\em traveling horseshoe}
$D_\iy(A_\ta)$.

For a periodic orbit $A$, to define $D_\iy(A)$ we need a few notions which can
be developed more generally without reference to any critical leaf. Let
$I=[\al, \be], I'=[\al', \be']\subset \ucirc$ be two disjoint closed arcs not
containing $0$ or $\frac 12$, with either $\be, \al'$ being $k$-periodic, or
$\be', \al$ being $k$-periodic. Suppose for the sake of definiteness that $\be,
\al'$ are $k$-periodic and that $\al<\be<\al'<\be'$. If we move along the
circle from $\al'$ to $\be'$ then the $\si^k$-image of our point moves from
$\al'$ to $\be$. In the simplest case $\si^k$ maps $I$ onto $[\al', \be]$
homeomorphically, but it may happen that the $\si^k$-image wraps around the
circle a few times. In any case eventually it comes to $\si^k(\be')=\be$.
Similarly the $\si^k$-image of $I$ can be described.

Choose four intervals inside $I\cup I'=D$ as follows: (1) choose the interval
from $\be$ to the $\si^k$-preimage of $\al$ closest to $\be$ inside $I$ and
denote this interval $I_{00}$; (2) choose the interval from $\al$ to the
$\si^k$-preimage of $\be'$ closest to $\al$ and denote this interval $I_{01}$;
(3) similarly choose intervals $I'_{11}, I'_{10}\subset [\be', \al']$; (3) set
$H(I, I')=H=I_{00}\cup I_{01}\cup I'_{10}\cup I'_{11}$. Clearly, $0,
\frac12\nin \bigcup^k_{i=0} \si^i(H)$. Define the set $D_\iy(I, I')=D_\iy$ as
the set of all points which stay inside $H$ under $\si^k$. It follows that
$D_\iy$ is a Cantor set on which $\si^k$ is conjugate to the one-sided 2-shift;
$D_\iy$ is called a \emph{general horseshoe (of period $k$)}. The open arcs in
$\ucirc$ complementary to $D_\iy$ are said to be \emph{holes (in $D_\iy$)}.

We define a map $\ph$ which collapses all closed holes to points. It follows
that $\ph(\ucirc)=\ucirc$ and $\si^k|_{D_\iy}$ is semi-conjugate by $\ph$ to
$\si|_\ucirc$. Call the arcs $(\be', \al)$ and $(\be, \al')$ (i.e., the arcs
complementary to $I\cup I'$) the \emph{main holes (in $D_\iy$)} with their
union denoted by $M(D)$ (we will use this notation later). Also,
there are two holes whose endpoints map onto the non-periodic
endpoints of one of the main holes (the arcs themselves wrap around the circle
one or more times). These two holes are said to be \emph{premain}. All
other holes are said to be \emph{secondary}. Set the $\ph$-images of main
closed holes $[\be', \al]$ and $[\be, \al']$ to be points $\frac 12$ and $0$,
respectively.  Set the $\ph$-images of premain holes to be points $\frac 14$
(for the hole contained in $[\al, \be']$) and $\frac 34$ (for the hole
contained in $[\al, \be']$. Inductively, map secondary holes to
appropriate diadic rational angles. Thus, $\ph(\be)=\ph(\al')=0,
\ph(\al)=\ph(\be')=\frac 12$ etc. The map can then be extended onto the entire
circle; it collapses all holes in $D_\iy$ and maps $D_\iy$ onto $\ucirc$. We
call $\ph$ the \emph{pruning of $\ucirc$ by $D=I\cup I'$}; also, in this
setting we call $\ucirc$, understood as the $\ph$-image of $D_\iy$, the
\emph{$\ph$-circle}. The pruning $\ph$ monotonically semiconjugates
$\si^k|_{D_\iy}$ and $\si|_{\ucirc}$.

We now need to outline our plan of solving the main problem. First let us become
more specific with respect to the behavior of $D_\iy$ under other powers of
$\si$. Suppose that in the situation above the convex hulls of sets
$\si(H),\dots, \si^{k-1}(H)$ are disjoint from the convex hull of $H$. Then we
say that $D_\iy$ is a \emph{traveling horseshoe (of period $k$)}. E.g., a
traveling horseshoe can be generated by two intervals $I$ and $I'$ as above if
$\si(I)=\si(I'), \dots, \si^{k-1}(I)$ are disjoint from $I\cup I'=D$.
Now, given a periodic critical leaf $\ell_\ta$, by Theorem~\ref{lm-bs}
the semicircle $[\ta,\ta']$ contains a minimal rotational periodic orbit $A_\ta=A$
of period, say, $k$. Then we construct below a \emph{canonic traveling horseshoe
(associated to $A$)} (and hence to $\ell_\ta$). This horseshoe
$D_\iy(\ta)=D_\iy$ travels so that the orbit $Z(\ta)=Z$ of $D_\iy$ has block
structure over $A$ (hence all invariant subsets
of $Z$ have block structure over $A$). We show that the opposite is
also true (for brevity we do this only for periodic orbits but  the claim holds for any set):
if a periodic orbit $P$ has a block structure over $A$ then it is contained in the orbit of $D_\iy$.
Hence the pattern of a periodic orbit already shows if the orbit is contained in $Z$ or not.

Let us explain how we use these tools to solve the main problem; denote the orbit of $\si(\ta)$ by $P$.
We show that the \emph{canonic pruning $\ph$}
at most 2-to-1 semiconjugates $\si^k|_{D_\iy}$ to $\si$. We use this to show that if $\ta\nin D_\iy$
(equivalently, the orbit of $\si(\ta)$ does not
have a block structure over $A$) then the lamination $\sim_{\lam^{\ta}_{\iy}}$ constructed
in Section~\ref{geola} is non-degenerate. Now, suppose that $\ta\in D_\iy$ (and hence $P\subset Z$).
Then we transport the block of
$P$ contained in $D_\iy$ to a periodic orbit $Q$ of $\si|_{\ph(D_\iy}$. Then $Q$ is called
a \emph{rotational renormalization} of $P$. The construction is correct because
for any critical diameter $\ell_\ga$ such that $\ga$ comes from the pair of intervals generating
$D_\iy$ we have $A(\ga)=A(\ta)$. Now we apply the same arguments to $Q$
and proceed similarly which in the end leads to the main result of the paper.

Let us now make a few remarks.
In the case when $\si(\ta)$ is periodic, there is a possible source of confusion: the periodic
orbit of $\si(\ta)$ can be a rotational orbit, but not be $A_\ta$. This is
because $A_\ta$ and the orbit of $\si(\ta)$ are the same periodic orbit iff
$A_\ta$ is entirely on one side of the diameter $\ol{\ta\ta'}$.  For example,
if $\ta$ (or $\ta'$) happens to be in a rotational orbit $B$, but points of $B$
are on both sides of $\ol{\ta\ta'}$ (i.e., if $\ta$ - or $\ta'$ - is not an
endpoint of the complementary to $B$ arc containing $0$), then $B$ is the
rotational orbit associated with a different diameter than $\ol{\ta\ta'}$.

The length of an arc $I$ is denoted below by $|I|$. Let $\ell_\ta$ be a periodic
critical leaf. By Theorem~\ref{lm-bs} it gives rise to the rotational  periodic orbit
$A(\ta)=A$ of some rational rotation number $\rho=\frac mk \in
\ucirc$ (we \emph{always} consider rational numbers in the
reduced form); moreover, $A(\ta)$ is then the unique
rotational periodic orbit with this rotation number. Below we introduce some objects depending on $A$,
however this dependence is omitted for the time being (later we reflect this
dependence in our notation). These objects can also be viewed as depending on $\ta$.

Let the components of $\ucirc \setminus A$ be $I_1, \dots, I_k$ with
$I_1<I_2<\dots<I_{k-1}<\frac 12<I_k$ (by Theorem~\ref{lm-bs} this is correct).
Then $\si^{k-1}|_{I_1}$ is a homeomorphism onto $I_k$. Following Milnor
\cite{mi}, set $I_1 = (2\al, 2\be)$ (then $2\al, 2\be \in A$) and
$\si^{-1}(\ol{I_1}) = D$. Then $D \subset \ol{I_k}$ is the disjoint union of
two arcs $I = [\al, \be]$ and $I' = [\al', \be']$, each of which maps by $\si$
onto $\ol{I_1}$; the restriction of $\si$ on either $I$ or $I'$ is an expanding
homeomorphism. Also, $\be, \al' \in A$ are $\si^k$-fixed. The map $\si^k$ maps
both $I$ and $I'$ onto $\ol{I_k}$ homeomorphically and expands the length by
the factor of $2^k$. So, $D$ generates a traveling horseshoe $D_\iy=D_\iy(A)$
of period $k$ called the \emph{canonic traveling horseshoe}, or just
\emph{horseshoe (associated to $A$)}. The corresponding \emph{canonic pruning}
was considered in \cite[Chapter 2]{BS}. A leaf $\ell_\ta$ is said to be
\emph{basic non-rotational} if $\{\ta, \ta'\}$ is disjoint from $D_\iy(A_\ta)$.

After the first application of $\si$ which maps $D_\iy$ into $[2\al, 2\be]=I_1$
the set $D_\iy$ is ``traveling'' in $\ucirc$ together with $I_1$ following
the pattern of $A$ until $\si^{k-1}$ maps $I_1$ onto $I_k$ and $D_\iy$ onto
itself. As above, let $Z(\ta)=Z$ be the orbit of $D_\iy$. We now prove Lemma~\ref{orbind}
which relates $Z$ and orbits having block structure over $A$. It allows us to see
if $P$ is contained in $Z$ from the pattern of $P$ alone.

\begin{lem}\label{orbind} A periodic orbit $P$ has block structure over $A$ iff
$P\subset Z$.
\end{lem}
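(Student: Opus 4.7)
The plan is to prove the two directions separately, using the correspondence between the rotational dynamics on $A$ and the $\si$-orbit of $D_\iy$.

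For the forward direction ($P\subset Z\Rightarrow$ block structure), set $P_i = P\cap \si^i(D_\iy)$ for $i=0,\dots,k-1$. Using $\si(D_\iy)\subset\ol{I_1}$ and that $\si|_{\ol{I_j}}$ is a homeomorphism onto $\ol{I_{j+m}}$ for $j\ne k$, an induction yields $\si^i(D_\iy)\subset\ol{I_{\tau(i)}}$ for a cyclic labeling $\tau$ realizing the rotation number $m/k$ with $\tau(0)=k$. Since the closed arcs $\ol{I_j}$ meet only at the points of $A$ (which are not in $P$), the blocks $P_i$ lie in pairwise disjoint subarcs $\hat{P}_i\subset\ol{I_{\tau(i)}}$, and $\si(P_i)=P_{i+1}$ respects the rotational pattern of $A$. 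The monotone degree-one circle map $\mu$ that collapses each $\hat{P}_i$ to $a_{\tau(i)}\in A$ and is homeomorphic on the complementary arcs then semiconjugates $\si|_P$ to $\si|_A$, establishing the block structure.

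For the reverse direction, let $\mu$ be the monotone semiconjugacy with $\mu(P)=A$, set $C_j=P\cap\mu^{-1}(a_j)$, and $\hat{C}_j=\overline{\mu^{-1}(a_j)}$. I would proceed in three steps. (i) Matching: after a relabeling, each block $C_j$ lies in the complementary arc $\ol{I_j}$ of $A$. Collapsing each $\hat{C}_j$ to a single point produces $k$ points on the quotient circle, cyclically permuted with rotation number $m/k$ by the naturally induced dynamics; by Theorem~\ref{lm-bs}, the unique rotational orbit of this rotation number is $A$, forcing the positional correspondence. (ii) Critical block in $D_\iy$: since $C_k\subset\ol{I_k}$ and $\si^k(C_k)=C_k$, the block $C_k$ lies in the maximal $\si^k$-invariant compact subset of $\ol{I_k}$. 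This set consists of $D_\iy$ together with certain exceptional $\si^k$-periodic orbits in $\ol{I_k}\setminus D$; each exceptional orbit, viewed under $\si$, has a cyclic combinatorial pattern whose rotation number (when rotational) differs from $m/k$, and so, by the uniqueness from Theorem~\ref{lm-bs}, cannot contribute to a block structure over $A$. Hence $C_k\subset D_\iy$. (iii) Iterating by $\si$, $C_{\tau(i)}=\si^i(C_k)\subset\si^i(D_\iy)$ for every $i$, so $P=\bigsqcup_j C_j\subset\bigcup_i\si^i(D_\iy)=Z$.

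The main obstacle is step (i), which transfers the point-uniqueness of the rotational orbit $A$ to the cyclic structure on the block-arcs $\hat{C}_j$. Care is needed in the critical arc $\ol{I_k}$, whose $\si$-image wraps the circle and where the left/right orientation of blocks can reverse when a block crosses the critical fiber; this prevents a naive endpoint-matching argument and forces the use of combinatorial rigidity rather than direct orbit tracking.
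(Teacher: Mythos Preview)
Your forward direction is fine (the paper dispatches it in one line). The reverse direction has a genuine gap in step~(i), and step~(ii) inherits trouble from it.

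In step~(i) you propose to collapse each block-arc $\hat{C}_j$ to a point, obtain $k$ points on a quotient circle ``cyclically permuted with rotation number $m/k$ by the naturally induced dynamics,'' and then invoke Theorem~\ref{lm-bs} to identify them with $A$. But $\si$ does \emph{not} descend to a map of the quotient circle: it is only defined on the finitely many images of the collapsed points, and Theorem~\ref{lm-bs} concerns $\si$-invariant subsets of the \emph{actual} circle $\ucirc$, not combinatorial rotations on a quotient. Knowing that the blocks have the same cyclic order and rotation combinatorics as $A$ does not pin down their \emph{positions}; nothing yet prevents a block from straddling a point of $A$ or sitting in the wrong complementary arc $I_j$. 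You flag this as ``the main obstacle,'' but the mechanism you propose does not close it.

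The paper's argument supplies exactly the missing anchor. It observes that for any span $\sn(H)=[\al,\be]$ not containing $0$, $\si$ maps $\sn(H)$ homeomorphically onto $\sn(\si(H))$ and doubles its length; hence exactly one span contains $0$. Label the spans $H_1,\dots,H_k$ so that $0\in H_k$ and $\si(H_j)=H_{j+1}$ for $j<k$; then $\tfrac12\in H_{k-1}$, and the successive $\si$-preimages $x_j\in H_j$ of $0$ are determined by their position relative to $\{0,\tfrac12,x_{k-2},\dots\}$ --- exactly the same data that determines the preimages $y_j\in I_j$, so $x_j=y_j$. This is what replaces your quotient argument: the orbit of $0$ threads through both the $H_j$ and the $I_j$ and forces them to interlock. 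A short $\si^k$-preimage-of-$0$ count then shows $H_1\subset I_1$ (if $H_1$ overhung into an adjacent $I_j$, the overhang would produce a second $\si^k$-preimage of $0$ in $H_1$), whence $H_k=\si^{k-1}(H_1)\subset I_k$ and $P\cap H_k\subset D_\iy$. Your step~(ii), with its unspecified ``exceptional orbits,'' becomes unnecessary once the containment $H_1\subset I_1$ is in hand.
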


\begin{proof} Clearly, if $P\subset Z$ then it has block structure over $A$. Suppose
now that $P$ has block structure over $A$. Then given a block $H$ there are well-defined
points $\al(H)=\al, \be(H)=\be\in H$ so that $H\subset [\al, \be]$. Let us call $[\al, \be]$
the \emph{span (of $H$)} ad denote it $\sn(H)$. By the definition spans of blocks are
disjoint, and in particular
$\si(\al)\nin [\al, \be], \si(\be)\nin [\al, \be]$. This easily implies that if $0\nin [\al, \be]$ then
$[\al, \be]$ and $[\si(\al), \si(\be)]=\si([\al, \be])$ are disjoint. Hence $\al(\si(H))=\si(\al),
\be(\si(H))=\si(\be)$. Thus, for all blocks whose spans do not contain $0$ the map $\si$ does not
change the relative order of points in the block and expands the length of the span twofold.
Thus, exactly one span contains $0=x_k$.

Denote the spans $H_1, \dots, H_k$ so that $0\in H_k$ and $\si(H_j)=H_{j+1}, 1\le j<k$.
Then $\frac 12=x_{k-1}\in H_{k-1}$. Recall, that $A$ divides $\ucirc$ into
arcs $I_1, \dots, I_k$ introduced above; these arcs are analogous to spans and are ordered
on the circle the same way. Then $\frac 12\in I_{k-1}$. Now, let us denote the further preimages
of $0$ inside $H_{k-2}, \dots, H_1$ by $x_{k-2}, \dots, x_1$. Let us also denote the further preimages
of $0$ inside $I_{k-2}, \dots, I_1$ by $y_{k-2}, \dots, y_1$. The points $\{x_1, \dots, x_{k-1}, x_k\}$ and the
points $\{y_1, \dots, y_{k-1}, y_k\}$ are ordered on the circle the same way which coincides with the
circular order of points in the rotational periodic orbit $A$.
Let us show that then $y_j=x_j, j=1, \dots, k-2$. Indeed, the point $x_{k-2}$ is located with respect to the points
$0, \frac 12$ exactly where the order of points dictates, the same applies to $y_{k-2}$, and since this is the same order
then $y_{k-2}=x_{k-2}$. The same argument shows that $y_j=x_j, j=1, \dots, k$.

Let us show that $H_1\subset I_1=[u, v]$. Clearly, $H_1$ covers $0$ for the first time when it maps
(1-to-1) by $\si^{k-1}$ onto $H_k$ and that $\frac 12\nin H_k$ (because $k>1$). So, there is
only one $\si^k$-preimage of $0$ in $H_1$ (coinciding with $\si^{k-1}$-preimage of $0$ in $H_1$).
Suppose that $H_1\not\subset I_1$. We may assume that there is an interval $I_j$ adjacent to $I_1$
(say, their common endpoint is $u$) such that $H_1\cap \inte(I_j)\ne \0$.
Clearly, $H_1$ cannot contain $I_j$
because otherwise there is an earlier than $\si^{k-1}(H_1)=H_k$ image of $H_1$ containing $0$, a contradiction.
Hence we may assume that $\al(H_1)=\al_1\in \inte(I_j)$. Since $\si^k(\al_1)$ must belong to $H_1$ we see that
$\si^k$-image of $[u, \al_1]$ stretches over $0$ and contains yet another, different from $x_1\in I_1$,
$\si^k$-preimage of $0$, a contradiction. Hence $H_1\subset I_1$. This implies that $\si^{k-1}(H_1)=H_k\subset I_k$,
so the points of $P\cap H_k$ belong to the set $D_\iy$ of points which map by $\si^k$ back to $H_k$
and $P\subset Z$ as desired.
\end{proof}

So, $P$ does not have block structure over $A$ iff $\ta\nin D_\iy$. Then
we call $\ell_\ta$ \emph{basic non-rotational}; Theorem~\ref{th-basnonrot} solves the Main Problem for
such leaves. Recall that $M(D)$ denotes the union of main holes of $D$.

\begin{thm}\label{th-basnonrot} Suppose that $D=I\cup I'$ generates a
traveling horseshoe $D_\iy$ of period $k$. Let $\ell_\ta$ be a
critical leaf such that $\ta, \ta'\nin D_\iy\cup M(D)$ (e.g.,
$\ell_\ta$ may be a basic non-rotational critical leaf, i.e. such that the
rotational set $A_\ta$ is a periodic orbit of period $k$ and $\ta, \ta'\nin D_\iy(\ta)$).
Then there exists a non-degenerate lamination $\sim$ with $\ta\sim \ta'$. In particular,
for a basic non-rotational critical leaf there is always a compatible non-degenerate
lamination.
\end{thm}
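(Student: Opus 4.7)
The plan is to show that the equivalence relation $\sim_{\lam^\ta_\iy}$ of Theorem~\ref{omega closed} is non-degenerate; since $\ell_\ta\in\lam^\ta_\iy$ this automatically gives $\ta\sim_{\lam^\ta_\iy}\ta'$, so such $\sim$ will serve as the required lamination.

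As the main tool I would use the canonical pruning $\ph:\ucirc\to\ucirc$ associated with the traveling horseshoe $D_\iy$. Recall that $\ph$ collapses each hole of $D_\iy$ to a point and monotonically semiconjugates $\si^k|_{D_\iy}$ (which acts as the one-sided $2$-shift on the Cantor set $D_\iy$) to $\si|_\ucirc$. The hypothesis $\ta,\ta'\nin D_\iy\cup M(D)$ places $\ta$ and $\ta'$ in secondary or premain holes of $D_\iy$, so $\ph(\ta),\ph(\ta')$ are dyadic angles distinct from $0,\tfrac12$. Moreover, because $D_\iy$ is by definition the set of points whose $\si^k$-orbit stays in the neighborhood $H$ of $D$, the forward $\si^k$-orbits of $\ta$ and $\ta'$ each eventually exit $H$.

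Next I would analyze the precritical leaves of $\lam^\ta=\cup_n\lam^\ta_n$. Using the ``exit $H$'' property above, I would classify each precritical leaf by the first moment its forward orbit meets $H$, and then transfer this information through $\ph$ to conclude that no precritical leaf can separate two points of $D_\iy$ whose $\ph$-images differ. Limit leaves inherit the same property by taking closures. Consequently, no $\om$-continuum in the sense of Definition~\ref{df-lageo} can join two distinct $\ph$-fibers, since such a continuum would have to absorb uncountably many intermediate points of the Cantor set $D_\iy$, contradicting its countable intersection with $\ucirc$. Because $D_\iy$ is uncountable and $\ph$ has uncountably many distinct values, this produces uncountably many $\sim_{\lam^\ta_\iy}$-classes, so $\sim_{\lam^\ta_\iy}$ is non-degenerate.

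The main obstacle is the combinatorial localization of precritical and limit leaves: one must verify carefully that iterated pullback of $\ell_\ta$ does not create a leaf slicing across $D_\iy$ and linking its branches under closure. The condition $\ta,\ta'\nin M(D)$ is used precisely here, to exclude $\ell_\ta$ (and hence its $\si$-images) from acting as such a transverse leaf at a main hole. For the ``in particular'' claim, taking $D=D_\iy(A_\ta)$ for a basic non-rotational $\ell_\ta$, the extra verification $\ta,\ta'\nin M(D)$ follows from Theorem~\ref{lm-bs}: since $A_\ta$ lies inside the open semicircle $(\ta,\ta')$, the spanning intervals $I,I'\subset\ol{I_k}$ also do, and the main holes $M(D)=(\be,\al')\cup(\be',\al)$ lie away from $\ta,\ta'$, so the general hypothesis of the theorem applies.
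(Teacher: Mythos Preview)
Your overall framework---work with $\sim_{\lam^\ta_\iy}$ and exploit the pruning $\ph$---matches the paper, and your treatment of the ``in particular'' clause is fine. But the core step is wrong. You claim that \emph{no precritical leaf of $\lam^\ta_\iy$ separates two points of $D_\iy$ whose $\ph$-images differ}. This is false: the paper actually \emph{constructs} exactly such precritical leaves. Given a $\si^t$-preimage leaf $\widehat{\ol{\al\be}}$ of $\hat\ell_{\ph(\ta)}$ in the $\ph$-circle, the sets $\ph^{-1}(\al)=W$ and $\ph^{-1}(\be)=V$ are holes of $D_\iy$, and there is a $\si^{tk}$-pullback of $\ell_\ta$ with one endpoint in $W$ and the other in $V$; this leaf plainly separates points of $D_\iy$ near $W$ from those near $V$, and these have distinct $\ph$-images. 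So the combinatorial localization you propose cannot hold, and the implication you draw from it (that an $\om$-continuum joining distinct $\ph$-fibers must absorb uncountably many points of $D_\iy$) is left without support.

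The paper's argument runs in the opposite direction. Since $\ta,\ta'$ lie in non-main holes, $\ph(\ta)$ and $\ph(\ta')$ are strict $\si$-preimages of $\tfrac12$, hence $\hat\ell_{\ph(\ta)}$ is a \emph{non-periodic} critical leaf on the $\ph$-circle. Theorem~\ref{thnonper} then gives a non-degenerate lamination $\approx$ there, and one checks $\lam_\approx=\widehat\lam^{\ph(\ta)}_\iy$. Each precritical leaf of $\widehat\lam^{\ph(\ta)}_\iy$ lifts, via the holes of $D_\iy$ and the traveling structure, to a genuine precritical leaf of $\lam^\ta_\iy$; the crucial verification is that all forward $\si$-images of the lift stay unlinked with $\ell_\ta$, which uses that the convex hulls of $D_\iy,\si(D_\iy),\dots,\si^{k-1}(D_\iy)$ are pairwise disjoint. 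Passing to limits yields uncountably many pairwise disjoint leaves of $\lam^\ta_\iy$, and then Lemma~\ref{contain} forces any continuum in $\Lstar$ crossing them all to meet $\ucirc$ uncountably often, so $\sim_{\lam^\ta_\iy}$ is non-degenerate. The key idea you are missing is this reduction to the already-proved non-periodic case on the pruned circle, followed by a careful lift back.
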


\begin{proof} The remark in parentheses in the statement of the theorem
is justified by the explanations before the theorem where we show that
in the basic non-rotational case $D=I\cup I'$ generates a
traveling horseshoe $D_\iy$ of period $k$. Also, it follows from the definition
of $A_\ta$ that in that case $\ta, \ta'\nin M(D)$, and hence $\ta, \ta'\nin D_\iy\cup M(D)$.

As a non-degenerate lamination $\sim$ with $\ta\sim \ta'$ we choose the
lamination constructed as follows: (1) we construct the geometric lamination
$\lam^\ta_\iy$ as in Section~\ref{geola}; (2) then we construct the lamination
$\sim=\sim_{\lam^\ta_\iy}$ as in Theorem~\ref{omega closed} and show that
$\sim$ is not degenerate. We use the notation from above, in particular $\ph$
is the pruning by $D$; also, we use notation like
$\hat{\ell}, \widehat{C}$ etc for leaves and classes in the $\ph$-circle.

Clearly, $\ta\in U$ where $U$ is a non-main hole in $D_\iy$, and $\ta'\in U'$.
For some $q\ge 0$ both $U$ and $U'$ map by $\si^{kq}$ onto two
premain holes and then by $\si^k$ onto the main hole with non-periodic endpoints.
Recall that by the construction this main hole maps by $\ph$ to
$\frac 12$. Hence, both points $\ph(\ta)$ and $\ph(\ta')=(\ph(\ta))'$ are
\emph{$\si$-preimages (under some power)} of $\frac 12$ and by
Theorem~\ref{thnonper} the geometric lamination $\widehat{\lam}^{\ph(\ta)}_\iy$
generates a non-degenerate lamination $\approx$ in the $\ph$-circle. By
Theorem~\ref{TH RotnlCase} the $\approx$-class $\widehat{B}$ of $0$ is $\{0\}$,
thus $\widehat{C}=\{\ph(\ta), \ph(\ta')\}$ is an $\approx$-class, and hence by
Lemma~\ref{REM *} the class $\widehat{C}$ is the unique critical
$\approx$-class. Moreover, all leaves of the geometric lamination
$\lam_\approx$ associated with $\approx$ are approximated from at least one
side by pullbacks of $\hat{\ell}_{\ph(\ta)}$. Indeed, $J_\sim$ is a dendrite
by Lemma~\ref{TH PerPropts}. Then by Theorem~\ref{TH AlphaFixPt} precritical
points are condense in $J_\approx$ which, in the language of laminations
translates exactly into the above statement (observe that since $\approx$ is a
lamination no two gaps of $\lam_\approx$ can meet over a leaf). Together
with the construction of the pullback lamination $\widehat{\lam}^{\ph(\ta)}_\iy$
this implies that in fact $\lam_\approx=\widehat{\lam}^{\ph(\ta)}_\iy$.

Consider in the $\ph$-circle a leaf $\widehat{\ol{\al\be}}$ which is a $\si^t$-preimage
leaf of $\hat{\ell}_{\ph(\ta)}$. The fundamental construction of the pullback
lamination (see Section~\ref{geola}) shows that the forward images of
$\widehat{\ol{\al, \be}}$ are disjoint from $\hat{\ell}_{\ph(\ta)}$ except for
the last one which coincides with $\hat{\ell}_{\ph(\ta)}$. Then
$\ph^{-1}(\al)=W$ and $\ph^{-1}(\be)=V$ are two holes in $D_\iy$ and the convex
hulls $\ch(W, V),$ $\ch(\si^k(W), \si^k(V)),$ $\dots, \ch(\si^{(t-1)k}(W),
\si^{(t-1)k}(V))$ are disjoint from $\ch(U, U')$ (because these convex hulls
are $\ph$-preimages of pairwise disjoint $\approx$-leaves). Hence we can define
the $\si^{tk}$-pullback $\ell$ of $\ell_\ta$ whose endpoints belong to $W$ and
$V$, and whose $\si^k$-images have the endpoints in the appropriate
$\si^k$-images of $W$ and $V$ respectively (and hence are disjoint from
$\ell_\ta$) until finally $\si^{tk}(\ell)=\ell_\ta$.

To show that other $\si$-images of $\ell$ are disjoint from
$\ell_\ta$ observe that by the construction the convex hulls of
$D_\iy$ and its images contain $\ell$ and its images. On the other hand, by the
definition the convex hull of $D_\iy$ and the convex hulls of $\si$-images of
$D_\iy$ are contained in $I_1, \dots, I_k$ and hence are pairwise disjoint
(except for the boundaries). We conclude that the forward orbit of $\ell$ is
disjoint from $\ell_\ta$ except for the last image $\si^{tk}(\ell)=\ell_\ta$.
Hence all such leaves $\ell$ belong to $\lam^\ta_\iy$ (see
Section~\ref{geola}). Thus for all preimage leaves of finite pullback
laminations $\widehat{\lam}^{\ph(\ta)}_n$ there are corresponding finite
preimage leaves of $\ell_\ta$ in $\lam^\ta_\iy$ ordered in the disk the same way. Since
$\lam_\approx=\widehat{\lam}^{\ph(\ta)}_\iy$ it implies that to all leaves of
$\widehat{\lam}^{\ph(\ta)}_\iy$ there are associated leaves of $\lam^\ta_\iy$.
We conclude that there are uncountably many pairwise disjoint leaves of
$\lam^\ta_\iy$; denote the collection of these leaves by $\A$.

Now it follows from Theorem~\ref{omega closed} similarly to the proof of
Theorem~\ref{thnonper} that the lamination $\sim_{\lam^\ta_\iy}$ is
non-degenerate. Indeed, set $\lam=\lam^\ta_\iy$ and let $\Lstar$ be the union
of all leaves of $\lam$. Choose two points $z, \zeta\in \ucirc$ so that an
uncountable family of leaves of $\A$ separates them. If a continuum $K\subset
\Lstar$ contains $z, \zeta$ then it has to cross every leaf of $\A$, hence by
Lemma~\ref{contain} it has to cross $\ucirc$ over an uncountable set of points
and cannot be an $\om$-continuum. Thus, $z\not\sim_\lam \zeta$ and hence
$\sim_\lam$ is not degenerate as desired.
\end{proof}

\subsection{Renormalization}\label{renorm}

The case not yet covered by the two basic cases is that when for a
periodic critical leaf $\ell_\ta$ we have $\{\ta, \ta'\}\subset
D_\iy(A_\ta)\setminus A_\ta=D_\iy\setminus A$ (we assume for definiteness that
$0<\ta<\frac 12$ and $A$ is of period $k$). To consider this case we first
assume that a non-degenerate lamination $\sim$ compatible with $\ell_\ta$
exists and draw appropriate conclusions which are necessary conditions on
$\ta$ for the existence of a lamination compatible with $\ell_\ta$.

The first step here reflects the construction of rotational renormalization on
dendrites from the second half of Section~\ref{per}. To make this step we only
need to assume that $\{\ta, \ta'\}\subset D_\iy$ without assuming the
periodicity of $\ell_\ta$; for simplicity we also assume that $\ta$ is not
mapped into $A$ by powers of $\si$ (this holds if $\ta$ is periodic but not
basic rotational). By Theorem~\ref{TH RotnlCase} we see that if $\{\ta,
\ta'\}\subset D_\iy$ then we can consider the rotational renormalization $F_1$
of the induced map $f=f_\sim$ defined on the dendrite $R_\iy$ (see
Lemma~\ref{le-Rinf}). Then the angles corresponding to the points of $R_\iy$
are exactly the angles of the set $D_\iy$. Say that two angles $\al, \be\in
\ucirc=\ph(D_\iy)$ are \emph{$\sim_1$-equivalent} if there are elements of
$\ph^{-1}(\al), \ph^{-1}(\be)$ which are $\sim$-equivalent where $\ph$ is the
appropriate canonic pruning.

\begin{lem}\label{le-lamrenorm} The relation $\sim_1$ is an invariant lamination
such that $f_{\sim_1}:J_{\sim_1}\to J_{\sim_1}$ and $F_1:R_\iy\to R_\iy$ are
conjugate. Moreover, the critical leaf $\ph(\ell_\ta)$ is compatible with $\sim_1$.
\end{lem}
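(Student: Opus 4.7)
The entire proof hinges on the following technical claim:

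\textbf{(K)}: If $x,y\in D_\iy$ and $\ph(x)=\ph(y)$, then $x\sim y$.

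The map $\ph$ is injective on $D_\iy$ apart from collapsing the two endpoints of each closed hole of $D_\iy$ to a single point, so \textbf{(K)} reduces to showing that the endpoint pair of every such hole is $\sim$-equivalent. For the main hole $[\be,\al']$, both endpoints lie in $A=A_\ta$, a single $\sim$-class by Theorem~\ref{TH RotnlCase}(3). Since $\{\ta,\ta'\}\subset D_\iy\setminus A$ forces the critical class of $\sim$ to differ from $A$, Lemma~\ref{REM *} says $A$ is non-critical and $A+\frac{1}{2}$ is also a single $\sim$-class, which contains both $\al$ and $\be'$ (noting $\al=\al'-\frac{1}{2}$ and $\be'=\be+\frac{1}{2}$, with $\al',\be\in A$); these are the endpoints of the other main hole $[\be',\al]$. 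For a non-main hole $[p,q]$ I induct on the least $j$ with $\si^{jk}([p,q])$ a main hole: by induction $\si^k(p)\sim\si^k(q)$; by Lemma~\ref{REM *} the $\si$-preimages of this (non-critical) $\sim$-class split into two classes lying in opposite open semicircles; and the structure of the traveling horseshoe guarantees that the two endpoints of any closed hole of $D_\iy$ always lie in a common branch relative to each iterated $\si$-preimage of $\ell_\ta$, which forces $p$ and $q$ into the same preimage class.

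Granted \textbf{(K)}, the relation $\sim_1$ is immediately an equivalence: a chain $x\sim y$, $y'\sim z$ with $\ph(y)=\ph(y')$ gives $y\sim y'$ by \textbf{(K)} (noting that only preimages inside $D_\iy$ are relevant to the renormalization), hence $x\sim z$, so $\sim_1$ is transitive. Classes of $\sim_1$ are closed by lifting convergent sequences through $\ph$ into $D_\iy$ and invoking closedness of $\sim$-classes plus compactness. Pairwise unlinking and nowhere-density of $\sim_1$-classes transfer from $\sim$ because $\ph$ is an orientation-preserving monotone surjection. Invariance of $\sim_1$ under $\si$ comes from the semiconjugacy $\si\circ\ph=\ph\circ\si^k$ on $D_\iy$: lift a $\sim_1$-class $C$ to the $\sim$-class $\widetilde C=\ph^{-1}(C)\cap D_\iy$ (single by \textbf{(K)}), apply $\si^k$ to obtain another $\sim$-class inside $D_\iy$, and push it back down through $\ph$ to conclude that $\si(C)$ is a $\sim_1$-class.

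For the conjugacy, define $h\colon J_{\sim_1}\to R_\iy$ by sending a $\sim_1$-class to the point of $R_\iy$ corresponding (via the $\sim$-quotient $p_\sim$) to its $\ph$-lift $\ph^{-1}(\cdot)\cap D_\iy$; well-definedness comes from \textbf{(K)}, and continuity and bijectivity are routine, so $h$ is a homeomorphism between compact Hausdorff spaces. The relation $h\circ f_{\sim_1}=F_1\circ h$ follows because $F_1=f_\sim^m|_{R_\iy}$ dynamically matches $\si^k|_{D_\iy}$, which is $\ph$-semiconjugate to $\si$. Finally, since $\ta\sim\ta'$ and $\ta,\ta'\in D_\iy$, we obtain $\ph(\ta)\sim_1\ph(\ta')$; and the antipodal symmetry $I'=I+\frac{1}{2}$ built into the construction of $D_\iy$ and $\ph$ ensures $\ph(\ta')=\ph(\ta)+\frac{1}{2}$, so $\ph(\ell_\ta)$ is indeed a critical diameter of the $\ph$-circle and is compatible with $\sim_1$.

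The main obstacle is the inductive step in \textbf{(K)}: the combinatorial fact that the two endpoints of any closed hole of $D_\iy$ lie in a common branch at every stage of backward iteration (so that the two-class $\si$-preimage split of Lemma~\ref{REM *} keeps them together) demands a careful unpacking of the traveling-horseshoe structure and its $\frac{1}{2}$-symmetry. Once \textbf{(K)} is established, the lamination axioms, the $\si$-invariance, the conjugacy with $F_1$, and the compatibility with $\ph(\ell_\ta)$ all follow through routine transfer of structure through $\ph$.
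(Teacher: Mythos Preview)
Your overall strategy matches the paper's: the heart of the argument is indeed your claim \textbf{(K)} that hole-endpoints of $D_\iy$ are $\sim$-equivalent, and everything else is transferred through $\ph$. Two points deserve comment.

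First, your inductive proof of \textbf{(K)} via Lemma~\ref{REM *} works but is needlessly laborious. The paper dispatches \textbf{(K)} in one line using Theorem~\ref{TH RotnlCase}(1): for a hole $(u,v)$ in $D_\iy$, every $\si$-iterate $\si^j(u),\si^j(v)$ is either a pair of hole-endpoints of $D_\iy$ (when $j\equiv 0\bmod k$) or lies in a single traveling image $\si^j(D_\iy)$ whose convex hull is disjoint from $\ell_\ta$ (when $j\not\equiv 0\bmod k$); in either case the chord $\ol{\si^j(u)\si^j(v)}$ does not cross $\ell_\ta$, so $u\sim v$ directly. Your induction unpacks this same geometric fact one $\si$-step at a time.

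Second, and more substantively, your transitivity argument has a gap. The definition of $\sim_1$ allows the witnesses $y,y'\in\ph^{-1}(\be)$ to lie anywhere in a closed hole $[p,q]$, not just at the endpoints $p,q\in D_\iy$. Your parenthetical ``only preimages inside $D_\iy$ are relevant'' asserts precisely what needs proof. The paper fills this by showing more than \textbf{(K)}: the $\sim$-class of $\{p,q\}$ is a full preimage of $A'$ sitting inside $[p,q]$, and by unlinking any other $\sim$-class meeting $(p,q)$ is trapped entirely inside $(p,q)$. Consequently, if $x\sim y$ with $\ph(x)\ne\ph(y)$ and $y\in[p,q]$, then $y$ must lie in the class of $\{p,q\}$, so one may replace $y$ by $p$ (or $q$) without loss. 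This structural fact is what legitimizes the restriction to $D_\iy$, and it also gives for free that each dyadic point $\ph(p)=\ph(q)$ is a singleton $\sim_1$-class, a fact you need for nowhere-density but do not derive.
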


\begin{proof} We use the notation introduced when we defined the canonic pruning.
Thus, the smallest complementary to $A$ arc is $I_1=(2\al, 2\be)$; we consider
two arcs $D^{-} = [\al, \be]$ and $D^{+} = [\al', \be']$. Each of $D^{-}$ and
$D^{+}$ homeomorphically maps by $\si$ onto $\ol{I_1}$, and then eventually by
$\si^k$ onto $[\al', \be]$ (which gives rise to the set $D_\iy$). Since by
Lemma~\ref{TH RotnlCase} $A$ is a $\sim$-class then $A'$ is a $\sim$-class too.

Let us now show that the endpoints $u, v$ of a hole $(u, v)$ in $D_\iy$ are
$\sim$-equivalent. Since the points $\si^k(u), \si^k(v)$ are the endpoints of
various holes in $D_\iy$ then the chord $\ol{\si^k(u)\si^k(v)}$ never crosses
$\ell_\ta$ inside $\disk$. Thus, by Theorem~\ref{TH RotnlCase} $u\sim v$.
Moreover, a main hole with non-periodic endpoints $(\be', \al)$ is a
homeomorphic image of $(u, v)$. Hence by the properties of laminations the
$\sim$-class of $\{u, v\}$ is the appropriate preimage of $A'$ in $(u, v)$;
only points $u, v$ in this $\sim$-class belong to $D_\iy$. This implies that if
$x\in (u, v)$ does not belong to the $\sim$-class of $\{u, v\}$ then it cannot
belong to a $\sim$-class of a point of $D_\iy$ because otherwise two leaves of
the associated lamination $\lam_\sim$ would cross inside $\disk$. Hence if
$y\in D_\iy$ is not an endpoint of a hole in $D_\iy$ then its $\sim$-class $Y$
is contained in $D_\iy$ completely and consists of points which are not
endpoints of holes in $D_\iy$. Thus $\ph|_Y$ is 1-to-1 which implies that
$\ph(Y)$ is a $\sim_1$-class. Also, if $(u, v)$ is a hole in $D_\iy$ then by
the above $\ph(u)=\ph(v)$ is a $\sim_1$-class. Finally, by the construction the
critical leaf $\ph(\ell_\ta)$ is compatible with $\sim_1$. It follows from the
definitions of both $\sim_1$ and $F_1$ that $f_{\sim_1}:J_{\sim_1}\to
J_{\sim_1}$ and $F_1:R_\iy\to R_\iy$ are conjugate.
\end{proof}

The lamination $\sim_1$ with the critical leaf $\ph(\ell_\ta)$ is called the
\emph{rotational renormalization (of generation $1$)} of $\sim$ which is
defined by a periodic critical leaf $\ell_\ta$. We consider $\sim_1$
analogously to $\sim$ and depending on its dynamics introduce \emph{rotational
renormalizations} of $\sim$ of higher generations denoted by $\sim_2, \sim_3,
\dots$. The process of renormalization applies to the orbit $Q$ of $\{\ta,
\ta'\}$ and to the critical leaf $\ell_\ta$ yields a sequence of their
renormalizations, periodic orbits $Q_1, Q_2, \dots$ and critical leaves
$\ell_1=\ph(\ell_\ta), \ell_2, \dots$. The process stops in two cases. First,
when the renormalization $Q_k$ of $Q$ is basic non-rotational. In this
case we call $\ell_\ta$ (or $Q$) a \emph{critical leaf (or orbit) of rotational
depth $k$}. Second, $Q_k$ can be such that the corresponding critical leaf
$\ell_k$ is basic rotational. Then we say that $\ell_\ta$ \emph{generates} its orbit
$Q$ called a \emph{laminational snowflake of depth $k$}. The connection between
$\si$ and the induced map of the quotient space $\ucirc$ implies that the
rotational renormalizations of the periodic orbits on the circle correspond to
the rotational renormalizations of the orbits of the corresponding periodic
points of the induced map and allows us to prove Theorem~\ref{le-rotsnow}.

\begin{thm}\label{le-rotsnow} Let $\ta \in [0, \frac{1}{2})$ and
$\ell_\ta$ generates a laminational snowflake of some depth. Then a
non-degenerate lamination $\sim$ with $\ta\sim \ta'$ does not exist.
\end{thm}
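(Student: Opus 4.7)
The strategy is proof by contradiction via iterated rotational renormalization of laminations, terminating by an appeal to Theorem~\ref{TH RotnlCase}. Suppose, toward contradiction, that a non-degenerate invariant lamination $\sim$ with $\ta\sim\ta'$ exists. Since by hypothesis $\ell_\ta$ generates a laminational snowflake of depth $k$, the sequence of canonical prunings produces critical leaves $\ell_1,\ldots,\ell_k$ in the successive $\ph$-circles such that none of $\ell_1,\ldots,\ell_{k-1}$ is basic non-rotational (so the renormalization process continues) and $\ell_k$ is basic rotational. I will parallel this sequence on the lamination side to produce a chain $\sim_1,\ldots,\sim_k$ of non-degenerate invariant laminations with $\sim_i$ compatible with $\ell_i$.

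For the first step, apply Lemma~\ref{le-lamrenorm} to obtain the rotational renormalization $\sim_1$, which is an invariant lamination compatible with $\ell_1=\ph(\ell_\ta)$ whose induced map $f_{\sim_1}\colon J_{\sim_1}\to J_{\sim_1}$ is conjugate to the dendrite-level rotational renormalization $F_1\colon R_\iy\to R_\iy$ furnished by Lemma~\ref{le-Rinf}. Non-degeneracy of $\sim_1$ is then free: $R_\iy$ contains the non-degenerate arc $[a,a']$, where $a$ is the unique fixed cutpoint of $f_\sim$ provided by Theorem~\ref{TH AlphaFixPt}, so $J_{\sim_1}\cong R_\iy$ is not a point. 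Iterating this construction yields the desired chain; at each stage, the hypothesis that $\ell_i$ is not basic non-rotational guarantees $\{\ell_i$-endpoints$\}\subset D_\iy(A_{\ell_i})$, so the canonical pruning from Lemma~\ref{le-lamrenorm} is available again, and the same dendrite argument (applied now to $f_{\sim_i}\in\tp$ on $J_{\sim_i}$, using Lemma~\ref{TH PerPropts} to see $J_{\sim_i}$ is a dendrite) propagates non-degeneracy to $\sim_{i+1}$.

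At the terminal stage, $\sim_k$ is a non-degenerate invariant lamination whose critical class contains both endpoints of the basic rotational leaf $\ell_k$. This is precisely the situation ruled out by Theorem~\ref{TH RotnlCase}, which gives the contradiction and completes the proof.

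The main obstacle is the inductive bookkeeping: verifying that at each step $i$ the renormalized critical leaf $\ell_i$ is periodic in the new $\ph_i$-circle (so that Lemma~\ref{le-lamrenorm} and Lemma~\ref{TH PerPropts} both continue to apply), and that the conjugacy of $f_{\sim_i}$ with $F_i$ faithfully transports the dendrite structure (in particular the existence of a fixed cutpoint $a$ with $[a,a']$ non-degenerate) through all $k$ levels of renormalization. Once this induction is set up, the contradiction follows immediately from Theorem~\ref{TH RotnlCase}.
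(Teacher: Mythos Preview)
Your proposal is correct and follows essentially the same approach as the paper: assume a non-degenerate $\sim$ exists, iterate Lemma~\ref{le-lamrenorm} along the snowflake's renormalization sequence to produce non-degenerate laminations $\sim_1,\dots,\sim_k$ compatible with $\ell_1,\dots,\ell_k$, and then invoke Theorem~\ref{TH RotnlCase} at the terminal basic rotational leaf $\ell_k$ for a contradiction. Your added justification of non-degeneracy via the conjugacy $J_{\sim_i}\cong R_\iy\supset[a,a']$ simply spells out what the paper leaves implicit.
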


\begin{proof} Suppose otherwise. Then by Lemma~\ref{le-lamrenorm} we can define
the lamination $\sim_1$, the rotational renormalization of $\sim$ of generation
$1$ as well as the rotational renormalization $F_1$ of the induced map
$f=f_\sim$ defined on the dendrite $R_\iy$ (see Lemma~\ref{le-Rinf}). Moreover,
the critical leaf $\ph(\ell_\ta)$ is compatible with $\sim_1$. Clearly,
$\ph(\ell_\ta)$ is a periodic critical leaf but of less period. Then we will
define the rotational renormalization of $\sim$, now of generation $2$, etc. On
all these steps laminations $\sim_1, \sim_2, \dots$ will not be degenerate and
will correspond to non-degenerate quotient spaces with non-degenerate induced
maps. However, the process of defining the rotational renormalizations of
$\sim$ of higher generations has to stop because the critical leaf $\ell_\ta$
is periodic. By the definition of a critical leaf which generates a
laminational snowflake of some depth, it can only stop when on the next
step the  periodic critical leaf of the next rotational
renormalization of $\sim$ is basic rotational which is impossible by Theorem~\ref{TH
RotnlCase}.
\end{proof}

To consider the remaining case we prove the following theorem.

\begin{thm}\label{th-nonrot} Let $\ell_\ta$ be a periodic rotational critical
leaf of rotational depth $m$. Then there exists a traveling horseshoe which together
with $\ta, \ta'$ satisfies the conditions of Theorem~\ref{th-basnonrot}. In
particular, there exists a non-degenerate lamination $\sim$ with $\ta\sim
\ta'$.
\end{thm}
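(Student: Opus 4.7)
The plan is to prove the theorem by induction on the rotational depth $m\ge 0$, constructing a traveling horseshoe at the base level by iteratively pulling back through the canonic prunings. The base case $m=0$ is immediate: by definition $Q_0=Q$ is already basic non-rotational, so the canonic traveling horseshoe $D_\iy(A_\ta)$ is the desired one. Indeed $\{\ta,\ta'\}\cap D_\iy(A_\ta)=\0$ by assumption, and since $A_\ta\subset [\ta,\ta']$ (by Theorem~\ref{lm-bs}) neither $\ta$ nor $\ta'$ can sit in a main hole $M(D)$, so Theorem~\ref{th-basnonrot} applies directly.

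For the inductive step, suppose the statement holds for depth $m-1$. Let $\ell_\ta$ have rotational depth $m\ge 1$, with $A=A_\ta$ of period $k$, associated canonic horseshoe $D_\iy^{(0)}$ generated by $I\cup I'$, and canonic pruning $\ph$. Since $\{\ta,\ta'\}\subset D_\iy^{(0)}\setminus A$ (we are in the case not covered by the two basic cases), $\ph(\ta),\ph(\ta')$ are well-defined angles in the $\ph$-circle, and $\ell_1=\ph(\ell_\ta)$ is a periodic critical leaf of rotational depth $m-1$ for the dynamics $\si|_{\ucirc}$ on the $\ph$-circle (which is conjugate to $\si^k|_{D_\iy^{(0)}}$). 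By the inductive hypothesis there is a traveling horseshoe $D_\iy^{(1)}$ in the $\ph$-circle, of some period $k'$, generated by a pair $J\cup J'$, such that $\ph(\ta),\ph(\ta')\nin D_\iy^{(1)}\cup M(J\cup J')$.

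Now I would form the pullback. Define $\wt I$ to be the component of $\ph^{-1}(J)\cap I$ that is the image of $J$ under the inverse branch of $\ph|_{I}$ (which is a semiconjugacy of $\si^k|_{D_\iy^{(0)}}$ to $\si$), and likewise $\wt I'\subset I'$. Set $\wt D=\wt I\cup\wt I'$; I claim $\wt D$ generates a traveling horseshoe $\wt D_\iy$ of period $kk'$ at level $0$, and $\{\ta,\ta'\}\cap(\wt D_\iy\cup M(\wt D))=\0$. The traveling property is verified in two layers: for $1\le j<k$ the set $\si^j(\wt D)$ lies in the convex hull of $\si^j(I\cup I')$, and the horseshoe $D^{(0)}_\iy$ being traveling makes these convex hulls pairwise disjoint from $\ch(\wt D)$; for multiples $jk$ with $1\le j<k'$ one uses that $D_\iy^{(1)}$ is traveling in the $\ph$-circle and that $\ph$ preserves the circular order on $D_\iy^{(0)}$ restricted to a side. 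The avoidance of $\wt D_\iy\cup M(\wt D)$ by $\ta,\ta'$ reduces, via the semiconjugacy $\ph$, to the inductive fact $\ph(\ta),\ph(\ta')\nin D_\iy^{(1)}\cup M(J\cup J')$, together with the observation that pulling back a main hole by $\ph|_{D^{(0)}}$ gives the main hole of $\wt D$ up to adding a union of premain and secondary holes of $D_\iy^{(0)}$, none of which contains $\ta$ or $\ta'$ by our standing assumption $\{\ta,\ta'\}\subset D_\iy^{(0)}$. Once $\wt D$ and $\wt D_\iy$ are in hand, Theorem~\ref{th-basnonrot} immediately gives a non-degenerate lamination $\sim$ with $\ta\sim\ta'$.

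The main obstacle will be justifying rigorously that the two-layer traveling property holds, i.e. that the $kk'$ forward $\si$-iterates of $\wt D$ really have pairwise disjoint convex hulls. The subtle point is the interaction between the expanding $\si^k$-dynamics inside the arcs $I,I'$ (where $\ph$ acts as a factor) and the ``outer'' cyclic permutation of $I_1,\ldots,I_k$ under $\si$: one must know that the subtler combinatorics of $D_\iy^{(1)}$ inside the $\ph$-circle does not create crossings once lifted into the small pieces $\wt I,\wt I'$. This ultimately rests on the fact that $\ph$ is at most $2$-to-$1$ and monotone on each half of $D_\iy^{(0)}$, so nested traveling structure in the target lifts to nested traveling structure in the source without new interlocking, completing the induction.
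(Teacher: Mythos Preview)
Your approach is essentially the same as the paper's: induction on the rotational depth, pulling back the inductively-obtained traveling horseshoe through the canonic pruning $\ph$, then verifying the two-layer traveling property and applying Theorem~\ref{th-basnonrot}. One simplification the paper makes that you should adopt: since $\ph$ is monotone and $J,J'$ avoid $0$ and $\tfrac12$, the full preimages $\ph^{-1}(J)$ and $\ph^{-1}(J')$ are already single arcs (there is no need to intersect with $I$ or speak of an ``inverse branch'' of $\ph|_I$, which is not injective); moreover, because $\ph$ fails to be one-to-one only over preimages of $0$ and $0\notin Q_\iy$, the restriction $\ph:\wt D_\iy\to Q_\iy$ is a genuine conjugacy, which cleanly transfers the traveling structure and the avoidance condition.
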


\begin{proof} We
consider the renormalizations of $\ell_\ta$ in a step by step fashion. They all
will be rotational, until the $m$-th renormalization which will be basic
non-rotational. We establish the existence of the desired traveling horseshoe
using induction by $m$. If $m=1$ (that is if $\ell_\ta$ is basic
non-rotational) then everything follows from Theorem~\ref{th-basnonrot}.
Suppose that the claim is proven for $m$ and prove it for $m+1$. If $\ell_\ta$
is rotational of depth $m+1$ then we consider the rotational set $A_\ta$
of period $k$. We see that $A_\ta\cap \ell_\ta=\0$ but $\{\ta, \ta'\}\subset
D_\iy(\ta)$ where $D_\iy(\ta)$ is the canonic traveling horseshoe associated to $A_\ta$
(and generated by $D$ where $D$ is the union of two appropriate intervals). The
critical leaf $\ell_{\ph(\ta)}$ is rotational of depth $m$, hence by
induction there exist two intervals $J, J'\subset \ucirc$ whose union $Q$
generates a traveling horseshoe $Q_\iy$ of period $l$ satisfying (together with
$\ell_{\ph(\ta)}$) conditions of Theorem~\ref{th-basnonrot}. Consider the
intervals $I=\ph^{-1}(J)$ and $I'=\ph^{-1}(J')$ and their union $\hDi=I\cup I'$.

Observe that $\ph$ is not one-to-one only on preimages of $0$. Hence $\ph$ is
one-to-one on preimages of the endpoints of $J, J'$. We may assume that
$I=[\al, \be]$ and $I'=[\al', \be']$, and it follows that $0, \frac 12\nin \hDi$.
Since $Q_\iy$ is of period $l$ and $A$ is of period $k$ then the appropriate
endpoints of $I, I'$ are of $\si$-period $kl$. Then $I, I'$ generate a general
horseshoe $\hDi_\iy$ and we want to prove that $\hDi_\iy$ together with
$\ell_\ta$ satisfy the conditions of Theorem~\ref{th-basnonrot}. First recall
that the union of 4 intervals $H(Q)$ constructed in the definition of a general
horseshoe does not contain $0$ or $\frac 12$. Hence every point of $H(\hDi)$
comes back into $D$ under $\si^k, \si^{2k}, \dots, \si^{lk}$ which implies that
$\hDi_\iy\subset D_\iy$. Moreover, $\ph:\hDi_\iy\to Q_\iy$ is a conjugacy since
$\ph$ only collapses holes of $D_\iy$ which eventually map onto $\ph$-preimage
of $0$ while on the other hand $0\nin Q_\iy$.

Since $Q_\iy$ is a traveling horseshoe of period $l$ then the convex hulls of
sets $\si(Q_\iy), \dots, \si^{l-1}(Q)$ are disjoint (except possibly for the
boundaries) from the convex hull of $Q_\iy$. The same holds for the convex
hulls of the sets $\hDi_\iy, \si^k(\hDi_\iy), \dots, \si^{k(l-1)}(\hDi_\iy)$. We
need to show that actually the convex hulls of sets $\si(\hDi_\iy), \dots,
\si^{kl-1}(\hDi_\iy)$ are disjoint from the convex hull of the set $\hDi_\iy$.
However it easily follows from the fact that $A$ is rotational and the
appropriate description of the dynamics on arcs complementary to $A$. Finally,
since $Q_\iy$ is a traveling horseshoe satisfying (together with the critical
leaf $\ell_{\ph(\ta)}$) conditions of Theorem~\ref{th-basnonrot} then the
properties of $\ph$ imply that so does the traveling horseshoe $\hDi_\iy$ and
the critical leaf $\ell_\ta$. By Theorem~\ref{th-basnonrot} we conclude that
there exists a lamination $\sim$ compatible with $\ell_\ta$.
\end{proof}

\bibliographystyle{plain}
\bibliography{c:/lex/references/refshort}

\end{document}